\newcommand{\email}[1]{\href{mailto:#1}{#1}}
\newtheorem{theorem}{Theorem}
\newtheorem{proposition}[theorem]{Proposition}
\newtheorem{lemma}[theorem]{Lemma}
\newtheorem{corollary}[theorem]{Corollary}
\theoremstyle{remark}
\newtheorem{remark}[theorem]{Remark}
\theoremstyle{definition}
\newtheorem{example}[theorem]{Example}
\newcommand{\ul}[1]{\underline{{#1}}}
\newcommand{\Real}{\mathbb{R}}
\newcommand{\Natural}{\mathbb{N}}
\newcommand{\st}{\,:\,}
\DeclareMathOperator{\tr}{tr}
\DeclareMathOperator{\card}{card}
\DeclareMathOperator{\SPAN}{span}
\DeclareMathOperator{\Ker}{Ker}
\DeclareMathOperator{\Image}{Im}
\newcommand{\DIFF}{\mathrm{d}}
\newcommand{\KOSZUL}{\kappa}
\newcommand{\trimmed}{{-}}
\newcommand{\PL}[2]{\mathcal{P}_{#1}\Lambda^{#2}}
\newcommand{\PLtrim}[2]{\mathcal{P}_{#1}^{\trimmed}\Lambda^{#2}}
\newcommand{\ltproj}[2]{\pi^{\trimmed,#1}_{r,#2}}
\newcommand{\Mh}{\mathcal{M}_h}
\newcommand{\Ih}{\mathcal{S}_h}
\newcommand{\FM}[1]{\Delta_{#1}}
\newcommand{\pf}{{\partial f}}
\newcommand{\dtop}{n}
\newcommand{\ffp}{f\!f'}
\newcommand{\orffp}{\epsilon_{\ffp}}
\newcommand{\uH}[2]{\underline{X}_{r,#2}^{#1}}
\newcommand{\uI}[2]{\underline{I}_{r,#2}^{#1}}
\newcommand{\vvvert}{\vert\kern-0.25ex\vert\kern-0.25ex\vert}
\newcommand{\norm}[2]{\Vert #2\Vert_{#1}}
\newcommand{\Norm}[2]{\left\Vert #2\right\Vert_{#1}}
\newcommand{\opn}[2]{\vvvert #2\vvvert_{#1}}
\newcommand{\mc}{\mathcal}
\newcommand\chain[1]{C_{#1}}
\newcommand\cochain[1]{C^{#1}}
\newcommand\bd[1]{\partial_{#1}}
\newcommand\cobd[1]{\delta^{#1}}
\newcommand\cycle[1]{Z_{#1}}
\newcommand{\op}[1]{I^{#1}}
\newcommand{\iop}[1]{J^{#1}}
\newcommand{\cspace}[1]{Z^\mathsf{c}_{#1}}
\let\boundary\relax
\newcommand\boundary[1]{B_{#1}}
\newcommand\coboundary[1]{B^{#1}}
\newcommand{\inner}[2]{\langle {#1} \, , {#2} \rangle}
\newcommand{\mat}{\mathbb}
\DeclareMathOperator{\supp}{supp}
\newcommand{\Pa}[2]{P_{#1}(#2)} 
\newcommand{\Alt}{\mathrm{Alt}}
\newcommand{\multi}[3][A]{\arrow[#2, phantom, "#3"{name=#1, inner sep=1ex}]}
\begin{document}

\title{Uniform Poincar\'{e} inequalities for the discrete de Rham complex of differential forms}
\author[1]{Daniele A. Di Pietro}
\author[1,3]{J\'{e}r\^{o}me Droniou}
\author[2]{Marien-Lorenzo Hanot}
\author[1]{Silvano Pitassi}
\affil[1]{IMAG, Univ Montpellier, CNRS, Montpellier, France\\
  \email{daniele.di-pietro@umontpellier.fr}, %
  \email{jerome.droniou@umontpellier.fr}
  \email{silvano.pitassi@umontpellier.fr}%
}
\affil[2]{Univ. Lille, UMR 8524 - Laboratoire Paul Painlevé, CNRS, Inria, France\\
  \email{marien-lorenzo.hanot@univ-lille.fr}
}
\affil[3]{School of Mathematics, Monash University, Melbourne, Australia.
}

\maketitle

\begin{abstract}
  In this paper we prove discrete Poincaré inequalities that are uniform in the mesh size for the discrete de Rham complex of differential forms developed in [Bonaldi, Di Pietro, Droniou, and Hu, \emph{An exterior calculus framework for polytopal methods}, J. Eur. Math. Soc., to appear, arXiv preprint \href{http://arxiv.org/abs/2303.11093}{2303.11093 [math.NA]}]. We unify the underlying ideas behind the Poincaré inequalities for all differential operators in the sequence, extending the known inequalities for the gradient, curl, and divergence in three-dimensions to polytopal domains of arbitrary dimension and general topology. A key step in the proof involves deriving specific Poincaré inequalities for the cochain complex supported on the polytopal mesh. These inequalities are of independent interest, as they are useful, for instance, in establishing the existence and stability, on domains of generic topology, of solutions of schemes based on Mimetic Finite Differences, Compatible Discrete Operators or Discrete Geometric Approach.
  \medskip\\
  \textbf{Key words.} Discrete de Rham complex, polytopal methods, Poincaré inequalities
  \medskip\\
  \textbf{MSC2020.} 65N30, 65N99, 14F40
\end{abstract}



\section{Introduction}

The well-posedness of important classes of partial differential equations can be related to the cohomology of the de Rham complex through Poincar\'{e}-type inequalities.
Such inequalities state that the $L^2$-norm of forms that lie in an orthogonal complement of the kernel of the exterior derivative is controlled by the $L^2$-norm of the latter.
Mimicking Poincar\'{e} inequalities on the discrete level is required for the stability of \emph{compatible} numerical schemes, the design of which hinges on discrete versions of the de Rham complex.

In the context of conforming Finite Element de Rham complexes, Poincaré inequalities can be derived through bounded cochain projections; see \cite[Chapter~5]{Arnold:18}, \cite{Arnold.Falk.ea:06}, and also \cite{Christiansen.Licht:20} for a recent generalization.
Owing to the need to identify an underlying computable space of (polynomial) functions, Finite Element constructions are, however, typically limited to conforming meshes with elements of simple shape.
Recent works have pointed out the possibility of constructing arbitrary-order discrete de Rham complexes on general polytopal meshes, either through the use of virtual spaces, standard differential operators, and projections \cite{Beirao-da-Veiga.Brezzi.ea:16,Beirao-da-Veiga.Brezzi.ea:18}, or by replacing both the spaces and operators with fully discrete counterparts \cite{Di-Pietro.Droniou.ea:20,Di-Pietro.Droniou:23*1}; see also the related works \cite{Codecasa.Specogna.ea:09,Codecasa.Specogna.ea:10,Pitassi:21,Beirao-da-Veiga.Lipnikov.ea:14,Bonelle.Ern:14,Bonelle.Di-Pietro.ea:15} dealing with the lowest-order case.
Both the virtual and fully discrete approaches lead to \emph{non-conforming} schemes, resulting in additional difficulties with respect to Finite Elements.

Poincaré inequalities on domains of general topology for the three-dimensional DDR complex of vector proxies of \cite{Di-Pietro.Droniou:23*1} have been recently proved in \cite{Di-Pietro.Hanot:24}.
Using operator-specific techniques, fully general Poincaré inequalities for the gradient and the divergence had already been obtained, respectively, in \cite[Theorem 3]{Di-Pietro.Droniou:23*1} and \cite{Di-Pietro.Droniou:21}.
A Poincaré inequality for the curl on contractible domains had been obtained in \cite[Theorem 20]{Di-Pietro.Droniou:21}.
We also cite here \cite{Di-Pietro:24} as a first example of Poincar\'e-type inequalities for two-dimensional de Rham complexes with enhanced regularity and, possibly, boundary conditions.

Very recently, Discrete de Rham (DDR) complexes of differential forms have been proposed in \cite{Bonaldi.Di-Pietro.ea:25}, generalizing both the constructions of \cite{Di-Pietro.Droniou:23*1} and of \cite{Beirao-da-Veiga.Brezzi.ea:18};
see also \cite{Droniou.Hanot.ea:24} for an extension to manifolds.
The purpose of this work is to establish discrete Poincar\'{e} inequalities for the complex studied in \cite[Section~3]{Bonaldi.Di-Pietro.ea:25}.
The proofs generalize and broaden the ideas of \cite{Di-Pietro.Hanot:24}.
The starting point is the Poincar\'e inequality for cochains stated in Lemma~\ref{lemma:topological.bound} below.
Given a polytopal mesh admitting a conforming simplicial submesh, this inequality is proved by constructing a mapping to lift polytopal cochains onto simplicial cochains, identifying the latter with conforming piecewise polynomial forms through the Whitney and de Rham maps in order to leverage continuous Poincar\'e inequalities, and concluding with local norms estimates.
The discrete Poincar\'e inequality for the arbitrary-order DDR complex is then proved in Theorem~\ref{thm:Poincare} using the Poincar\'e inequality for cochains to construct, given a discrete $k$-form, another discrete $k$-form with the same discrete exterior derivative and bounded by the latter.
Finally, in Corollary~\ref{cor:Poincaré} this result is bridged to a more standard form of the Poincar\'e inequality for discrete $k$-forms that lie in an orthogonal complement of the kernel of the exterior derivative.

The rest of the paper is organized as follows.
Section~\ref{sec:main.result} states the main results of the paper after briefly recalling the DDR construction.
The Poincar\'e inequality for cochains, stated and proved in Section~\ref{sec:poincare.cochains}, is then used in Section~\ref{sec:proof.poincare} to prove the main result.
The paper is completed by three appendices.
Appendix~\ref{app:polynomial.spaces} collects results on local polynomial spaces that generalize those proved in \cite{Di-Pietro.Droniou:20,Di-Pietro.Droniou:23*1} for polynomial functions.
Appendix~\ref{section:whitney} contains estimates for Whitney forms.
Finally, Appendix~\ref{section:algorithm} describes an algorithm for the computation of a basis for a subspace of simplicial cycles used in the proof of the Poincar\'e inequality for cochains.


\section{Main results}\label{sec:main.result}

\subsection{Setting}\label{sec:main.result:mesh}

\subsubsection{Polytopal mesh}

Let $k \in \{0, \dots, n\}$.
A \emph{(flat) $k$-cell} $f$ is a subset of a $k$-dimensional subspace of $\Real^n$ that is homeomorphic to a closed $k$-dimensional ball.
A \emph{polytopal mesh} $\Mh$ of a polytopal domain $\Omega \subset \Real^n$ is a finite collection of $k$-cells, for all $k\in\{0,\ldots,n\}$, such that the following properties hold:
\begin{enumerate}[label=(\roman*)]
\item The union of the $n$-cells cover $\Omega$;
\item For each $k\in\{1,\dots, n\}$, the boundary $\pf$ of each $k$-cell $f$ is a union of $(k-1)$-cells in $\Mh$;
\item The intersection of two distinct $k$-cells is either empty or is a union of lower-dimensional cells in $\Mh$;
\item\label{Mh:intersections} Given a $k$-cell $f$ and an $\ell$-cell $f'$ with $\ell \leq k$, such that $f \neq f'$ and $f\cap f' \neq \emptyset$, we have $f \cap f' \subset \partial f$.
\end{enumerate}

We denote by $\FM{k}(\Mh)$ the set of all $k$-cells of $\Mh$, and each of these $k$-cell is equipped by an orientation.
For any $k$-cell $f \in \FM{k}(\Mh)$, we define the \emph{polytopal submesh induced by $\Mh$ on $f$} to be $\Mh(f) \coloneqq \left\{ f' \in \Mh \st f' \subset f \right\}$.
It is easy to verify that $\Mh(f)$ is a polytopal mesh on $f$.
Moreover, $\Mh(\partial f)$ is also well-defined and is a polytopal mesh on $\partial f$.

A $k$-simplex $f$ is a $k$-cell that is obtained by applying an affine transformation (i.e, composition of a translation and a non-singular linear transformation) to the \emph{reference} $k$-simplex
\begin{equation*}
  \left\{ (x_0, \dots, x_k) \in \Real^{k+1} \st x_i \geq 0, \sum_{i=0}^k x_i = 1 \right\}.
\end{equation*}
We say that $\Mh$ is \emph{simplicial} if  it forms a simplicial mesh in the usual sense.
Given a polytopal mesh $\Mh$ of $\Omega$, we say that $\Ih$ is a \emph{simplicial submesh} of $\Mh$ if: (i) $\Ih$ is a simplicial mesh of $\Omega$; (ii) each $k$-cell of $\Mh$ is a union of $k$-simplices of $\Ih$.

The notion of mesh regularity is given by \cite[Definition~1.9]{Di-Pietro.Droniou:20}.
Namely, we say that a sequence of meshes $(\Mh)_h$ is regular if there exists $\rho \in (0,1)$, independent of $h$, 
and a sequence of simplicial submeshes $(\Ih)_h$ of $(\Mh)_h$, such that:
\begin{itemize}
\item For each $k \in \{1, \dots, \dtop\}$ and any $k$-cell $f$ of $\Ih$, it holds that $\rho h_f \leq r_f$, where $h_f$ denotes the diameter of $f$ and $r_f$ its inradius.
\item For any $k$-cell $f$ of $\Mh$, and all $k$-simplex $f'$ of $\Ih$ such that $f' \subset f$, 
  it holds that $\rho h_f \leq h_{f'}$.
\end{itemize}

To avoid the proliferation of constants, throughout the rest of the paper we write $a \lesssim b$ in place of $a \le C b$, with real number $C > 0$ independent of the mesh size and, for local inequalities on a $k$-cell $f$, also on $f$, but possibly depending on other parameters such as the domain, the mesh regularity parameter, the polynomial degree of the complex, etc.
We also write $a \simeq b$ for ``$a \lesssim b$ and $b \lesssim a$''.

The regularity requirements on the mesh imply that the number of simplices in a cell is bounded uniformly in $h$.
Specifically, for each $k \in \{1, \dots, \dtop\}$ and any $d$-cell $f$ of $\Mh$ with $d \in \{k, \dots, \dtop\}$, it holds that $\card(\FM{k}(\Ih(f)) \lesssim 1$.

\subsubsection{Discrete differential forms}

We give here a brief presentation, and refer to \cite[Section 2 and Appendix A]{Bonaldi.Di-Pietro.ea:25} for more details.
Let $\Mh$ be a polytopal mesh. 
Let $d \in \{0, \dots, \dtop\}$, and $f \in \FM{d}(\Mh)$ be a $d$-cell of $\Mh$.
In particular, $f$ is a (flat) $d$-dimensional manifold. 
For $k \in \{0, \dots, d\}$, we denote by $\Lambda^k(f)$ the set of differential $k$-forms on $f$. 
When a specific regularity is necessary, we prepend the appropriate space.
For instance, $C^0\Lambda^k(f)$ is the set of $k$-forms defined on $f$ and that are continuous. 

We recall that the inner product on the space of $k$-forms can be written
using the Gram determinant as:
\begin{equation}
  \langle \omega,\mu\rangle_f = \int_f \langle \omega_x, \mu_x\rangle \, \DIFF x
  = \int_f \det\big((\langle\omega_i(x),\mu_j(x)\rangle)_{i,j}\big) \, \DIFF x
  \qquad \forall \omega,\mu \in L^2\Lambda^k(f),
  \label{def:inner.product.gram}
\end{equation}
where $(\omega_j)_{j=1,\ldots,k}$ and $(\mu_j)_{j=1,\ldots,k}$ are 1-forms such that $\omega_x = \omega_1(x)\wedge\dots\wedge\omega_k(x)$
and $\mu_x = \mu_1(x)\wedge\dots\wedge \mu_k(x)$. The corresponding $L^2$-norm is
\[
\norm{f}{\omega} \coloneq \langle \omega, \omega\rangle_f^{\frac12}.
\]

We denote by $\star_f$ the Hodge star operator on a $d$-cell $f$ for $d\in\{0,\ldots,n\}$; it is an isomorphism $\Lambda^k(f)\to \Lambda^{d-k}(f)$.
For the sake of brevity, we shall simply denote it ``$\star$'' in the remaining of the paper, since its domain and codomain can be inferred from its argument. We note that, using this Hodge star operator, the inner product \eqref{def:inner.product.gram} can be written
\begin{equation}\label{def:inner.product.forms}
  \langle \omega, \mu \rangle_f \coloneq \int_f \omega \wedge \star \mu
  \qquad \forall \omega, \mu \in L^2 \Lambda^k(f).
\end{equation}

For each $f\in\Mh$ we fix a point ${x}_f$ such that $f$ is star-shaped with respect to a ball centered at $x_f$ and of radius $\gtrsim h_f$. 
We define the Koszul operator $\KOSZUL_f$ on $f$ to be the interior product against the identity vector field shifted at $x_f$.
Specifically, for $\omega \in \Lambda^k(f)$, $\KOSZUL_f\omega \in \Lambda^{k-1}(f)$ satisfies 
\[
(\KOSZUL_f\omega)_{{x}}({v}_1, \dots, {v}_{k-1}) 
= (\omega)_{{x}}({x}-{x}_f,{v}_1, \dots, {v}_{k-1}),
\]
for all ${x}\in f$, and all ${v}_1, \dots, {v}_{k-1}$ tangent vectors to $f$.
From hereon, and for the same reason that we dropped the index $f$ on the Hodge-star operator $\star$, we drop the index $f$ in the notation for the Koszul operator.

\subsection{Local spaces of forms with polynomial coefficients}

For any integer $r \geq 0$, we denote by $\PL{r}{k}(f)$ the space of $k$-forms on $f$ 
with polynomial coefficients of total degree less than or equal to $r$. 
By convention, we set $\PL{-1}{k}(f) \coloneq \left\{ 0 \right\}$.
For $k \geq 1$, the following decomposition of the polynomial space holds (see \cite[Eq.~(3.11)]{Arnold.Falk.ea:06}):
\[
\PL{r}{k}(f) = \DIFF \PL{r+1}{k-1}(f) \oplus \KOSZUL \PL{r-1}{k+1}(f).
\]
With this decomposition in mind, we define the space of \emph{trimmed} polynomials lowering by one the polynomial degree of the first component for $k \in \{1,\ldots,d\}$:
\begin{equation*}
  \begin{aligned}
    \PLtrim{r}{0}(f) &\coloneq \PL{r}{0}(f), \\
    \PLtrim{r}{k}(f) &\coloneq \DIFF \PL{r}{k-1}(f) \oplus \KOSZUL \PL{r-1}{k+1}(f) \quad\text{ if } k\in\{1,\ldots,d\}.
  \end{aligned}
\end{equation*}
Using the inner product \eqref{def:inner.product.gram}, we can define an associated orthogonal projector 
$\ltproj{k}{f} \st L^2\Lambda^k(f) \to \PLtrim{r}{k}(f)$ such that, for $\omega \in L^2\Lambda^k(f)$,
\[
\langle \ltproj{k}{f} \omega, \mu \rangle_f = \langle \omega, \mu \rangle_f, \quad \forall \mu \in \PLtrim{r}{k}(f).
\]

\subsection{The Discrete De Rham construction}\label{sec:ddr}

From this point on, we fix an integer $r \ge 0$ corresponding to the polynomial degree of the discrete de Rham complex.

\subsubsection{Spaces and interpolators}

For any $k \in \{0, \ldots, n\}$, the discrete counterpart $\uH{k}{h}$  of the space $H\Lambda^k(\Omega)$ is defined as
\[
\uH{k}{h} \coloneq \bigtimes_{d = k}^{n} \bigtimes_{f \in \FM{d}(\Mh)} \star^{-1}\PLtrim{r}{d-k}(f) .
\]
Notice that, compared with the original paper \cite{Bonaldi.Di-Pietro.ea:25}, we have taken $\star^{-1}\PLtrim{r}{d-k}(f)$ instead of $\PLtrim{r}{d-k}(f)$ as component space.
In the context considered here, based on a polytopal meshes of a domain of $\Real^n$, $\star$ is an isomorphism between spaces of polynomial forms and these choices equivalent up to an application of the Hodge star to each component.
In what follows, we will therefore directly invoke results from \cite{Bonaldi.Di-Pietro.ea:25} without specifying each time the adaptation (application of suitable Hodge star to the components) to the present setting.
When considering general Riemannian manifolds, however, $\star$ no longer maps polynomials into polynomials, and each choice leads to a different construction.
The one adopted here has the advantage to allow the generalization of the method to manifolds (as it is done in \cite{Droniou.Hanot.ea:24}), whereas the one of \cite{Bonaldi.Di-Pietro.ea:25} yields a more direct link with the DDR complex in vector calculus notation of \cite{Di-Pietro.Droniou:23*1}.

\subsubsection{Local construction}

For any dimension $d \in \{0, \dots, \dtop\}$, any face $f \in \FM{d}(\Mh)$, and any form degree $k \in \{0,\ldots,d\}$, we define the local interpolator $\uI{k}{f} : C^0\Lambda^k(f) \to \uH{k}{f}$ such that
\[
\uI{k}{f} \omega \coloneq \left(
\star^{-1}\ltproj{d'-k}{f'}\star \tr_{f'} \omega
\right)_{f' \in \FM{d'}(\Mh(f)), d' \in \{k,\ldots,d\}}
\qquad \forall \omega \in C^0\Lambda^k(f).
\]
We set, for the sake of brevity,
\begin{equation}\label{eq:oriented.product.pf}
  \langle \cdot,\cdot \rangle_{\pf} \coloneq \sum_{f'\in\FM{d-1}(\Mh(\partial f))}\orffp\langle \cdot,\cdot\rangle_{f'} ,
\end{equation}
where $\orffp\in\left\{ -1,+1 \right\}$ is the relative orientation of $f'$ with respect to $f$. 

Let a form degree $k \in \{0, \dots, \dtop\}$ and a vector of forms $\ul{\omega}_h \in \uH{k}{h}$ be fixed.
For all $d \ge k$ and all $f \in \FM{d}(\Mh)$, we define a local discrete exterior derivative $\DIFF^k_{r,f} \st \uH{k}{f} \rightarrow \star^{-1}\PL{r}{d-k-1}(f)$ and a local potential reconstruction $P^k_{r,f} \st \uH{k}{f} \rightarrow \star^{-1} \PL{r}{d-k}(f)$.
The definition is recursive on the cell dimension $d$:
\begin{itemize}
\item If $d = k$, we set
  \begin{equation} \label{eq:def.P.d=k}
    P^k_{r,f} \ul{\omega}_f \coloneq\, \omega_f \in \star^{-1}\PL{r}{0}(f).
  \end{equation}
\item For $d \in \{ k+1,\ldots,n\}$:
  \begin{enumerate}
  \item We first define the discrete exterior derivative such that
    \begin{multline} \label{eq:def.d}
      \langle \star \DIFF^k_{r,f} \ul{\omega}_f, \mu_f \rangle_f
      = (-1)^{k+1} \langle \star \omega_f , \DIFF \mu_f\rangle_f
      + \langle\star P^k_{r,\pf} \ul{\omega}_f , \tr_\pf \mu_f \rangle_\pf
      \\
      \forall \mu_f \in \PL{r}{d-k-1}(f),
    \end{multline}
    where $P^k_{r,\pf}\ul{\omega}_f$ is the piecewise polynomial form on $\pf$ obtained patching together the polynomials $(P^k_{r,f'}\ul{\omega}_{f'})_{f'\in\FM{d-1}(\Mh(\partial f))}$ (which are made available by the previous step).
  \item We then define the potential reconstruction that satisfies
    \begin{multline*} 
      (-1)^{k+1} \langle \star P^k_{r,f} \ul{\omega}_f, \DIFF \mu_f + \nu_f \rangle_f
      \\
      = \langle \star \DIFF^k_{r,f} \ul{\omega}_f , \mu_f \rangle_f
      - \langle \star P^k_{r,\pf} \ul{\omega}_\pf , \tr_\pf \mu_f \rangle_\pf
      + (-1)^{k+1} \langle \star \omega_f , \nu_f \rangle_f
      \\
      \forall (\mu_f, \nu_f) \in \KOSZUL \PL{r}{d-k}(f) \times \KOSZUL \PL{r-1}{d-k+1}(f).
    \end{multline*}
  \end{enumerate}
\end{itemize}

\begin{remark}[Integral formulas]
  Expanding the definition \eqref{def:inner.product.forms} of the inner product on $f$ and each $f'\in\FM{d-1}(\Mh(f))$, recalling \eqref{eq:oriented.product.pf}, and using the relation $\sigma\wedge\star \xi=\star^{-1}\sigma\wedge\xi$ valid for any couple $(\sigma,\xi)$ of $\ell$-forms, we note that \eqref{eq:def.d} is equivalent to the following relation, which exposes a discrete Stokes formula:
  \begin{equation}\label{eq:def.d.int}
    \int_f \DIFF^k_{r,f} \ul{\omega}_f\wedge \mu_f
    = (-1)^{k+1} \int_f \omega_f \wedge \DIFF \mu_f
    + \sum_{f'\in\FM{d-1}(\Mh(f))}\orffp\int_{f'} P^k_{r,f'} \ul{\omega}_{f'} \wedge \tr_{f'} \mu_f.
  \end{equation}
\end{remark}

The global discrete spaces are connected by the discrete differential
$\ul{\DIFF}^k_{r,h}:\uH{k}{h}\to \uH{k+1}{h}$ obtained by projecting each local discrete exterior derivative on the corresponding component of $\uH{k+1}{h}$:
\begin{equation}\label{eq:def.ulDIFF}
  \ul{\DIFF}^k_{r,h} \ul{\omega}_h \coloneq \left(
  \star^{-1}\ltproj{d-k-1}{f}\star \DIFF^k_{r,f} \ul{\omega}_f
  \right)_{f \in \FM{d}(\Mh), d \in \{k+1, \dots, \dtop\}}.
\end{equation}

\subsubsection{Component norms}

For any form degree $k \in \{0, \dots, \dtop\}$ and any $\ul{\omega}_h \in \uH{k}{h}$, we define the following $L^2$-like norms recursively:
\begin{itemize}
\item For each $f \in \FM{k}(\Mh)$, we set
  \begin{equation}\label{eq:opn.f.k}
    \opn{f}{\ul{\omega}_f} \coloneq \norm{f}{\omega_f}.
  \end{equation}
\item For $d \in \{ k+1,\dots,\dtop\}$ and $f\in\FM{d}(\Mh)$,
  \begin{equation}\label{eq:opn.f}
    \opn{f}{\ul{\omega}_f} \coloneq \left(
    \norm{f}{\omega_f}^2 +
    h_f \sum_{f'\in\FM{d-1}(\Mh(f))} \opn{f'}{\omega_{f'}}^2
    \right)^{\frac12}.
  \end{equation}
\end{itemize}
The global norm is then defined as
\begin{equation} \label{eq:def.hnorm}
  \opn{h}{\ul{\omega}_h} \coloneq \left(
  \sum_{f\in\FM{\dtop}(\Mh)}\opn{f}{\ul{\omega}_h}^2
  \right)^{\frac12}.
\end{equation}

\begin{remark}[Equivalent component norms]\label{rem:component.norms:choice}
  A slightly different choice of component norms is made in \cite[Eq.~A.7]{Di-Pietro.Droniou.ea:24} (with $s=2$ in this reference).
  Accounting for the mesh regularity assumption (which gives, in particular, a uniform bound on the cardinality of $\FM{d-1}(\Mh(f))$ for each $f\in\FM{d}(\Mh)$), these choices are equivalent in terms of scaling with $h$, which entitles us to use the results of \cite[Appendix~A]{Di-Pietro.Droniou.ea:24}.
  A third equivalent choice of norm is obtained replacing \eqref{eq:opn.f} with
  \begin{equation}
    \opn{f}{\ul{\omega}_f} \coloneq \left(
    \sum_{i=k}^{\dtop} h_f^{\dtop - i} \sum_{f'\in\FM{i}(\Mh(f))} \norm{f'}{\omega_{f'}}^2
    \right)^{\frac12}.
    \label{eq:opn.f.expl}
  \end{equation}
\end{remark}

\begin{remark}[Properties of the potential reconstruction]
  Recalling \cite[Eq. (3.37)]{Bonaldi.Di-Pietro.ea:25}, we have that, for all integers $d \in \{0, \dots, \dtop\}$ and $k \in \{0,\ldots,d\}$, all $f \in \FM{d}(\Mh)$, and all $\ul{\omega}_f \in \uH{k}{f}$,
  \begin{equation}\label{eq:projP}
    \ltproj{d-k}{f} \star P^k_{r,f} \ul{\omega}_f = \star \omega_f .
  \end{equation}
  Moreover, following the same arguments as \cite[Eq.~(A.10)]{Di-Pietro.Droniou.ea:24}, we also get
  \begin{equation}\label{eq:Pb.m1}
    \norm{f}{\star P^k_{r,f} \ul{\omega}_f} \lesssim \opn{f}{\ul{\omega}_f}.
  \end{equation}
\end{remark}

\subsection{Poincar\'e inequalities for the Discrete de Rham complex}

The main result of this work is the following generic Poincar\'e inequality, which is valid for forms of any degree and domains of general topology.

\begin{theorem}[Poincaré inequality]\label{thm:Poincare}
  Let $k \in \{0,\ldots,\dtop-1\}$ and let $\ul\omega_h\in  \uH{k}{h}$.
  Then, there exists $\ul\tau_h \in \uH{k}{h}$ such that $\ul\DIFF^k_{r,h} \ul\tau_h = \ul\DIFF^k_{r,h} \ul\omega_h$ and
  \[
  \opn{h}{\ul\tau_h} \lesssim \opn{h}{\ul\DIFF^k_{r,h} \ul\omega_h}.
  \]
  Equivalently,
  \[
  \min_{\ul\phi_h\in \Ker \ul\DIFF^k_{r,h}} \opn{h}{\ul\omega_h + \ul\phi_h} \lesssim \opn{h}{\ul\DIFF^k_{r,h} \ul\omega_h}.
  \]
\end{theorem}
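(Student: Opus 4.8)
The plan is to reduce the discrete Poincar\'e inequality for the DDR complex to the Poincar\'e inequality for cochains stated in Lemma~\ref{lemma:topological.bound}, using the recursive structure of the DDR spaces. Given $\ul\omega_h \in \uH{k}{h}$, I would build $\ul\tau_h$ with the same discrete exterior derivative by a two-layer argument: first pass from discrete forms to cochains, apply the cochain Poincar\'e inequality there, and then lift the resulting cochain back into $\uH{k}{h}$ while controlling norms. Concretely, to each $\ul\omega_h$ one associates its ``lowest-order trace'', i.e.\ the collection of integrals $\big(\int_f P^k_{r,f}\ul\omega_f\big)_{f\in\FM{k}(\Mh)}$ (the component on the $\PL{0}{0}$-part of each $k$-cell), which is a $k$-cochain $c$ on $\Mh$; by the discrete Stokes formula \eqref{eq:def.d.int} the coboundary $\cobd{k} c$ is exactly the lowest-order trace of $\ul\DIFF^k_{r,h}\ul\omega_h$. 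Applying Lemma~\ref{lemma:topological.bound} yields a cochain $\bar c$ with $\cobd{k}\bar c = \cobd{k} c$ and $\|\bar c\|\lesssim\|\cobd{k} c\|$ in the relevant discrete $L^2$-type norm.

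The second layer is to promote $\bar c$ to an element $\ul\tau_h\in\uH{k}{h}$. Here I would exploit that $\ul\DIFF^k_{r,h}\ul\omega_h$ and $\ul\DIFF^k_{r,h}\ul\tau_h$ must agree not only at lowest order but in all components. The natural device is an exactness/splitting property of the local DDR complexes: on each cell $f$, the local complex $(\uH{\bullet}{f},\ul\DIFF^{\bullet}_{r,f})$ is exact in the appropriate range (this is part of the construction in \cite{Bonaldi.Di-Pietro.ea:25}), so one can adjust $\ul\tau_h$ cell by cell, from lower-dimensional cells upward, adding local ``co-exact'' corrections that do not change the discrete exterior derivative on the boundary data already fixed, but match the higher-order components of $\ul\DIFF^k_{r,h}\ul\omega_h$. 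Each such local correction is controlled, via the local Poincar\'e/exactness estimates and \eqref{eq:Pb.m1}, by $\opn{f}{\ul\DIFF^k_{r,f}\ul\omega_f}$ with the correct power of $h_f$ coming from the component norms \eqref{eq:opn.f.expl}. Summing these local bounds and the cochain bound from the first layer, together with the uniform bound on $\card(\FM{d-1}(\Mh(f)))$ from mesh regularity, gives $\opn{h}{\ul\tau_h}\lesssim\opn{h}{\ul\DIFF^k_{r,h}\ul\omega_h}$.

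The equivalence with the second formulation is then soft: since $\ul\DIFF^k_{r,h}\ul\tau_h=\ul\DIFF^k_{r,h}\ul\omega_h$, the element $\ul\phi_h\coloneq\ul\tau_h-\ul\omega_h$ lies in $\Ker\ul\DIFF^k_{r,h}$, hence $\min_{\ul\phi_h\in\Ker\ul\DIFF^k_{r,h}}\opn{h}{\ul\omega_h+\ul\phi_h}\le\opn{h}{\ul\omega_h-\ul\phi_h}=\opn{h}{\ul\tau_h}\lesssim\opn{h}{\ul\DIFF^k_{r,h}\ul\omega_h}$; conversely $\ul\DIFF^k_{r,h}(\ul\omega_h+\ul\phi_h)=\ul\DIFF^k_{r,h}\ul\omega_h$ for any such $\ul\phi_h$, so the minimizing element is a legitimate choice of $\ul\tau_h$, and the two statements carry the same constant.

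The main obstacle I expect is the bookkeeping in the second layer: correctly setting up the recursion over cell dimension so that each local correction matches the required component of $\ul\DIFF^k_{r,h}\ul\omega_h$ \emph{without} disturbing corrections already made on faces of lower dimension, and tracking the $h_f$-weights through \eqref{eq:opn.f.expl} so that the final sum telescopes into $\opn{h}{\ul\DIFF^k_{r,h}\ul\omega_h}$ rather than a negative power of $h$. A secondary subtlety is that Lemma~\ref{lemma:topological.bound} only controls the lowest-order cochain part, so one must be careful that the higher-order corrections are genuinely ``free'' (governed by local exactness) and do not feed back into the topological obstruction; this is where the precise form of the local DDR complement spaces and the projector $\ltproj{k}{f}$ enters, and where the generalization of \cite{Di-Pietro.Hanot:24} from vector proxies to arbitrary form degree does the real work.
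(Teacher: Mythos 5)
Your proposal matches the paper's proof in all essentials: reduce the problem to the cochain Poincar\'e inequality of Lemma~\ref{lemma:topological.bound} by passing to the lowest-order traces $\big(\int_{f'}\omega_{f'}\big)_{f'\in\FM{k}(\Mh)}$ and $\big(\int_f\sigma_f\big)_{f\in\FM{k+1}(\Mh)}$ via the discrete Stokes formula, set the $k$-cell components of $\ul\tau_h$ from the resulting bounded cochain, then define $\tau_f$ on cells of increasing dimension inside the co-exact subspace $\star^{-1}\DIFF\PL{r}{d-k-1}(f)$ so that $\ul\DIFF^k_{r,h}\ul\tau_h=\ul\DIFF^k_{r,h}\ul\omega_h$, and control everything via local Poincar\'e, trace, and potential-reconstruction estimates with the $h_f$-weights of \eqref{eq:opn.f.expl}. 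This is exactly the structure of the paper's Section~\ref{sec:proof.poincare}, so the proposal is correct and on the same route, albeit leaving the recursive construction of $\tau_f$ (the paper's Eq.~\eqref{eq:Pc.def1}) and the verification that all components of the discrete exterior derivative match (not just the lowest-order one) as unworked obstacles.
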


\begin{proof}
  See Section \ref{sec:proof.poincare}.
\end{proof}

This formulation of the Poincar\'e inequalities avoids any reference to an orthogonality condition.
In fact, it does not even rely on the existence of an associated inner product.
However, it is fully equivalent to the usual one, which is stated in the following corollary.

\begin{corollary}[Poincar\'e inequality on orthogonal complements]\label{cor:Poincaré}
  Let $\langle\cdot,\cdot\rangle_h$ be an inner product on $\uH{k}{h}$ such that its associated norm $\norm{h}{{\cdot}}$ satisfies $\norm{h}{\ul{\omega}_h} \simeq \opn{h}{\ul{\omega}_h}$ for all $\ul{\omega}_h \in \uH{k}{h}$, 
  and denote by $(\Ker \ul{\DIFF}_{r,h}^k)^\perp$ the orthogonal complement for this inner product. Then,
  Theorem \ref{thm:Poincare} is equivalent to the following statement:
  \begin{equation}
    \norm{h}{\ul{\omega}_h}\lesssim\norm{h}{\ul{\DIFF}_{r,h}^k\ul{\omega}_h}\qquad \forall \ul{\omega}_h\in(\Ker \ul{\DIFF}_{r,h}^k)^\perp.
    \label{eq:poincare.cor}
  \end{equation}
\end{corollary}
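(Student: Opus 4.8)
The plan is to deduce the equivalence from elementary Hilbert-space arguments, using only that $\Ker\ul\DIFF^k_{r,h}$ is a subspace of the finite-dimensional space $\uH{k}{h}$, so that the orthogonal decomposition $\uH{k}{h}=\Ker\ul\DIFF^k_{r,h}\oplus(\Ker\ul\DIFF^k_{r,h})^\perp$ and the associated orthogonal projections are available. Throughout I would switch freely between $\norm{h}{\cdot}$ and $\opn{h}{\cdot}$ thanks to the assumed equivalence; the only bookkeeping point is that the right-hand side of \eqref{eq:poincare.cor} lives in $\uH{k+1}{h}$, where $\norm{h}{\cdot}$ is read as any inner-product norm equivalent to $\opn{h}{\cdot}$ (all such choices differ by $h$-uniform constants, hence are harmless since the inequalities are only claimed up to the hidden constant). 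I would also note at the outset that the two forms of Theorem~\ref{thm:Poincare} are trivially equivalent, so it suffices to relate \eqref{eq:poincare.cor} to its first form.

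For the implication Theorem~\ref{thm:Poincare}~$\Rightarrow$~\eqref{eq:poincare.cor}, I would take $\ul\omega_h\in(\Ker\ul\DIFF^k_{r,h})^\perp$ and let $\ul\tau_h$ be the element furnished by Theorem~\ref{thm:Poincare}, so that setting $\ul\phi_h=\ul\tau_h-\ul\omega_h$ one has $\ul\phi_h\in\Ker\ul\DIFF^k_{r,h}$. Then $\ul\tau_h=\ul\omega_h+\ul\phi_h$ is precisely the orthogonal splitting of $\ul\tau_h$ along $(\Ker\ul\DIFF^k_{r,h})^\perp\oplus\Ker\ul\DIFF^k_{r,h}$, so Pythagoras gives $\norm{h}{\ul\omega_h}\le\norm{h}{\ul\tau_h}$; combining with $\norm{h}{\ul\tau_h}\simeq\opn{h}{\ul\tau_h}\lesssim\opn{h}{\ul\DIFF^k_{r,h}\ul\omega_h}\simeq\norm{h}{\ul\DIFF^k_{r,h}\ul\omega_h}$ yields \eqref{eq:poincare.cor}.

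For the converse, assume \eqref{eq:poincare.cor} and let $\ul\omega_h\in\uH{k}{h}$ be arbitrary. Taking $\ul\tau_h$ to be the orthogonal projection of $\ul\omega_h$ onto $(\Ker\ul\DIFF^k_{r,h})^\perp$, the difference $\ul\omega_h-\ul\tau_h$ lies in $\Ker\ul\DIFF^k_{r,h}$, whence $\ul\DIFF^k_{r,h}\ul\tau_h=\ul\DIFF^k_{r,h}\ul\omega_h$; applying \eqref{eq:poincare.cor} to $\ul\tau_h$ and using the norm equivalence on both spaces gives $\opn{h}{\ul\tau_h}\simeq\norm{h}{\ul\tau_h}\lesssim\norm{h}{\ul\DIFF^k_{r,h}\ul\tau_h}=\norm{h}{\ul\DIFF^k_{r,h}\ul\omega_h}\simeq\opn{h}{\ul\DIFF^k_{r,h}\ul\omega_h}$, which is the first form of Theorem~\ref{thm:Poincare}. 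Its ``equivalently'' reformulation follows immediately: with $\ul\phi_h=\ul\tau_h-\ul\omega_h\in\Ker\ul\DIFF^k_{r,h}$ one has $\opn{h}{\ul\omega_h+\ul\phi_h}=\opn{h}{\ul\tau_h}$, and the minimum over $\Ker\ul\DIFF^k_{r,h}$ is attained because $\opn{h}{\cdot}$ is a genuine norm on the finite-dimensional space $\uH{k}{h}$.

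I do not expect a genuine obstacle: the whole content is the observation that ``correcting $\ul\omega_h$ by an element of $\Ker\ul\DIFF^k_{r,h}$'' and ``orthogonally projecting $\ul\omega_h$ off $\Ker\ul\DIFF^k_{r,h}$'' are two descriptions of the same operation, the former phrased without reference to an inner product. The only things requiring a line of care are the identification of the relevant norm on $\uH{k+1}{h}$ and the remark that the minimum in the reformulated statement is attained.
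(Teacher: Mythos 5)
Your proposal is correct and follows essentially the same route as the paper's proof: your Pythagoras step $\norm{h}{\ul\omega_h}\le\norm{h}{\ul\tau_h}$ is the paper's observation that the orthogonal projector onto $(\Ker\ul\DIFF^k_{r,h})^\perp$ is norm-nonincreasing, and your projection onto $(\Ker\ul\DIFF^k_{r,h})^\perp$ in the converse direction is the complementary description of the paper's projection onto $\Ker\ul\DIFF^k_{r,h}$. Your side remark about the implicit choice of norm on $\uH{k+1}{h}$ is a reasonable point that the paper leaves tacit.
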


\begin{proof}
  First, we show that Theorem \ref{thm:Poincare} implies \eqref{eq:poincare.cor}.
  Let $\ul{\omega}_h\in(\Ker \ul{\DIFF}_{r,h}^k)^\perp$, and let $\ul{P}_h$ denote the orthogonal projector onto $(\Ker \ul{\DIFF}_{r,h}^k)^\perp$ for the scalar product $\langle\cdot,\cdot\rangle_h$.
  Theorem \ref{thm:Poincare} ensures the existence of $\ul{\phi}_h \in \Ker \ul{\DIFF}_{r,h}^k$ such that
  \begin{equation}\label{eq:orth.choice.phi}
    \opn{h}{\ul{\omega}_h+\ul{\phi}_h} \lesssim \opn{h}{\ul{\DIFF}_{r,h}^k\ul{\omega}_h}.
  \end{equation}
  By definition of $\ul{P}_h$, we have $\ul{P}_h \ul{\omega}_h = \ul{\omega}_h$ and $\ul{P}_h\ul{\phi}_h = \ul{0}$.
  Using the uniform equivalence of the two norms and the fact that $\norm{h}{{\ul{P}_h \cdot}} \leq \norm{h}{{\cdot}}$ since $\ul{P}_h$ is an orthogonal projector, we write
  \begin{equation*}
    \norm{h}{\ul{\omega}_h}
    = \norm{h}{\ul{P}_h(\ul{\omega}_h+\ul{\phi}_h)}
    \leq \norm{h}{\ul{\omega}_h+\ul{\phi}_h}
    \simeq \opn{h}{\ul{\omega}_h+\ul{\phi}_h}
    \overset{\eqref{eq:orth.choice.phi}}\lesssim \opn{h}{\ul{\DIFF}_{r,h}^k\ul{\omega}_h}
    \simeq \norm{h}{\ul{\DIFF}_{r,h}^k\ul{\omega}_h}. 
  \end{equation*}

  To prove the reverse implication, 
  let $\ul\omega_h\in \uH{k}{h}$ and let $\ul\psi_{h}$ be the orthogonal projection of $\ul\omega_h$ into $\Ker \ul{\DIFF}_{r,h}^k$.
  By construction, $\ul\omega_h - \ul\psi_h \in (\Ker \ul{\DIFF}_{r,h}^k)^\perp$ and $\ul\psi_h \in \Ker \ul\DIFF^k_{r,h}$.
  Moreover, it holds:
  \begin{multline*}
    \inf_{\ul\phi_h\in \Ker \ul\DIFF^k_{r,h}} \opn{h}{\ul\omega_h + \ul\phi_h} 
    \leq \opn{h}{\ul\omega_h-\ul\psi_h} \simeq \norm{h}{\ul\omega_h - \ul\psi_h}
    \\
    \overset{\eqref{eq:poincare.cor}}\lesssim \norm{h}{\ul{\DIFF}_{r,h}^k(\ul{\omega}_h-\ul\psi_h)}
    = \norm{h}{\ul{\DIFF}_{r,h}^k\ul{\omega}_h}
    \simeq \opn{h}{\ul\DIFF^k_{r,h} \ul\omega_h}. \qedhere
  \end{multline*}
\end{proof}


\section{A Poincar\'e inequality for cochains}\label{sec:poincare.cochains}

The proof of Theorem \ref{thm:Poincare} relies on a Poincar\'e inequality for cochains on $\Mh$. We first recall some concepts related to this notion, before stating and proving this inequality.

\subsection{Setting}

\subsubsection{Chain complex}

A \emph{$k$-chain} $w$ of $\Mh$ is a formal linear combination of $k$-cells
\begin{equation}
  \label{eq:chain}
  w = \sum_{f \in \FM{k}(\Mh)} w_f\, f,
\end{equation}
where the $w_f$ are real numbers.
The set of all $k$-chains, equipped with the natural addition and scalar multiplication, is a real vector space that we denote by $\chain{k}(\Mh)$.
For a $k$-chain $w \in \chain{k}(\Mh)$, its \emph{support} $\supp(w)$ is defined by
\begin{equation*}
  \supp(w) \coloneqq \left\{ f \in  \FM{k}(\Mh) \st w_f \neq 0 \right\}.
\end{equation*}
Observe that each $k$-cell $f \in \FM{k}(\Mh)$ is also a $k$-chain $f \in \chain{k}(\Mh)$, obtained by considering $w_f=1$ as the unique non-zero real number in the linear combination \eqref{eq:chain}.
Therefore, the set of all $k$-cells $\FM{k}(\Mh)$ forms a basis for $\chain{k}(\Mh)$, which we call the \emph{canonical basis} of $\chain{k}(\Mh)$.
We also consider the following two identifications: (i) we identify the $k$-cell $f$ taken with opposite orientation with the $k$-chain $-f \in \chain{k}(\Mh)$; (ii) we identify the (topological) boundary $\partial f$ with the \emph{boundary $(k-1)$-chain} $\bd{k} f \in \chain{k-1}(\Mh)$ defined by
\begin{equation}
  \bd{k} f \coloneqq \sum_{f' \in \FM{k-1}(\Mh(f))} \orffp\,f',
  \label{eq:def_bd}
\end{equation}
where $\orffp$ is equal to $+1$ if $f'$ has the orientation induced by that of $f$ and $-1$ otherwise.
The \emph{boundary operator} $\bd{k}: \chain{k}(\Mh) \to \chain{k-1}(\Mh)$ is defined by linearity on the space of $k$-chains by setting, for $w \in \chain{k}(\Mh)$ as in \eqref{eq:chain},
\[
\bd{k} w \coloneqq \sum_{f \in \FM{k}(\Mh)} w_f \, \bd{k} f.
\]
It can be checked that $\bd{k}\circ \bd{k+1}=0$ for $k \in \{1, \dots, n-1\}$.
As a consequence, the sequence
\[
\cdots \xrightarrow{\bd{k+1}} \chain{k}(\Mh) \xrightarrow{\bd{k}} \chain{k-1}(\Mh)
\xrightarrow{\bd{k-1}} \cdots,
\]
with $\chain{k}(\Mh) = \{0\}$ if $k<0$ or $k>n$, is a \emph{chain complex}.
Accordingly, we can also define $\bd{n+1} \coloneqq 0$ and $\bd{0} \coloneqq 0$ as the zero maps.

We define the subspace of \emph{$k$-cycles}
\[
\cycle{k}(\Mh) \coloneq \ker\bd{k} = \left\{
w \in \chain{k}(\Mh) \st \bd{k} w = 0
\right\} \subset \chain{k}(\Mh),
\]
and the space of \emph{$k$-boundaries}
\[
\boundary{k}(\Mh) \coloneq \Image\bd{k+1} = \left\{
\bd{k+1} w \st w \in \chain{k+1}(\Mh)
\right\} \subset \chain{k}(\Mh).
\]
Noticing that $\bd{k}\circ\bd{k+1}=0$ implies $\boundary{k}(\Mh)\subset \cycle{k}(\Mh)$, we then define the $k$-th \emph{homology space} as the quotient space
\[
H_k(\Mh) \coloneqq \cycle{k}(\Mh)/{\boundary{k}(\Mh)}.
\]

\subsubsection{Cochain complex}

A $k$-cochain $\lambda$ of $\Mh$ is a linear map $\lambda:\chain{k}(\Mh)\to \Real$.
Therefore, the set $\cochain{k}(\Mh)$ of all $k$-cochains is a vector space corresponding to the dual space of $\chain{k}(\Mh)$.
Given a $k$-chain $w \in \chain{k}(\Mh)$, we denote the value of $\lambda$ at $w$ by $\inner{\lambda}{w} \coloneqq \lambda(w)$.

From basic linear algebra, for each $k$-cell $f \in \FM{k}(\Mh)$, there exist a unique cochain $\hat f \in \cochain{k}(\Mh)$ such that
\[
\inner{\hat f}{f'}
= \delta_{\ffp}
\coloneq \begin{cases}
  1 & \text{ if $f=f'$,} \\
  0 & \text{ otherwise.}
\end{cases}
\]
The set $\big\{\hat f \in \cochain{k}(\Mh) \st f \in \FM{k}(\Mh) \big\}$ is the \emph{canonical basis} of $\cochain{k}(\Mh)$.
Accordingly, a $k$-cochain $\lambda \in \cochain{k}(\Mh)$ can be uniquely written as
\begin{equation}\label{eq:cochain}
  \lambda = \sum_{f \in \FM{k}(\Mh)} \lambda_f \,\hat f \text{ where } \lambda_f \coloneqq \inner{\lambda}{f}.
\end{equation}
We also define the inner product $\inner{\cdot}{\cdot}$ on $\FM{k}(\Mh)$ such that, for any $f, f' \in \FM{k}(\Mh)$, $\inner{f}{f'} \coloneq \inner{\hat f}{f'}=\delta_{\ffp}$, and we extend $\inner{\cdot}{\cdot}$ by linearity on $\chain{k}(\Mh)$.

On the space of $k$-cochains we define the \emph{coboundary operator} $\delta^k: \cochain{k}(\Mh) \to \cochain{k+1}(\Mh)$ as the \emph{adjoint of the boundary operator}, by requiring that, for every $k$-cochain $\lambda \in \cochain{k}(\Mh)$, the following identity holds:
\begin{equation}\label{eq:def.cobd}
  \inner{\cobd{k}\lambda}{w}
  = \inner{\lambda}{\bd{k+1}w}
  \qquad \forall w \in \chain{k+1}(\Mh).
\end{equation}
Using \eqref{eq:def.cobd} and $\bd{k}\circ \bd{k+1}=0$, it can be verified that $\cobd{k} \circ \cobd{k-1}=0$ for $k\in \{1,\ldots,n-1\}$.
As a consequence, the sequence
\begin{equation}
  \cdots \xrightarrow{\cobd{k-1}} \cochain{k}(\Mh) \xrightarrow{\cobd{k}} \cochain{k+1}(\Mh)
  \xrightarrow{\cobd{k+1}} \cdots,
  \label{eq:cochain.complex}
\end{equation}
with $\cochain{k}(\Mh) = \{0\}$ if $k<0$ or $k>n$, is a \emph{cochain complex}.
Accordingly, we also define $\cobd{n} \coloneqq 0$ as the zero map.
We define the space of \emph{$k$-coboundaries}
\[
\coboundary{k}(\Mh)
\coloneq \Image\cobd{k-1}
= \left\{
\cobd{k-1} \lambda \st \lambda \in \cochain{k-1}(\Mh)
\right\}\subset \cochain{k}(\Mh) .
\]

\subsection{Poincar\'e inequality for cochains}

The Poincar\'e inequality for cochains actually consists in finding, for each boundary cochain, a pre-image (by the coboundary operator) with coefficients in the canonical basis that can be estimated in terms of the coefficients of the coboundary cochain.
In what follows, $k$-chains $w \in \chain{k}(\Mh)$ and $k$-cochains $\lambda \in \cochain{k}(\Mh)$ will be identified whenever needed with the corresponding vectors of coefficients $(w_f)_{f \in \FM{k}(\Mh)}$ and $(\lambda_f)_{f \in \FM{k}(\Mh)}$ in their expansions \eqref{eq:chain} and \eqref{eq:cochain} on the canonical bases of $\chain{k}(\Mh)$ and $\cochain{k}(\Mh)$, respectively.

\begin{restatable}[Poincar\'e inequality for cochains]{lemma}{poincare}\label{lemma:topological.bound}
  Let $k \in \{0, \dots, \dtop-1\}$ and let a $(k+1)$-coboundary $\xi=(\xi_f)_{f \in \FM{k}(\Mh)} \in \coboundary{k+1}(\Mh)$ be given.
  Then, there exists a $k$-cochain $ \lambda=(\lambda_f)_{f \in \FM{k}(\Mh)} \in \cochain{k}(\Mh)$ such that $\delta^k\lambda = \xi$ and
  \begin{equation}\label{eq:topological.bound}
    \sum_{T\in\FM{\dtop}(\Mh)} h_T^{\dtop - 2k}
    \sum_{f\in\FM{k}(\Mh(T))} \lambda_{f}^2
    \lesssim \sum_{T\in\FM{\dtop}(\Mh)} h_T^{\dtop - 2k - 2}
    \sum_{f\in\FM{k+1}(\Mh(T))} \xi_{f}^2 .
  \end{equation}
\end{restatable}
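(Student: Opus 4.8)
The plan is to transfer the inequality to the conforming simplicial submesh $\Ih$, on which the cochain complex can be identified with the lowest-order finite element de Rham complex through the Whitney and de Rham maps, and then to invoke the continuous Poincar\'e inequality on $\Omega$. Since every $k$-cell $f$ of $\Mh$ is a union of $k$-simplices of $\Ih$, the assignment $f \mapsto \iota f \coloneqq \sum_{\sigma \in \FM{k}(\Ih(f))} \epsilon_{f,\sigma}\,\sigma$, with $\epsilon_{f,\sigma}\in\{-1,+1\}$ encoding the orientation induced by $f$, extends by linearity to a chain map $\iota\colon\chain{k}(\Mh)\to\chain{k}(\Ih)$ (interior simplicial faces cancel in $\bd{k}\iota f$, so $\iota\bd{k}=\bd{k}\iota$). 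Its algebraic transpose $\iota^*\colon\cochain{k}(\Ih)\to\cochain{k}(\Mh)$, $(\iota^*\tilde\lambda)_f=\sum_\sigma\epsilon_{f,\sigma}\tilde\lambda_\sigma$, is then a cochain map. Because $\iota$ realizes a subdivision of the cellular complex, it induces an isomorphism on homology (both complexes compute the homology of $\Omega$), hence $\iota^*$ induces an isomorphism on cohomology; making this effective --- in particular handling the finitely many topological degrees of freedom --- is what the explicit basis of a complement of $\boundary{k}(\Ih)$ in $\cycle{k}(\Ih)$, computed by the algorithm of Appendix~\ref{section:algorithm}, is used for.

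Given $\xi\in\coboundary{k+1}(\Mh)$, I would first pick any $\tilde\xi\in\cochain{k+1}(\Ih)$ with $\iota^*\tilde\xi=\xi$ and with $\sum_\sigma(\tilde\xi)_\sigma^2$ controlled, cell by cell, by $\sum_f\xi_f^2$ --- for instance supported on a single $(k+1)$-simplex inside each $(k+1)$-cell. Since $\xi=\iota^*\tilde\xi$ is a coboundary and $\iota^*$ is injective on cohomology, $\tilde\xi$ is a coboundary in $\cochain{k+1}(\Ih)$, so its Whitney form $\mathcal{W}\tilde\xi$ is an \emph{exact} piecewise polynomial $(k+1)$-form. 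The continuous Poincar\'e inequality on the bounded Lipschitz domain $\Omega$ (valid for arbitrary topology, $\DIFF$ having closed range) provides $\eta\in H\Lambda^k(\Omega)$ with $\DIFF\eta=\mathcal{W}\tilde\xi$ and $\norm{L^2\Lambda^k(\Omega)}{\eta}\lesssim\norm{L^2\Lambda^{k+1}(\Omega)}{\mathcal{W}\tilde\xi}$. Applying the mesh-uniformly bounded cochain projection $\pi$ of the lowest-order finite element de Rham complex on the regular mesh $\Ih$ \cite{Arnold.Falk.ea:06}, and using that $\pi$ is a projection onto the finite element space to which $\mathcal{W}\tilde\xi$ already belongs, the form $\psi\coloneqq\pi\eta$ is a Whitney $k$-form with $\DIFF\psi=\pi\DIFF\eta=\mathcal{W}\tilde\xi$ and $\norm{L^2}{\psi}\lesssim\norm{L^2}{\mathcal{W}\tilde\xi}$. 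Setting $\tilde\lambda\coloneqq\mathcal{R}\psi$ (de Rham map) and using $\mathcal{R}\mathcal{W}=\mathrm{id}$ together with $\mathcal{R}\DIFF=\cobd{k}\mathcal{R}$ (Stokes) gives $\cobd{k}\tilde\lambda=\tilde\xi$ on $\Ih$.

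It remains to descend to $\Mh$ and to carry out the weighted-norm bookkeeping. Put $\lambda\coloneqq\iota^*\tilde\lambda$; then $\cobd{k}\lambda=\iota^*\cobd{k}\tilde\lambda=\iota^*\tilde\xi=\xi$, the desired preimage. For the estimate I would combine three ingredients: (i) the Whitney estimates of Appendix~\ref{section:whitney}, giving on each simplicial $\dtop$-cell $T'$ the two-sided bound $\norm{L^2(T')}{\mathcal{W}\mu}^2\simeq h_{T'}^{\dtop-2k}\sum_{\sigma\in\FM{k}(\Ih(T'))}\mu_\sigma^2$; (ii) mesh regularity, which makes $h_{T'}\simeq h_T$ for $T'\in\FM{\dtop}(\Ih(T))$, bounds the cardinality of the simplicial subcells of a given cell, and lets one pass from sums over $\FM{\dtop}(\Ih)$ to sums over $\FM{\dtop}(\Mh)$; and (iii) the elementary bound $\lambda_f^2\le\card(\FM{k}(\Ih(f)))\sum_{\sigma\in\FM{k}(\Ih(f))}\tilde\lambda_\sigma^2$ (Cauchy--Schwarz), combined with the fact, from mesh property~\ref{Mh:intersections}, that a $k$-simplex lies in at most one $k$-cell of $\Mh$, and the analogous comparison for $\tilde\xi$ against $\xi$. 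Chaining these with $\norm{L^2}{\psi}\lesssim\norm{L^2}{\mathcal{W}\tilde\xi}$ yields \eqref{eq:topological.bound}. I expect the main obstacle to be precisely this last step: tracking the powers of $h_T$ through the Whitney/de Rham identifications and the $\Ih$-to-$\Mh$ transfer so that every hidden constant depends only on the regularity parameter $\rho$ (and on $r$), not on $h$ --- in particular establishing the two-sided Whitney estimate with the sharp exponent $\dtop-2k$ and checking that the freedom in the choice of $\tilde\xi$ does not spoil the scaling. A secondary delicate point is the topological argument that $\iota^*$ is a cohomology isomorphism, which guarantees that $\tilde\xi$ is genuinely exact on $\Ih$ and which is where the explicit simplicial cycle basis of Appendix~\ref{section:algorithm} enters.
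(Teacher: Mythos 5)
Your overall strategy matches the paper's at a high level --- lift cochains to a conforming simplicial submesh, identify simplicial cochains with Whitney forms, invoke the continuous/finite-element Poincar\'e inequality of Arnold--Falk--Winther, pull back via the de~Rham map and then via the averaging map $\iota^*=J^k$ to $\Mh$. The weighted-norm bookkeeping you outline (your items (i)--(iii)) is essentially the computation \eqref{eq:norm.phi_h}--\eqref{eq:norm.psi_h} in the paper, and using $\iota^*$ as the descending map is precisely the paper's $J^k$. However, there is a genuine gap in how you produce the simplicial lift $\tilde\xi$ of $\xi$, and it is exactly the point the paper spends the bulk of Section~\ref{sec:poincare.cochains} and Appendix~\ref{section:algorithm} resolving.

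You pick an arbitrary $\tilde\xi\in\cochain{k+1}(\Ih)$ with $\iota^*\tilde\xi=\xi$, supported on one $(k+1)$-simplex per $(k+1)$-cell, and then argue: ``$\xi=\iota^*\tilde\xi$ is a coboundary and $\iota^*$ is injective on cohomology, hence $\tilde\xi$ is a coboundary.'' This does not follow. The injectivity of $\iota^*$ on $H^{k+1}$ only says something about \emph{cocycles}: if $\tilde\xi\in\cocycle{k+1}(\Ih)$ and $\iota^*\tilde\xi\in\coboundary{k+1}(\Mh)$, then $\tilde\xi\in\coboundary{k+1}(\Ih)$. But your $\tilde\xi$, concentrated on a single simplex per cell, will in general satisfy $\cobd{k+1}\tilde\xi\neq 0$ whenever $k+1<n$ (the $(k+2)$-simplices interior to a $(k+2)$-cell see an unbalanced collection of chosen $(k+1)$-simplices). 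So $\tilde\xi$ is not a cocycle, the cohomological argument is inapplicable, and consequently $W\tilde\xi$ is not exact and the continuous Poincar\'e inequality gives nothing. You could instead produce a genuine coboundary by lifting a polytopal primitive $\theta$ of $\xi$ to any $\tilde\theta$ with $\iota^*\tilde\theta=\theta$ and taking $\tilde\xi:=\cobd{k}\tilde\theta$, but then the cell-by-cell bound $\sum_\sigma\tilde\xi_\sigma^2\lesssim\sum_f\xi_f^2$ is lost, because $\cobd{k}\tilde\theta$ spreads across a whole cell in an uncontrolled way. Resolving this tension --- producing a lift that is \emph{simultaneously} a cochain map and has local $L^2$-type scaling --- is precisely what the paper's map $\op{k}:\cochain{k}(\Mh)\to\cochain{k}(\Ih)$ (the double recursion of Table~\ref{tab:recursion}, formula \eqref{eq:formula}, Lemma~\ref{lem:cochain.map} and the estimate \eqref{eq:local.bound}) accomplishes: it gives $\tilde\xi=\op{k+1}(\xi)=\op{k+1}(\cobd{k}\theta)=\cobd{k}\op{k}(\theta)$, a coboundary by construction, with the right scaling. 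You also misattribute the role of Appendix~\ref{section:algorithm}: the basis $\mc B_k$ is a local complement of $\cycle{k}(\Ih(\partial f))$ in $\cycle{k}(\Ih(f))$ on each cell $f$, used to make the recursion for $\op{k}$ well defined and bounded; it is not a global cohomology device, and the global topology is handled entirely by invoking the continuous Poincar\'e inequality.
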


\begin{proof}
  See Section \ref{sec:proof.poincare.cochains}.
\end{proof}

\begin{remark}[Interpretation of Lemma \ref{lemma:topological.bound} in the context of low-order compatible methods]
  Consider, for each $k\in \{0, \dots, n\}$, the canonical basis for $\cochain{k}(\Mh)$, and let $\mat D_k$ be the matrix associated with the linear map $\cobd{k}: \cochain{k}(\Mh) \to \cochain{k+1}(\Mh)$ for $k \in \{0, \dots, n-1\}$.
  By \eqref{eq:def.cobd}, $\mat D_k$ is precisely the \emph{incidence matrix between $(k+1)$-cells and $k$-cells}, which is routinely used in low-order compatible schemes like the Compatible Discrete Operator method~\cite{Bonelle.Ern:14,Bonelle.Di-Pietro.ea:15},
  the Discrete Geometric Approach~\cite{Codecasa.Specogna.ea:09, Codecasa.Specogna.ea:10, Pitassi:21}, or the Mimetic Finite Difference method~\cite{Beirao-da-Veiga.Lipnikov.ea:14}.
  In these works, the focus is on dimension $n=3$, and the coboundary operator $\cobd{k}$, for $k \in \{0, 1, 2\}$, acts as a topological counterpart of the continuous differential operators of the de Rham complex; in particular, $\cobd{0}$ acts as a gradient, $\cobd{1}$ as a curl, and $\cobd{2}$ as a divergence.
  For this reason, the following specific notation is also of common use: $\mat G \coloneqq \mat D_0$, $\mat C \coloneqq  \mat D_1$, and $\mat D \coloneqq  \mat D_2$.

  Applying the same reasoning of Corollary \ref{cor:Poincaré} to Lemma \ref{lemma:topological.bound}, we obtain the following mimetic version of the Poincar\'e inequality, which involves a suitable inner product and its associated norm:
  For all $\lambda = (\lambda_f)_{f \in \FM{k}(\Mh)} \in(\Ker \mat D^k)^\perp$, with orthogonal taken with respect to an appropriate mimetic product, it holds
  \begin{equation*}
    \opn{h}{\lambda}\lesssim\opn{h}{\mat D_k \lambda }.
  \end{equation*}
  This result generalizes the known mimetic Poincar\'e inequalities for the gradient and the curl (c.f.~\cite[Lemma 7.45 and Lemma 7.47]{Bonelle:14}) to domains with general topology in arbitrary dimension $n$.
\end{remark}

The proof of the Poincar\'e inequality on the cochains on $\Mh$ relies on a lifting of these cochains as cochains on a simplicial submesh, that we can then identify (through the Whitney and de Rham maps) to conforming piecewise polynomial forms.

\subsection{Mapping between polytopal and simplicial cochains}

Let $\Ih$ be a given simplicial submesh of $\Mh$.
In what follows, for clarity of notation, we use the symbols $f$ and $F$ to denote a generic $k$-cell and $k$-simplex of $\Mh$ and $\Ih$, respectively; the symbols $T$ and $S$ will also sometimes be used instead of $f$ and $F$, respectively, for the special case $k = n$.

The purpose of this section is to construct a graded map $(\op{k})_{k\in\{0,\ldots,n\}}$ that sends polytopal $k$-cochains onto simplicial $k$-cochains, and that is a \emph{cochain map} (i.e., it commutes with the coboundary operator).
We first introduce a \emph{selection operator} which, to each simplex, associates the polytopal element of smaller dimension that contains it;
then, we explain the principle of the construction (which requires a double induction);
finally, we establish the properties of this graded map: cochain property, and norm estimates.

\subsubsection{Selection map}

Each simplex in $\Ih$ is contained in one or more polytopal cells of $\Mh$; in the following lemma, we show that the lowest-dimensional of such cells is unique and that the simplex is actually contained in its relative interior.

\begin{lemma}[Lowest-dimensional containing polytopal cell]\label{lem:incidence}
  For each $F \in \FM{k}(\Ih)$, there exists a unique $d$-cell $f \in \FM{d}(\Mh)$ with $d \in \{k, \dots, n\}$ such that $F \subset f$ and $F \not \subset \partial f$.
\end{lemma}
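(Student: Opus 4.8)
The plan is to argue in two stages: first existence of a lowest-dimensional polytopal cell containing $F$ in its relative interior, then uniqueness.

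For existence, consider the (nonempty) set $\mathcal{C}_F \coloneqq \{ g \in \Mh \st F \subset g \}$; it is nonempty since the simplicial submesh refines each $n$-cell of $\Mh$ and the $n$-cells cover $\Omega$, so at least one $n$-cell of $\Mh$ contains $F$. Pick $f \in \mathcal{C}_F$ of minimal dimension, say $d = \dim f$; note $d \ge k$ since $F$ is a $k$-dimensional set contained in the $d$-dimensional flat cell $f$. I claim $F \not\subset \partial f$. Indeed, if $F \subset \partial f$, then by mesh property (ii) the boundary $\partial f$ is a union of $(d-1)$-cells of $\Mh$, and since $F$ is connected (being homeomorphic to a ball, in fact here a simplex), a covering argument shows $F$ must lie entirely in one such $(d-1)$-cell $f'$ of $\Mh$: the relative interiors of distinct $(d-1)$-cells in $\Mh(\partial f)$ are disjoint by property (iii)/(iv), so the connected set $F$ cannot meet the relative interior of more than one of them without meeting a lower-dimensional cell, and an easy induction on the dimension of the skeleton pins $F$ inside a single cell of $\Mh(\partial f)$ of dimension $\ge k$. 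This contradicts minimality of $d$. Hence $F\not\subset\partial f$, i.e.\ $F$ meets the relative interior of $f$.

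For uniqueness, suppose $f \in \FM{d}(\Mh)$ and $\tilde f \in \FM{\tilde d}(\Mh)$ both satisfy $F \subset f$, $F \not\subset \partial f$ and $F \subset \tilde f$, $F\not\subset \partial{\tilde f}$, with $d \le \tilde d$ (without loss of generality). If $f \ne \tilde f$, then by mesh property \ref{Mh:intersections} applied to the pair $(\tilde f, f)$ (the cell of smaller or equal dimension being $f$) we get $f \cap \tilde f \subset \partial \tilde f$, hence $F \subset f \cap \tilde f \subset \partial \tilde f$, contradicting $F \not\subset \partial{\tilde f}$. Therefore $f = \tilde f$, and in particular $d = \tilde d$, which also yields uniqueness of the dimension.

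The main obstacle is the step inside the existence argument showing that a connected set $F$ contained in $\partial f$ is in fact contained in a \emph{single} cell of the submesh $\Mh(\partial f)$: this requires carefully using properties (iii) and (iv) of the polytopal mesh to see that the relative interiors of the cells of $\Mh(\partial f)$ partition $\partial f$ into disjoint connected pieces in a way compatible with the CW structure, so that a connected subset landing in the $j$-skeleton but not in the $(j-1)$-skeleton sits inside exactly one $j$-cell. Once that cell is identified (and has dimension $\ge k$, since it contains the $k$-dimensional set $F$), the contradiction with minimality of $d$ is immediate. The uniqueness half, by contrast, is essentially a one-line consequence of axiom \ref{Mh:intersections}.
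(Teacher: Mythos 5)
Your uniqueness argument is correct and is exactly the paper's: WLOG $d\le\tilde d$, apply mesh property \ref{Mh:intersections} to the pair $(\tilde f,f)$, conclude $F\subset f\cap\tilde f\subset\partial\tilde f$, contradiction. That half is fine.

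The existence half has the gap you yourself flag, and the connectedness route you sketch does not close it. A connected $k$-dimensional set contained in a union of $(d-1)$-cells need not lie in a single one — a segment can perfectly well cross the interface between two adjacent facets, touching the $(k-1)$-skeleton along the way without ``getting stuck'' there. Your ``easy induction on the skeleton dimension'' would have to rule out exactly this crossing behaviour, and properties (iii)/(iv) of $\Mh$ alone do not do that. What does do it — and what the paper's one-line assertion is implicitly invoking — is the hypothesis that $F$ is not just any connected set but a \emph{simplex of the simplicial submesh} $\Ih$. If $F\subset\partial f$ then $F\in\FM{k}(\Ih(\partial f))$, and $\Ih(\partial f)$ is a simplicial mesh of $\partial f$ refining $\Mh(\partial f)$; in a simplicial mesh every $k$-simplex is a face of some top-dimensional (here $(d-1)$-dimensional) simplex $S$, and by the submesh definition each such $S$ lies inside exactly one $(d-1)$-cell $f'\in\FM{d-1}(\Mh(\partial f))$ (the $(d-1)$-cells have disjoint relative interiors, and $f'$ is a union of $(d-1)$-simplices of $\Ih$). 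Hence $F\subset S\subset f'$, contradicting the minimality of $d$. Replacing your topological argument by this submesh argument makes the existence step go through and matches the intent of the paper's proof.
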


\begin{proof}
  The set $\{\ell\in\{0,\ldots,n\}\,:\,\exists f\in\FM{\ell}(\Mh)\text{ s.t. }F\subset f\}$ is non-empty and contained in $\Natural$, so it has a minimum $d$. Let $f\in \FM{d}(\Mh)$ that contains $F$; we obviously have $d\ge k$ as a $d$-cell cannot contain a simplex of strictly lower dimension. If we had $F\subset \partial f$, since this boundary is the union of cells in $\FM{d-1}(\Mh)$, $F$ would be contained in some cell of $\FM{d-1}(\Mh)$, contradicting the definition of $d$. Hence, $F\subset f$ and $F\not\subset \partial f$.

We now prove the uniqueness of this $f$. Assume that we have a $d'$-cell $f' \neq f$ such that $F\subset f'$ and $F\not\subset \partial f'$. Without loss of generality we can assume that $d' \geq d$. Then, the intersection $f \cap f'$ is non-empty, since it contains $F$. However, by Point~\ref{Mh:intersections} in the definition of $\Mh$, this would imply $F \subset \partial f'$, contradicting the choice of $f'$.
\end{proof}

Thanks to Lemma \ref{lem:incidence}, we can define a function $g_k : \FM{k}(\Ih) \to \Mh$ which maps each $k$-simplex $F$ to the unique $d$-cell $f \coloneqq g_k(F) \in \Mh$ such that $F \subset f$ and $F \not \subset \partial f$.

\begin{proposition}[Partitions via lowest-dimensional containing cells]
  \label{prop:properties}
  For each $k \in \{0, \dots, n\}$ and $d \in \{k, \dots, n\}$, define the set of all $k$-simplices whose lowest-dimensional containing polytopal cell is of dimension $d$:
  \[
  \Pa{k}{d} \coloneqq \left\{
  F \in \FM{k}(\Ih) \st g_k(F) \in \FM{d}(\Mh)
  \right\}.
  \]
  Then, the following two properties hold:
  \begin{enumerate}[label=(\roman*)]
  \item The family $\{\Pa{k}{k}, \dots, \Pa{k}{n}\}$ forms a partition of $\FM{k}(\Ih)$, i.e., $\FM{k}(\Ih) = \bigsqcup_{d=k}^n \Pa{k}{d}$;

  \item If a $k$-simplex lies on the boundary of a $d$-cell, then the lowest-dimensional polytopal cell that contains this simplex is of dimension $<d$: for all $F\in\FM{k}(\Ih)$ such that $F \subset \partial f$ for some $f \in \FM{d}(\Mh)$, then $F \in \bigsqcup_{d'=k}^{d-1} \Pa{k}{d'}$.
  \end{enumerate}
\end{proposition}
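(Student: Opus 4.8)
The plan is to prove the two properties essentially directly from the definition of $g_k$ and from Lemma \ref{lem:incidence}, together with Point \ref{Mh:intersections} in the definition of a polytopal mesh.

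For property (i), I would argue as follows. For each $F \in \FM{k}(\Ih)$, Lemma \ref{lem:incidence} provides a \emph{unique} $d \in \{k,\ldots,n\}$ and a unique $d$-cell $f$ with $F \subset f$ and $F \not\subset \partial f$; by construction $g_k(F) = f \in \FM{d}(\Mh)$, hence $F \in \Pa{k}{d}$. This shows that every $k$-simplex belongs to at least one $\Pa{k}{d}$. Disjointness is immediate: if $F \in \Pa{k}{d} \cap \Pa{k}{d'}$ then $g_k(F)$ is simultaneously a $d$-cell and a $d'$-cell, so $d = d'$. Since each $\Pa{k}{d} \subseteq \FM{k}(\Ih)$ by definition, we conclude $\FM{k}(\Ih) = \bigsqcup_{d=k}^n \Pa{k}{d}$.

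For property (ii), let $F \in \FM{k}(\Ih)$ with $F \subset \partial f$ for some $f \in \FM{d}(\Mh)$. By property (iii) of a polytopal mesh (or directly property (ii) of the mesh definition), $\partial f$ is a union of $(k'-1)$-cells of $\Mh$ with $k' \le d$; iterating, $\partial f$ is covered by cells of $\Mh$ of dimension $\le d-1$. Hence $F$ is contained in some cell of $\Mh$ of dimension at most $d-1$, so the minimum dimension $d'$ appearing in the definition of $g_k(F)$ — which equals the dimension $d'$ with $F \in \Pa{k}{d'}$ — satisfies $d' \le d-1$. Combined with the trivial lower bound $d' \ge k$ (a $d'$-cell cannot contain a $k$-simplex with $d' < k$), we get $F \in \bigsqcup_{d'=k}^{d-1}\Pa{k}{d'}$.

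I do not anticipate a genuine obstacle here: both parts are essentially bookkeeping on top of Lemma \ref{lem:incidence}. The only point requiring a little care is making sure, in property (ii), that "$\partial f$ is a union of lower-dimensional cells" is applied correctly — namely that repeatedly taking boundaries shows $\partial f$ is covered by cells of dimension $\le d-1$ (not merely $\le d$), which is exactly what the recursive structure in the proof of Lemma \ref{lem:incidence} already exploits; one can in fact simply invoke that argument verbatim.
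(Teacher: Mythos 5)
Your proof is correct and follows essentially the same route as the paper's: property (i) is exactly the unpacking of Lemma~\ref{lem:incidence}, and property (ii) observes that $F\subset\partial f$ forces $F$ into a cell of dimension $\le d-1$, pinning down $g_k(F)$ to lower dimension. Two small remarks: the ``iterating'' step in (ii) is unnecessary, since property (ii) of the polytopal-mesh definition already states that $\partial f$ is a union of $(d-1)$-cells (not merely cells of dimension $\le d$); and note that passing from ``$F$ is contained in the union of the $(d-1)$-cells of $\partial f$'' to ``$F$ is contained in \emph{one} such $(d-1)$-cell'' implicitly uses the conformity of the simplicial submesh $\Ih$ — the paper makes the same jump inside the proof of Lemma~\ref{lem:incidence}, so you are aligned with it.
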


  \begin{proof}
    (i) Is a direct consequence of the definition of $\Pa{k}{d}$ and of Lemma \ref{lem:incidence}.
    \smallskip\\
    (ii) This is a direct consequence of the reasoning in the proof of Lemma~\ref{lem:incidence}: if $F\subset \partial f$ for $f\in\FM{d}(\Mh)$ then $F$ is fully contained in some $f'\in\FM{d-1}(\Mh(f))$ and the minimal dimension of the cells that contains $F$ is thus $\le d-1$.
  \end{proof}

\subsubsection{Construction}

In what follows, we will consider that, for all $k \in \{0,\ldots,n\}$, every simplex in $F \in \Pa{k}{k}$ inherits the orientation of the unique $k$-cell $g_k(F) \in \FM{k}(\Mh)$ in which it is contained.

For each $k \in \{0, \dots, n\}$, we construct a map $\op{k} : \cochain{k}(\Mh) \to \cochain{k}(\Ih)$ which associates to each polytopal $k$-cochain $\lambda \in \cochain{k}(\Mh)$ a simplicial $k$-cochain $\op{k}(\lambda) \in \cochain{k}(\Ih)$.
Recalling the partition $\FM{k}(\Ih) = \bigsqcup_{d=k}^n \Pa{k}{d}$ and the fact that $\FM{k}(\Ih)$ is the canonical basis of $\chain{k}(\Ih)$, we define $\op{k}$ by specifying $\op{k}(\lambda) : \Pa{k}{d} \to \Real$ for all $d \in \{k, \dots, n\}$ and for a generic $\lambda$.

\paragraph{Principles driving the construction}

We have to define, for each $k\in\{0,\ldots,n\}$, each $\lambda\in\cochain{k}(\Mh)$, each $d\in\{k,\ldots,n\}$, and each $F\in \Pa{k}{d}$, the value of $\inner{ \op{k}(\lambda)}{F}$.
This has to be done in a way that ensures that $\op{k}$ commutes with the coboundary operator, so that commutation must be inherently embedded in the choice of this value.

The definition of $\inner{\op{k}(\lambda)}{F}$ is carried out through a double recursion.
The outer recursion proceeds on the cochain index $k$, decreasing from $k=n$ to $k=0$, while the inner recursion operates on the index $d$, increasing from $d=k$ to $d=n$.
For each pair of values $k$ and $d$, consider the pair of indices $(k,d)$, which serve to track the construction process of the map $I^k$ by identifying the specific set $P_k(d)$.
Table \ref{tab:recursion} illustrates the steps and how they are intertwined.

\begin{table}
  \centering
  \begin{tikzcd}[
      nodes in empty cells,
      column sep=small,
      row sep=1.3em,
      execute at end picture={
        \draw
        ([xshift=-2em, yshift=-1em]\tikzcdmatrixname-2-3.south) --
        ([xshift=-2em, yshift=0em]\tikzcdmatrixname-7-3.south);
        \draw
        ([xshift=2em,yshift=-1em]\tikzcdmatrixname-2-2.east) --
        ([xshift=0em,yshift=-1em,]\tikzcdmatrixname-2-6.east);
      }
    ]
    & & & \multi[cols, align=center]{r}{\text{Cochain index $k$ of $\lambda$}} & &\\[-1em]
    & & n & n-1 & n-2 & \cdots\\
    & 0 & \\
    \multi[rows, align=center]{d}{\rotatebox{90}{Dimension $d$ such that $F\in\Pa{k}{d}$}} & \vdots & \\
    & n-2 &  &  & P_{n-2}(n-2) \arrow{r}{} \arrow{d}{}& \cdots \\
    & n-1 &  & P_{n-1}(n-1) \arrow{r}{} \arrow{d}{}& P_{n-2}(n-1) \arrow{r}{} \arrow{d}{}& \cdots \\
    & n & P_n(n) \arrow{r}{} & P_{n-1}(n) \arrow{r}{} & P_{n-2}(n) \arrow{r}{}& \cdots
  \end{tikzcd}
  \\
  \caption{
    Illustration of the double recursion used to define the value $\inner{\op{k}(\lambda)}{F}$.
    The recursion proceeds first across the columns from left to right.
    Within each column, it advances row by row, starting from the diagonal element and moving down to the bottom.
    The arrows indicate the sets $P_k(d)$ that have already been used to define $\inner{\op{k}(\lambda)}{F}$ for every $F \in P_k(d)$.
    These sets are then used to define new values as the double recursion progresses.
    By the end of each column $k$, the mapping $\op{k}(\lambda) : \Pa{k}{d} \to \Real$ is fully defined.
  }
  \label{tab:recursion}
\end{table}

The diagonal steps $(k,d) = (k,k)$ in this table are easy. These steps correspond to the case where $F$ is a $k$-simplex contained in a $k$-cell $f\in\FM{k}(\Mh)$; since $\lambda\in\cochain{k}(\Mh)$, the real number $\inner{\lambda}{f}$ is fully defined and $\inner{ \op{k}(\lambda)}{F}$ can then be defined as a fraction (weighted by the relative volumes of $f$ and $F$) of $\inner{\lambda}{f}$.
This choice corresponds to \eqref{eq:top.dimension} (in the case $k=n$) and \eqref{eq:on.boundary} (in the case $k<n$) below.

Let us consider the first non-trivial step $(k,d) = (n-1,n)$ for $n > 1$, which is actually representative of all the other steps in the table. In this case, $F$ is an $(n-1)$-simplex contained in the interior of an $n$-cell $f$.
Let $z\in\cycle{n-1}(\Ih(f))$ be a $(n-1)$-cycle and notice that, since $f$ is homeomorphic to a closed $n$-dimensional ball, $z$ is a $(n-1)$-boundary, i.e., there exists $w \in \chain{n}(\Ih(f))$ such that $z = \bd{n} w$.
The commutation condition requires
\begin{equation}\label{eq:constraint}
  \inner{\op{n}(\cobd{n-1}\lambda)}{w}
  = \inner{\cobd{n-1}\op{n-1}(\lambda)}{w}
  \overset{\eqref{eq:def.cobd}}= \inner{\op{n-1}(\lambda)}{\bd{n} w}
  = \inner{\op{n-1}(\lambda)}{z}.
\end{equation}
In this expression, the left-hand side is fully known by the step $(k,d) = (n,n)$; moreover, expanding the right-hand side according to the decomposition $z = \sum_{F'\in\FM{n-1}(\Ih(f))} z_{F'}\,F'$ of $z$, the values $\inner{\op{n-1}(\lambda)}{F'}$ are also fully known by the step $(k,d)=(n-1,n-1)$ whenever $F'\in\supp(z)$ is contained in $\partial f$ (since, in this case, $F'\in\Pa{n-1}{n-1}$).

To exploit \eqref{eq:constraint} and define new values of $\op{n-1}(\lambda)$, we build upon the theory developed in \cite{Pitassi:22}. Specifically, consider cycles $z$ that belong to a complement $\cspace{n-1}(\Ih(f))$ of the space of cycles whose support is fully contained in $\partial f$; see~\eqref{eq:relative.decomposition} below.
Then, we select a particular $F\not\subset\partial f$ in the support of $z$, fix $\inner{\op{n-1}(\lambda)}{\hat{F}}=0$ for all $\hat{F}\in\supp(z)$ different from $F$, and determine the only remaining undefined value, $\inner{\op{n-1}(\lambda)}{F}$, so that the relation in \eqref{eq:constraint} is satisfied.

For the construction above to be valid, we must be cautious that, each time we select a $z$ to define some values $\inner{\op{n-1}(\lambda)}{F}$ (either $0$ or to ensure \eqref{eq:constraint} for $z$), we are not re-defining a value that is also fixed when selecting another $z'$.
This is achieved by first building (see Appendix \ref{section:algorithm}) a particular basis $(z_i)_i$ of the cycles $\cycle{n-1}(\Ih(f))$ that have some support in the interior of $f$, and such that each $z_i$ has some $(n-1)$-simplex $F_i$ in its support, that is interior to $f$ but not in the support of the other $z_j$ for $j\not= i$ (see \eqref{eq:prop.Fi} below). The process above is then applied to $z_i$ in order to fix $\inner{\op{n-1}(\lambda)}{F}=0$ for all $F\in\supp(z)$ interior to $f$ and different from $F_i$, and to set $\inner{\op{n-1}(\lambda)}{F_i}$ to ensure \eqref{eq:constraint} for $z=z_i$. Doing the same for another $z_j$ in the basis does not risk re-defining any value since, by choice of the basis, $F_i\not\in\supp(z_j)$ and, if $F\in\supp(z_i)\cap\supp(z_j)$, both the processes for $z_i$ and for $z_j$ set $\inner{\op{n-1}(\lambda)}{F}=0$.

The remaining work, carried out in Lemma \ref{lem:cochain.map} below, consists in checking that this local enforcement of the commutation condition \eqref{eq:constraint} actually leads to a global commutation property.

\paragraph{Detailed construction}

We now turn to the detailed construction of the map which is done, as indicated above, in a recursive way on the index $k \in \{n, \dots, 0\}$ (i.e., for $k$ ranging from $n$ down to $0$) and on the index $d \in \{k, \dots, n\}$ as described hereafter.
\begin{itemize}[leftmargin=1em]
\item \emph{Base case $k = n$.}
  For each $n$-simplex $F \in \Pa{n}{n}$, let $f = g_n(F) \in \FM{n}(\Mh)$ and set, for any $\lambda \in \cochain{k}(\Mh)$,
  \begin{equation}
    \inner{\op{k}(\lambda)}{F}
    \coloneq \frac{|F|}{|f|} \inner{\lambda}{f}.
    \label{eq:top.dimension}
  \end{equation}

\item \emph{Induction step $k \in \{n-1, \dots, 0\}$.}
  Assume that $\op{k+1}$ has been defined and let us define $\op{k}(\lambda)$ for a given $\lambda \in \cochain{k}(\Mh)$.

  \begin{itemize}[leftmargin=1em]
  \item \emph{Base case $d = k$.} For each $F \in \Pa{k}{k}$, let $f=g_k(F) \in \FM{k}(\Mh)$ and set
    \begin{equation}
      \inner{\op{k}(\lambda)}{F} \coloneq \frac{|F|}{|f|} \inner{\lambda}{f}.
      \label{eq:on.boundary}
    \end{equation}

  \item \emph{Induction step $d \in \{k+1, \dots, n\}$.}
    Assume that $\op{k}(\lambda) : \Pa{k}{d'} \to \Real$ has been defined for all $d' \in \{k, \dots, d-1\}$ and let us define $\op{k}(\lambda) : \Pa{k}{d} \to \Real$.
    Let $f \in \FM{d}(\Mh)$, and denote by $\Ih(f)$ the simplicial subcomplex induced by $\Ih$ on $f$.
    Let $\mc B_k = \{z_1, \dots, z_p \}$ with $p \in \mathbb N$ ($p=0$ means $\mc B_k = \emptyset$) be the family computed with Algorithm \ref{acyclic_algo} in Appendix \ref{section:algorithm}.
    By Lemma \ref{lem:basis}, $\cspace{k}(\Ih(f))=\mathrm{span}(\mc B_k)$ satisfies
    \begin{equation}
      \cycle{k}(\Ih(f)) = \cycle{k}(\Ih(\partial f)) \oplus \cspace{k}(\Ih(f)),
      \label{eq:relative.decomposition}
    \end{equation}
    and $\mc B_k$ is a basis of $\cspace{k}(\Ih(f))$ that has the following property (see Lemma \ref{lem:duality}): there exists a set of $p$ $k$-simplices $\mc F_k = \left\{ F_1, \dots, F_p \right\}$ of $\Ih(f)$ dual to $\mc B_k$ in the sense that
    \begin{equation}\label{eq:prop.Fi}
      \begin{aligned}
	F_i \not\subset \partial f&\quad \forall i \in \{1, \dots, p\},\\
	\inner{F_i}{z_j} = \delta_{ij}&\quad \forall i,j \in \{1, \dots, p\}.
      \end{aligned}
    \end{equation}
We also recall that, writing $z_i=\left(z_{i,F}\right)_{F \in \FM{k}(\Ih(f))}$, the following bound holds:
    \begin{equation}\label{eq:bound.zi}
    \sum_{F\in \FM{k}(\Ih(f))}z_{i,F}^2\lesssim 1.
    \end{equation}

    Let us now characterize $k$-cycles in terms of $k$-boundaries. Recalling that each $d$-cell $f$ is homeomorphic to a closed $d$-dimensional ball,\footnote{Letting $B^d$ be the $d$-ball, we have the following well-known characterization of its homology \cite{Allen:02}:
    \[
    H_k(B^d) \cong
    \begin{dcases}
      \mathbb Z & \text{if } k=0,\\[4pt]
      0 & \text{if } k \in \{1, \dots, d\}.
    \end{dcases}
    \]
    }
    we may write
    \begin{equation}
      \label{eq:homology.cases}
      \cycle{k}(\Ih(f))=
      \begin{dcases}
        \boundary{0}(\Ih(f)) \oplus \mathrm{span}\{ F^* \} & \text{if } k=0,\\[4pt]
        \boundary{k}(\Ih(f)) & \text{if } k \in \{1, \dots, d\},
      \end{dcases}
    \end{equation}
    where $F^* \in \FM{0}(\Ih(f))$ is an arbitrary $0$-cell on the boundary of $f$ (the existence of $F^*$ is guaranteed by the fact that $d>k=0$).
    The special case $k = 0$ can then be subsumed into the general setting in the spirit of \emph{reduced homology} \cite{Allen:02}.
    Specifically, forming the quotient
    \begin{equation}
      \widetilde Z_0(\Ih(f)) \coloneqq \cycle{0}(\Ih(f)) / \mathrm{span}\{ F^* \},
      \label{eq:substitution.a}
    \end{equation}
we obtain a natural isomorphism $\widetilde Z_0(\Ih(f)) \cong \boundary{0}(\Ih(f))$.
Substituting $\cycle{0}(\Ih(f)) \leftarrow \widetilde Z_0(\Ih(f))$, we may henceforth write $\cycle{k}(\Ih(f))=\boundary{k}(\Ih(f))$ for all $k\in\{0,\dots,d\}$, which is the intended effect of passing to reduced homology in degree zero.
Observing that the quotient $\widetilde Z_0(\Ih(f))$ in \eqref{eq:substitution.a} can be identified with $\mathrm{span}\{F-F^*\,:\,F\in\FM{0}(\Ih(f))\setminus\{F^*\}\}$, we also introduce the substitution
    \begin{equation}
      \mc B_0 \leftarrow \mc B_0 - F^* \coloneqq \{z_1 - F^*, \dots, z_p - F^*\}.
      \label{eq:substitution.b}
    \end{equation}
For each $z_i\in\mc B_k$, let $w_i$ be the $(k+1)$-chain given by Lemma \ref{lem:coeff.estimate}. We have
    \begin{equation}\label{eq:antiderivative.zi}
      \bd{k+1} w_i= z_i.
    \end{equation}
Furthermore, writing $w_i = \left(w_{i,F'}\right)_{F' \in \FM{k+1}(\Ih(f))}$, it follows from \eqref{eq:bound.zi} and \eqref{eq:bound.w.z} that
\begin{equation}\label{eq:bound.wi}
\sum_{F'\in\FM{k+1}(\Ih(f))} w_{i,F'}^2\lesssim 1.
\end{equation}

For each $z_i \in \mc B_k$, let $z_i|_{\partial f} \in \chain{k}(\Ih(\partial f))$ be the \emph{restriction of $z_i$ to $\partial f$}, that is, the $k$-chain such that $\supp(z_i|_{\partial f})=\supp(z_i) \cap \Ih(\partial f)$ and $\inner{F}{z_i|_{\partial f}} = \inner{F}{z_i}$ for each $k$-simplex $F \in \supp(z_i|_{\partial f})$.
    For each $F \in \Pa{k}{d}$ such that $f = g_k(F)$, we then let
    \begin{equation}
      \inner{\op{k}(\lambda)}{F} \coloneqq
      \begin{cases}
        \inner{\op{k+1} (\cobd{k} \lambda)}{w_i}
        - \inner{\op{k}(\lambda)}{z_i|_{\partial f}} &\text{if } F = F_i \in \{F_1, \dots, F_p\}, \\
        0 &\text{otherwise}.
      \end{cases}
      \label{eq:formula}
    \end{equation}
    Observe that, if $F \in \supp(z_i|_{\partial f})$, then $F \in \FM{k}(\Ih)$ and $F \subset \partial f$ since $z_i|_{\partial f} \in C_k(\Ih(\partial f))$.
    Applying Point (ii) in Proposition \ref{prop:properties}, we have $F \in \bigsqcup_{d'=k}^{d-1} \Pa{k}{d'}$, hence $\inner{\op{k}(\lambda)}{F}$ (and thus $\inner{\op{k}(\lambda)}{z_i|_{\partial f}}$) has already been defined by the induction hypothesis on $d$, while $\op{k+1}$ has been defined owing to the recurrence assumption.
    Hence, the recursive formula \eqref{eq:formula} is well-defined, i.e., its right-hand side is computable by induction for each $F \in \Pa{k}{d}$.
  \end{itemize}
\end{itemize}

\begin{example}[Definition of $\op{k}$ on a $3$-cell]
  We exemplify the definition of the map $\op{k}: \cochain{k}(\Mh) \to \cochain{k}(\Ih)$ for the specific case of the pyramidal $3$-cell $f$ shown in Figure~\ref{fig:pyramid}(a).

  Consider, for each $k \in \{3, \dots, 0\}$ and $d \in \{k, \dots, 3\}$, the pair of indices $(k,d)$, which keeps track of the construction process of the map $I^k$ by identifying the specific set $P_k(d)$.
  First, we notice that the cases for which $k=d$, i.e., $(k,d) \in \{(3,3), (2,2), (1,1), (0,0)\}$, are trivial, since formulas \eqref{eq:top.dimension}, \eqref{eq:on.boundary} are directly applied.
  Moreover, for this specific example, the cases $(k,d) \in \{(0,1), (0,3)\}$ are also trivial, since there are no $0$-cells of $\Ih(f)$ that are internal to any $1$-cell or $3$-cell of $\Ih(f)$.
  We thus focus on the remaining cases, i.e., $(k,d) \in \{(2,3), (1,2), (1,3), (0,2)\}$, for which, we show the construction of the sets $\mc F_k$ and $\mc B_k$ associated with the subspace $\cspace{k}(\Ih(f))$ of $\cycle{k}(\Ih(f))$ in Figure \ref{fig:pyramid}(b-d).
  \begin{figure}
    \begin{center}
      \includegraphics[scale=0.80]{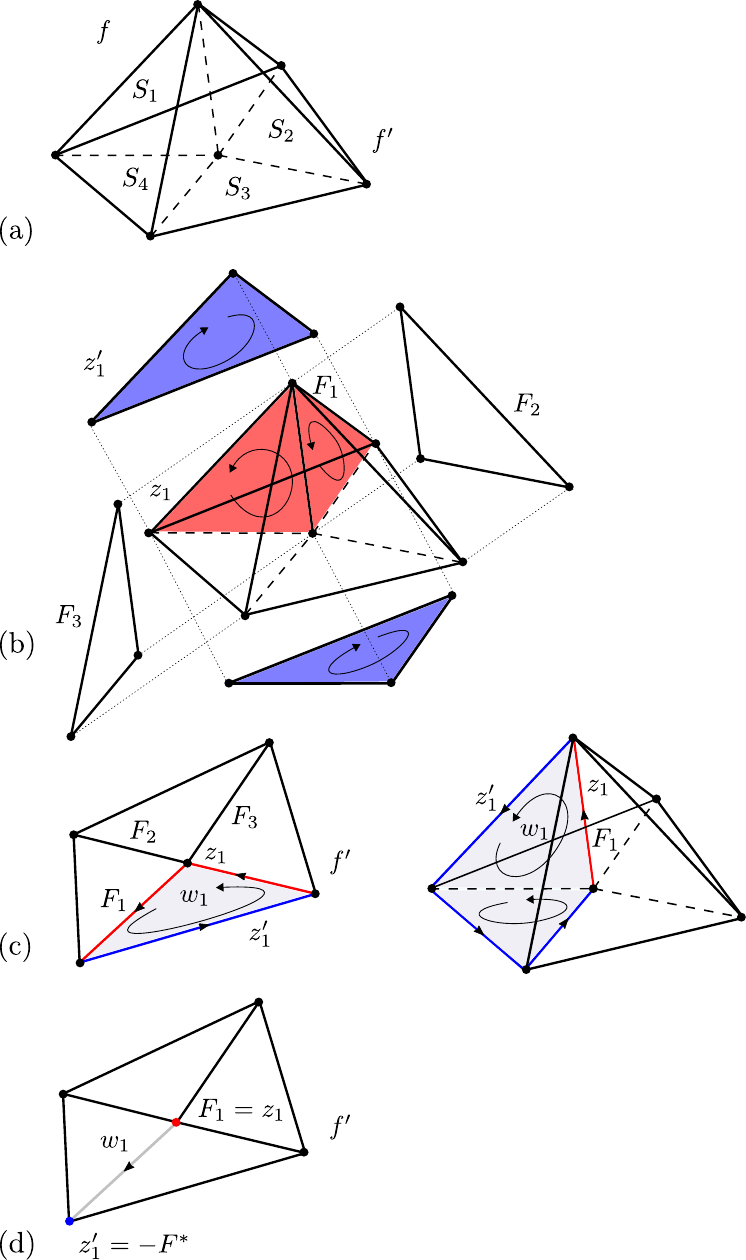}
    \end{center}
    \caption{(a) A simplicial subdivision $\Ih(f)$ of a pyramidal $3$-cell $f$ with a quadrilateral base $f'$; $\Ih(f)$ is made of four $3$-simplices so that $\FM{3}(\Ih(f)) = \{S_1,S_2, S_3, S_4\}$.
      (b) Case $(k,d)=(2,3)$ ($2$-simplices contained in the pyramid). A exploded view of the set $\mc F_2 = \{F_1, F_2, F_3\}$ of $2$-simplices associated with the basis $\mc B_2$ computed with Algorithm \ref{acyclic_algo} in Appendix \ref{section:algorithm}.
      In red and blue, the $2$-cycle $z_1 \in \mc B_2$ satisfying $\inner{F_i}{z_1} = \delta_{i1}$ for $i \in \{1,2,3\}$; the $3$-chain $w_1$ such that $\bd{3} w_1 = z_1$ is $w_1 = S_1$.
      In blue, the restriction $z_1|_{\partial f} \in \chain{2}(\Ih(f))$ of $z_1$ to $\partial f$. 
      (c) Case $(k,d) = (1,2)$ (left, corresponding to $1$-simplices contained in a polygonal $2$-cell) and $(k,d) = (1,3)$ (right, corresponding to $1$-simplices contained in the pyramid).
      On the left, $d=2$, and the set $\mc F_1 = \{F_1, F_2, F_3\}$ of 1-simplices associated with the basis $\mc B_1$ computed with Algorithm \ref{acyclic_algo}.
      In red and blue, the $1$-cycle $z_1 \in \mc B_1$ satisfying $\inner{F_i}{z_1} = \delta_{i1}$ for $i \in \{1,2,3\}$.
      In blue, the restriction $z_1|_{\partial f} \in \chain{1}(\Ih(f))$ of $z_1$ to $\partial f$, and in light grey the $2$-chain $w_1$ such that $\bd{2} w_1 = z_1$.
      On the right, $d=3$, and the set $\mc F_1 = \{F_1\}$.
      In red, blue, and light grey the corresponding $1$-chains $z_1$, $z_1|_{\partial f}$, and $2$-chain $w_1$ (supported by two $2$-simplices).
      (d) Case $(k,d) = (0,2)$.
      The set $\mc F_0 = \{F_1\}$ associated with the basis $\mc B_0$ computed with Algorithm \ref{acyclic_algo}, and the $0$-cell $F^* \in \FM{0}(\Ih(\partial f'))$.
      By the substitution in \eqref{eq:substitution.b}, the $0$-chain $z_1 \in \mc B_0$ is $z_1 \coloneqq F_1 - F^*$, and its restriction $z_1|_{\partial f'}$ to $\partial f'$ is the $0$-chain $z_1|_{\partial f'} = -F^*$. The $1$-chain $w_1$ such that $\bd{1}w_1 = z_1$ is then simply the $1$-simplex in $\Ih(f')$ connecting $F_1$ and $F^*$.
    }
    \label{fig:pyramid}
  \end{figure}

\end{example}

\subsubsection{Properties}

\begin{lemma}[Cochain map property for $(\op{k})_k$]
  \label{lem:cochain.map}
  The graded map $(\op{k})_k$, with $\op{k}: \cochain{k}(\Mh) \to \cochain{k}(\Ih)$ for $k \in \{0, \dots, n\}$, is a \emph{cochain map}, i.e., for all $k \in \{0, \dots, n\}$,
  \begin{equation}
    \cobd{k} \op{k} (\lambda) = \op{k+1} (\cobd{k} \lambda),  \qquad \forall \lambda \in \cochain{k}(\Mh),
    \label{eq:cochain.map}
  \end{equation}
  with the convention that $I^{n+1} \coloneqq 0$.
\end{lemma}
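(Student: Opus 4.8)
The plan is to prove \eqref{eq:cochain.map} by a descending induction on $k$, mirroring the outer recursion of the construction. Dualising through \eqref{eq:def.cobd}, the identity \eqref{eq:cochain.map} at level $k$ is equivalent to
\begin{equation*}
  \inner{\op{k}(\lambda)}{\bd{k+1}w} = \inner{\op{k+1}(\cobd{k}\lambda)}{w}
  \qquad \forall w \in \chain{k+1}(\Ih);
  \tag{$\star$}
\end{equation*}
since both sides are linear in $w$, and every $(k+1)$-simplex of $\Ih$ is contained in some $n$-cell of $\Mh$ (the $n$-cells cover $\Omega$ and each is tiled by $n$-simplices of $\Ih$), it suffices to prove $(\star)$ for all $w \in \chain{k+1}(\Ih(f))$ and all $f \in \FM{d}(\Mh)$ with $d \in \{k+1,\dots,n\}$, which I would establish by an inner induction on $d$ (mirroring the inner recursion of the construction). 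The base case $k = n$ is immediate because $\cochain{n+1}(\Ih) = \{0\}$.

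For the outer induction step, assume \eqref{eq:cochain.map} holds at level $k+1$ and fix $f \in \FM{d}(\Mh)$. I first dispose of the kernel: if $\bd{k+1}w = 0$ then, since $f$ is homeomorphic to a ball and $k+1 \ge 1$, $w \in \cycle{k+1}(\Ih(f)) = \boundary{k+1}(\Ih(f))$, so $w = \bd{k+2}u$ and
\[
  \inner{\op{k+1}(\cobd{k}\lambda)}{w} = \inner{\cobd{k+1}\op{k+1}(\cobd{k}\lambda)}{u} = \inner{\op{k+2}(\cobd{k+1}\cobd{k}\lambda)}{u} = 0
\]
by the outer hypothesis and $\cobd{k+1}\cobd{k} = 0$, so $(\star)$ holds for such $w$. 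It therefore remains to produce, for every $z \in \boundary{k}(\Ih(f)) = \cycle{k}(\Ih(f))$ (using \eqref{eq:homology.cases}--\eqref{eq:substitution.b}, i.e.\ reduced homology when $k = 0$), one $w$ with $\bd{k+1}w = z$ satisfying $(\star)$; the general case then follows by linearity together with the kernel step. I split $z = z_{\mathrm c} + z_\partial$ along \eqref{eq:relative.decomposition}, with $z_{\mathrm c} \in \cspace{k}(\Ih(f))$ and $z_\partial \in \cycle{k}(\Ih(\partial f))$, and treat the two pieces separately.

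For $z_{\mathrm c} = \sum_i a_i z_i$, I take $w_{\mathrm c} = \sum_i a_i w_i$ with the $w_i$ of \eqref{eq:antiderivative.zi}, so that $\bd{k+1}w_{\mathrm c} = z_{\mathrm c}$. Writing $z_i = z_i|_{\partial f} + \sum_{F \not\subset \partial f} z_{i,F}\,F$ and observing, via \eqref{eq:prop.Fi}, that the only interior simplex of $\supp(z_i)$ carrying a nonzero value of $\op{k}(\lambda)$ is $F_i$ (with coefficient $z_{i,F_i} = \inner{F_i}{z_i} = 1$), a one-line expansion using \eqref{eq:formula} gives $\inner{\op{k}(\lambda)}{z_i} = \inner{\op{k}(\lambda)}{z_i|_{\partial f}} + \inner{\op{k}(\lambda)}{F_i} = \inner{\op{k+1}(\cobd{k}\lambda)}{w_i}$, and summing in $i$ yields $(\star)$ for $w_{\mathrm c}$. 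For $z_\partial$ I distinguish two cases. If $d \ge k+2$, then $\partial f \cong S^{d-1}$ has vanishing ($k$-th, reduced) homology since $k < d - 1$, so $z_\partial = \bd{k+1}w_\partial$ for some $w_\partial \in \chain{k+1}(\Ih(\partial f))$; decomposing $w_\partial = \sum_{f'} w_\partial^{(f')}$ over $f' \in \FM{d-1}(\Mh(\partial f))$ with $w_\partial^{(f')} \in \chain{k+1}(\Ih(f'))$ and applying the inner induction hypothesis to each $f'$ (of dimension $d-1 < d$) yields $(\star)$ for $w_\partial$. If $d = k+1$, then $\cycle{k}(\Ih(\partial f))$ is one-dimensional, spanned by $\bd{k+1}[f]$, where $[f] \in \chain{k+1}(\Ih(f))$ is the fundamental chain of $f$ (the sum of the $(k+1)$-simplices of $\Ih(f)$, which by the orientation convention on $\Pa{k+1}{k+1}$ are all co-oriented with $f$); writing $z_\partial = c\,\bd{k+1}[f]$ and taking $w_\partial = c[f]$, the identity $(\star)$ for $w_\partial$ reduces to $\inner{\op{k}(\lambda)}{\bd{k+1}f} = \inner{\op{k+1}(\cobd{k}\lambda)}{[f]}$, which follows from \eqref{eq:on.boundary}, \eqref{eq:top.dimension}, the relation $\inner{\cobd{k}\lambda}{f} = \inner{\lambda}{\bd{k+1}f}$ of \eqref{eq:def.cobd}, and the facts that $\Ih(f)$ partitions $f$ and that the $k$-simplices of $\Ih$ contained in a given $k$-cell of $\partial f$ partition that cell (so that the volume factors telescope). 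Taking $w = w_{\mathrm c} + w_\partial$ and invoking linearity and the kernel step closes the inner induction step, hence the outer one.

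The dual reformulation, the kernel computation, and the ``$z_{\mathrm c}$'' expansion are all short once the definitions are unwound, and the degree-zero reduced-homology substitutions \eqref{eq:substitution.a}--\eqref{eq:substitution.b} are routine to carry through each step. The main obstacle is organising the two nested inductions so that every step only consumes facts already available---in particular, handling the boundary-cycle component $z_\partial$ by a \emph{separate} inner induction on $\dim f$, with the genuinely different base case $d = k+1$ (where $z_\partial$ has no antiderivative supported on $\partial f$ and one must fall back on the fundamental chain of $f$), and checking that the orientations inherited by the simplices of $\Pa{k+1}{k+1}$ make all the volume factors in the $d = k+1$ computation add up correctly.
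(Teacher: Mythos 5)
Your proposal is correct and follows essentially the same approach as the paper: the same outer descending recursion on $k$ and inner ascending recursion on $d$, the same dualised reformulation, the same trivial verification on $\cycle{k+1}(\Ih(f))$, the same split of a boundary $z$ into a $\cspace{k}$-piece (handled via \eqref{eq:prop.Fi} and \eqref{eq:formula}) and a $\cycle{k}(\Ih(\partial f))$-piece (handled via the fundamental chain when $d = k+1$, and by pushing $w_\partial$ to the boundary and invoking the inner induction when $d \ge k+2$). The only difference is presentational --- you decompose a generic cycle as $z_{\mathrm c} + z_\partial$ and construct a preimage piece by piece, whereas the paper fixes a basis $\{z_1,\dots,z_q\}$ of $\cycle{k}(\Ih(f))$ with preimages $\{w_1,\dots,w_q\}$ and checks the identity on each basis element --- but the content is identical.
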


\begin{proof}
  We verify the formula \eqref{eq:cochain.map} by induction on $k\in \{n,n-1,\ldots,0\}$.\medskip\\
  \emph{1) Base case $k = n$.}
  Recalling the cochain complex \eqref{eq:cochain.complex}, we have $\cobd{n}=0$ and thus, by the convention on $\op{n+1}$, the relation \eqref{eq:cochain.map} holds.
  \medskip\\
  \emph{2) Induction step $k \in \{n-1, \dots, 0\}$.}
  Assume that \eqref{eq:cochain.map} holds for $k+1$ and let us prove it for $k$.
  To alleviate the notation, in the rest of the proof, for $\lambda\in \cochain{k}(\Mh)$, we set
  \begin{equation}\label{eq:shortcut.notation}
    \text{%
      $\tilde \lambda \coloneqq \op{k}(\lambda) \in \cochain{k}(\Ih)$ and $\tilde \xi \coloneqq \op{k+1} (\cobd{k} \lambda) \in \cochain{k+1}(\Ih)$.
    }
  \end{equation}
  We thus need to check that $\cobd{k} \tilde \lambda = \tilde \xi$, that is, recalling that $\FM{k+1}(\Ih)$ is the canonical basis of $\chain{k+1}(\Ih)$ and the relation \eqref{eq:def.cobd}, that
  \[
  \inner{\tilde \lambda}{\partial_{k+1} F}
  = \inner{\tilde \xi}{F}
  \qquad \forall F \in \FM{k+1}(\Ih).
  \]
  Since any $F \in \FM{k+1}(\Ih)$ is contained in $\Ih(f)$ for $f=g_k(F)\in\FM{d}(\Mh)$, it is sufficient to verify the following local version of the commutation property:
  For all $d \in \{k+1, \dots, n\}$ and all $f \in \FM{d}(\Mh)$,
  \begin{equation}\label{eq:check.condition.a}
    \inner{\tilde \lambda}{\partial_{k+1} w} = \inner{\tilde \xi}{w}
    \qquad \forall w \in \chain{k+1}(\Ih(f)).
  \end{equation}

  Let us now focus on the boundary map $\bd{k+1}$ appearing in \eqref{eq:check.condition.a}.
  By \eqref{eq:relative.decomposition}, we can find a basis $\{z_1, \dots, z_q\}$ of $\cycle{k}(\Ih(f))$ such that $\{z_1, \dots, z_p\}$ (with $p\le q$) is the basis $\mc B_k$ of $\cspace{k}(\Ih(f))$ obtained via Algorithm \ref{acyclic_algo} in Appendix \ref{section:algorithm} (and used in the definition of $\tilde \lambda$ in the recursive step), and $\{z_{p+1}, \dots, z_q\}$ is a basis of $\cycle{k}(\Ih(\partial f))$.
  Recall the substitution we performed in \eqref{eq:substitution.a} and \eqref{eq:substitution.b} for the sets $\cycle{0}(\Ih(f))$ and $\mc B_0$ via the $0$-cell $F^* \in \FM{0}(\Ih(\partial f))$ in \eqref{eq:homology.cases}, and notice also that the property \eqref{eq:prop.Fi} of the set $\mc F_0$ of $0$-simplices still holds for the new set $\mc B_0$ after the substitution \eqref{eq:substitution.b}, since $F^*\subset\partial f$.
  All of the above substitutions ensure that $\cycle{k}(\Ih(f)) = \boundary{k}(\Ih(f))$ whatever the value of $k$ (including $k = 0$), and we can find $(k+1)$-chains $\{w_1, \dots, w_q\}$ such that $z_i = \bd{k+1} w_{i}$ for each $i \in \{1, \dots, q\}$; we choose these chains to ensure that the first $p$ ones are those used in the definition of $I^k$ (see \eqref{eq:antiderivative.zi}).
  Accordingly, consider the direct decomposition
  \begin{equation}\label{eq:linear.map.decomposition}
    \chain{k+1}(\Ih(f))
    = \cycle{k+1}(\Ih(f)) \oplus \mathrm{span}\,\{w_1, \dots, w_q\},
  \end{equation}
  which follows by applying the First Isomorphism Theorem to the linear map $\bd{k+1}: \chain{k+1}(\Ih(f)) \to \boundary{k}(\Ih(f))$ (cf., e.g., \cite[Theorems~3.5 and~3.6]{Roman.Axler.ea:05}), after noticing that $\{z_1,\ldots,z_q\}$ is a basis of $\boundary{k}(\Ih(f))=\cycle{k}(\Ih(f))$, since $f$ is homeomorphic to a closed $d$-dimensional ball.

  Since $\tilde \xi \in \cochain{k+1}(\Ih)$, by induction hypothesis it holds
  \begin{equation}\label{eq:zero.cobd}
    \cobd{k+1} \tilde \xi
    \overset{\eqref{eq:shortcut.notation}}= \cobd{k+1} \op{k+1} (\cobd{k} \lambda)
    \overset{\eqref{eq:cochain.map}}= \op{k+2} \cobd{k+1}(\cobd{k} \lambda)
    = 0,
  \end{equation}
  where the last equality follows from the cochain complex property $\cobd{k+1}\circ \cobd{k} = 0$.
  Let now $w \in \cycle{k+1}(\Ih(f))$.
  Exploiting the fact that $\cycle{k+1}(\Ih(f)) = B_{k+1}(\Ih(f))$ since the $(k+1)$-th homology group is trivial (as $f$ is contractible), we can write $w = \bd{k+2} t$ with $t \in \chain{k+2}(\Ih(f))$.
  We then have
  \begin{gather}
    \inner{\tilde \lambda}{\bd{k+1}w}
    = \inner{\tilde\lambda}{0}=0, \label{eq:zero.a}
    \\
    \inner{\tilde\xi}{w}
    = \inner{\tilde\xi}{\bd{k+2}t}
    \overset{\eqref{eq:def.cobd}}= \inner{\cobd{k+1}\tilde\xi}{t}\overset{\eqref{eq:zero.cobd}}
    = 0,\label{eq:zero.b}
  \end{gather}
  meaning that \eqref{eq:check.condition.a} is trivially verified for the elements $w \in \cycle{k+1}(\Ih(f))$.
  Therefore, recalling the decomposition \eqref{eq:linear.map.decomposition} as well as the fact that $z_i=\bd{k+1}w_i$ for all $i\in\{1,\ldots,q\}$, the relation \eqref{eq:check.condition.a} follows if we show that
  \begin{equation}\label{eq:check.condition.b}
    \inner{\tilde \lambda}{z_{i}}
    =  \inner{\tilde \xi}{w_{i}}
    \qquad \forall i \in \{1, \dots, q\}.
  \end{equation}
  We check this condition by induction on the dimension $d \in \{ k+1,\ldots,\dtop\}$ of $f$.
  To this end, we will treat separately the cases $i \in \{1,\ldots,p\}$ (corresponding to $z_i \in \cspace{k}(\Ih(f))$) and $i \in \{p+1,\ldots,q\}$ (corresponding to $z_i \in \cycle{k}(\Ih(\partial f))$).
  \smallskip \\
  \emph{2a) Base case $d = k+1$.}
  Let $f \in \FM{k+1}(\Ih)$.
  In this case, $\cycle{k}(\Ih(\partial f))$ has dimension 1 and is spanned by the $k$-cycle
  \begin{align}\label{eq:zq}
    z_{p+1}
    &= \sum_{F' \in \FM{k}(\Ih(\partial f))} \orffp\, F'
    \\ \label{eq:zq'}
    &= \bd{k+1}\underbrace{\left(
      \sum_{F \in \FM{k+1}(\Ih(f))} F
      \right)}_{\eqcolon w_{p+1}},
  \end{align}
  where $f' \in \FM{k}(\Mh(\partial f))$ is such that $F' \subset f'$ and $\orffp$ denotes its orientation relative to $f$ (which is inherited by $F'$); see Figure~\ref{fig:scaling}.
  We can then write
  \begin{align}
    \begin{split}
      \inner{\tilde \lambda}{{z_{p+1}}}
      \overset{\eqref{eq:shortcut.notation},\,\eqref{eq:zq}}&=
      \sum_{F' \in \FM{k}(\Ih(\partial f))} \orffp \,  \inner{\op{k}(\lambda)}{F'}\\
      \overset{\eqref{eq:on.boundary}}&= \sum_{f' \in \FM{k}(\Mh(\partial f))} \orffp \left(
      \sum_{F' \in \FM{k}(\Ih(\partial f)), \, F' \subset f'}  \frac{|F'|}{|f'|} \inner{\lambda}{f'}
      \right)\\
      &= \sum_{f' \in \FM{k}(\Mh(\partial f))} \orffp \inner{\lambda}{f'}\\
      \overset{\eqref{eq:def_bd},\eqref{eq:def.cobd}}&= \inner{\cobd{k}\lambda}{f},
      \label{eq:first.term}
    \end{split}
  \end{align}
  where in the second line we have used the fact that, if $F' \in \FM{k}(\Ih(\partial f))$, then $F' \in \Pa{k}{k}$ by Point (ii) in Proposition \ref{prop:properties},
  while, to pass to the third line, we have used the fact that $\sum_{F' \in \FM{k}(\Ih(\partial f)), \, F' \subset f'} \frac{|F'|}{|f'|} =1$.

  \begin{figure}
    \begin{center}
      \includegraphics[scale=0.9]{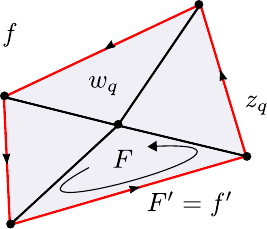}
    \end{center}
    \caption{
      Illustration of the $2$-chain $w_{p+1}$ (light grey) and $1$-chain $z_{p+1}$ (red) for a simplicial subdivision $\Ih(f)$ of a quadrilateral $2$-cell $f$.
      Note how the orientation of the $2$-simplex $F$ is the same as that of the $2$-cell $f$, as well as how $F$ and $f$ induce the same orientation on the unique $1$-cell $f' = F'$ (also, $1$-simplex in this case) which lies in their intersection $\partial F \cap \partial f = \{f'\}=\{F'\}$ so that $\epsilon_{FF'} = \orffp$.
    }
    \label{fig:scaling}
  \end{figure}

  Noticing that $F\in\FM{k+1}(\Ih(f))$ implies $g_{k+1}(F)=f$ and thus $F\in\Pa{k+1}{k+1}$, we can apply a similar reasoning as above and use the definition of $\op{k+1}(\cobd{k} \lambda)$ in \eqref{eq:on.boundary} but with $\cobd{k}\lambda$ in place of $\lambda$ to get
  \begin{align}
    \begin{split}
      \inner{\tilde \xi}{w_{p+1}}
      \overset{\eqref{eq:shortcut.notation},\,\eqref{eq:zq'}}&=
      \sum_{F \in \FM{k+1}(\Ih(f))}  \inner{\op{k+1}(\cobd{k}\lambda)}{F}\\
      &= \sum_{F \in \FM{k+1}(\Ih(f))}  \left(\frac{|F|}{|f|}\inner{\cobd{k} \lambda}{f} \right)\\
      &= \inner{\cobd{k} \lambda}{f}.
      \label{eq:second.term}
    \end{split}
  \end{align}
  Comparing \eqref{eq:first.term} and \eqref{eq:second.term}, we get \eqref{eq:check.condition.b} when $i = p + 1$.

  Take now $i \in \{ 1, \ldots, p \}$, i.e., $z_i$ is an element of the basis $\mc B_k$ of $\cspace{k}(\Ih(f))$.
  Recall that $z_i|_{\partial f} \in \chain{k}(\Ih(\partial f))$ is the restriction of $z_i$ to $\partial f$, so that ${\supp(z_i - z_i|_{\partial f})} \cap \FM{k}(\Ih(\partial f)) = \emptyset$.
  The properties \eqref{eq:prop.Fi} of the set of $k$-simplices $\mc F_k = \{F_1, \dots, F_p\}$ imply:
  (i) $F_i \not\subset \partial f$ and $F_i \in \supp(z_i)$ for any $i \in \{1, \dots, p\}$;
  (ii) $F_j \notin \supp(z_i)$ for any $j \neq i, j \in \{1, \dots, p\}$.
  By Property (i), $F_i \in \supp(z_i - z_i|_{\partial f})$, and so, in particular, the set $\supp(z_i - z_i|_{\partial f})$ is nonempty.
  Thus, if $F \in \supp(z_i - z_i|_{\partial f})$, then $F \subset f$ and $F \in \Pa{k}{k+1}$.
  By Property (ii), if $F \in \supp(z_i-z_i|_{\partial f})\setminus\{F_i\}$, we have $F\not\in\{F_1,\ldots,F_p\}$ and thus, applying the definition of $\tilde \lambda$ given by \eqref{eq:formula}, we infer $\inner{\tilde \lambda}{F} = 0$.
  Hence, $\inner{\tilde \lambda}{z_i-z_i|_{\partial f}-F_i} = 0$, so that $\inner{\tilde \lambda}{z_i|_{\partial f}}=\inner{\tilde \lambda}{z_i-F_i}$, and
  \begin{equation}\label{eq:on.internal}
    \inner{\tilde \lambda}{F_i}
    \overset{\eqref{eq:formula},\,\eqref{eq:shortcut.notation}}=\inner{\tilde \xi}{w_i}
    -\inner{\tilde \lambda}{z_i|_{\partial f}}
    =\inner{\tilde \xi}{w_i}
    -\inner{\tilde \lambda}{z_i - F_i},
  \end{equation}
  from which we infer
  \begin{align*}
    \inner{\tilde \lambda}{z_i}
    =\inner{\tilde \lambda}{z_i - F_i} + \inner{\tilde \lambda}{F_i}
    \overset{\eqref{eq:on.internal}}&= \inner{\tilde \xi}{w_i},
  \end{align*}
  and conclude the proof of \eqref{eq:check.condition.b} for $i \in \{1, \ldots, p\}$.
  \smallskip\\
  \emph{2b) Induction step $d \in \{k+2, \dots, n\}$.}
  Assume that \eqref{eq:check.condition.a} holds for all $d'$-cells with $d' \in \{k+1, \dots,  d-1\}$, and let $f \in \FM{d}(\Ih)$.

  Suppose first $z_i \in \cycle{k}(\Ih(\partial f))$.
  Since $f$ is homeomorphic to a closed $d$-dimensional ball, its boundary $\partial f$ is homeomorphic to a $(d-1)$-sphere\footnote{Letting $S^d$ be the $d$-sphere, we have the following well-known characterization of its homology \cite{Allen:02}:
  \[
  H_k(S^d) \cong
  \begin{dcases}
    \mathbb Z &\text{if } k \in \{0, d\}\\
    0 &\text{if } k \in \{1, \dots, d-1\}.
  \end{dcases}
  \]
  }.
  Since $k < d-1$, and recalling the substitution we performed in \eqref{eq:substitution.a} for the set $\cycle{0}(\Ih(f))$ using the $0$-cell $F^* \in \FM{0}(\Ih(\partial f))$ in \eqref{eq:homology.cases}, together with the fact that $\cycle{0}(\Ih(\partial f)) \subset \cycle{0}(\Ih(f))$, there exists a $(k+1)$-chain $w'_i \in \chain{k+1}(\Ih(\partial f))$ such that $\bd{k+1} w'_i = z_i$.
  
  Recalling that $z_i=\bd{k+1}w_i$ and that $f$ is contractible, we can write $w_i = w_i' + \bd{k+2} t$ with $t \in \chain{k+2}(\Ih(f))$.
  This observation implies that verifying \eqref{eq:check.condition.b} for $w'_i$ suffices, as \eqref{eq:check.condition.a} is trivially satisfied for $w=\bd{k+2}t \in \cycle{k+1}(\Ih(f))$, as shown by the relations \eqref{eq:zero.a} and \eqref{eq:zero.b}.
  
  Since every $F \in \supp(w'_i)$ is contained in $\Ih(f')$, where $f' = g_{k+1}(F) \in \Mh(\partial f)$ is a $d'$-cell with $d' \in \{k+1, \dots, d-1\}$, we can apply the induction hypothesis on $d$ to each $f'$.
  As a result, \eqref{eq:check.condition.a} is satisfied for each $w = F \in \supp(w'_i)$.
  Therefore, \eqref{eq:check.condition.b} is satisfied for $w=w_i'$, and thus for $w=w_i$, as required. 

  Now, suppose $z_i \in \cspace{k}(\Ih(f))$. In this case, to verify \eqref{eq:check.condition.b} is sufficient to repeat the same argument applied for the case $d = k+1$.
\end{proof}

\begin{lemma}[Estimate on $I^k$]
  Let $k \in \{0, \dots, n\}$ and $\lambda = (\lambda_f)_{f \in \FM{k}(\Mh)} \in \cochain{k}(\Mh)$, and set $\tilde \lambda = (\tilde \lambda_F)_{F \in \FM{k}(\Mh)} \coloneqq \op{k}(\lambda) \in \cochain{k}(\Ih)$.
  Then, for all $T\in \FM{n}(\Mh)$, the following uniform bound holds:
  \begin{equation}\label{eq:local.bound}
    \sum_{S \in \FM{n}(\Ih(T))} \sum_{F\in\FM{k}(\Ih(S))} \tilde \lambda_{F}^2
    \lesssim \sum_{f\in\FM{k}(\Mh(T))} \lambda_{f}^2 + \sum_{f \in \FM{k+1}(\Mh(T))} (\cobd{k} \lambda)_{f}^2 .
  \end{equation}
\end{lemma}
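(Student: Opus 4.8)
\medskip
\noindent\textbf{Proof plan.}\quad The plan is to reduce \eqref{eq:local.bound} to a pointwise bound on the coefficients $\tilde\lambda_F$ and then run a double induction that follows exactly the recursion used to build $\op{k}$. Fix $T\in\FM{n}(\Mh)$ and write $R_T$ for the right-hand side of \eqref{eq:local.bound}. Mesh regularity gives $\card(\FM{j}(\Ih(T)))\lesssim 1$ for every $j$, hence also $\card(\FM{j}(\Mh(T)))\lesssim 1$ (each cell of $\Mh(T)$ is a union of at least one simplex of $\Ih(T)$); consequently the left-hand side of \eqref{eq:local.bound} is $\simeq\sum_{F\in\FM{k}(\Ih(T))}\tilde\lambda_F^2\lesssim\max_{F\in\FM{k}(\Ih(T))}\tilde\lambda_F^2$, so it is enough to show $\tilde\lambda_F^2\lesssim R_T$ for each $F\in\FM{k}(\Ih(T))$. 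Before starting the induction I would record the geometric fact that, for every $F\in\FM{j}(\Ih(T))$, the cell $g_j(F)$ lies in $\Mh(T)$: since by Lemma~\ref{lem:incidence} $F$ sits in the relative interior of $g_j(F)$ and $F\subset T$, Point~\ref{Mh:intersections} of the definition of $\Mh$ forces $g_j(F)$ to be either $T$ itself or a cell of $\Mh(\partial T)\subset\Mh(T)$. In particular the chains $w_i\in\chain{k+1}(\Ih(f))$ and $z_i|_{\partial f}\in\chain{k}(\Ih(\partial f))$ that appear in \eqref{eq:formula} are supported inside $\Ih(T)$, and the scalar $\lambda_{g_k(F)}$ is one of the $\lambda_f$, $f\in\FM{k}(\Mh(T))$, counted on the right of \eqref{eq:local.bound}.

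I would then argue by an outer downward induction on $k\in\{n,\dots,0\}$ and, for fixed $k$, an inner upward induction on $d\in\{k,\dots,n\}$, the claim at $(k,d)$ being $\tilde\lambda_F^2\lesssim R_T$ for all $F\in\Pa{k}{d}\cap\FM{k}(\Ih(T))$. The diagonal steps $d=k$ (which include the base case $k=n$, where $\cobd{k}=0$ so $R_T=\sum_{f\in\FM{n}(\Mh(T))}\lambda_f^2$) are immediate: by \eqref{eq:top.dimension}--\eqref{eq:on.boundary}, $\tilde\lambda_F=\tfrac{|F|}{|f|}\lambda_f$ with $f=g_k(F)\in\FM{k}(\Mh(T))$, and $|F|\le|f|$ since $F\subset f$ are both $k$-dimensional, so $\tilde\lambda_F^2\le\lambda_f^2\le R_T$. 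For the inner step $d>k$, take $F\in\Pa{k}{d}\cap\FM{k}(\Ih(T))$ with $f=g_k(F)$; if $F$ is not one of the dual simplices $F_1,\dots,F_p$ then $\tilde\lambda_F=0$ by \eqref{eq:formula}, so I may assume $F=F_i$ and use $\tilde\lambda_{F_i}=\inner{\op{k+1}(\cobd{k}\lambda)}{w_i}-\inner{\tilde\lambda}{z_i|_{\partial f}}$, bounding the two terms separately by Cauchy--Schwarz.

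For the boundary term, the coefficient vector of $z_i|_{\partial f}$ is a subvector of that of $z_i$, so \eqref{eq:bound.zi} gives $\sum_{F''}(z_i|_{\partial f})_{F''}^2\lesssim 1$ and hence $\inner{\tilde\lambda}{z_i|_{\partial f}}^2\lesssim\sum_{F''\in\FM{k}(\Ih(\partial f))}\tilde\lambda_{F''}^2$; by Point~(ii) of Proposition~\ref{prop:properties} every such $F''$ belongs to $\Pa{k}{d'}$ with $d'<d$, so the inner induction hypothesis together with $\card(\FM{k}(\Ih(\partial f)))\lesssim 1$ give $\inner{\tilde\lambda}{z_i|_{\partial f}}^2\lesssim R_T$. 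For the coderivative term, \eqref{eq:bound.wi} gives $\sum_{F'}w_{i,F'}^2\lesssim 1$, so Cauchy--Schwarz and $\Ih(f)\subset\Ih(T)$ yield $\inner{\op{k+1}(\cobd{k}\lambda)}{w_i}^2\lesssim\sum_{F'\in\FM{k+1}(\Ih(T))}(\op{k+1}(\cobd{k}\lambda))_{F'}^2\simeq\sum_{S\in\FM{n}(\Ih(T))}\sum_{F'\in\FM{k+1}(\Ih(S))}(\op{k+1}(\cobd{k}\lambda))_{F'}^2$; applying the statement of the lemma at level $k+1$ to the cochain $\cobd{k}\lambda\in\cochain{k+1}(\Mh)$ (available by the outer induction hypothesis) bounds this by $\sum_{f'\in\FM{k+1}(\Mh(T))}(\cobd{k}\lambda)_{f'}^2+\sum_{f'\in\FM{k+2}(\Mh(T))}(\cobd{k+1}\cobd{k}\lambda)_{f'}^2$, whose second summand is zero because $\cobd{k+1}\circ\cobd{k}=0$; hence it is $\le R_T$. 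Combining the two bounds yields $\tilde\lambda_{F_i}^2\lesssim R_T$, which closes both inductions.

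The step I expect to be the main obstacle is getting the bookkeeping of the double recursion exactly right: one must check that each quantity entering the recursive formula \eqref{eq:formula} has \emph{already} been estimated --- the boundary contributions by the inner hypothesis at strictly smaller $d$ (thanks to Proposition~\ref{prop:properties}(ii)), and the coderivative contribution by the \emph{outer} hypothesis applied to $\cobd{k}\lambda$ at level $k+1$ --- and that, simultaneously, all cells and simplices involved remain inside $\Mh(T)$ and $\Ih(T)$, so that the level-$(k+1)$ right-hand side assembled over $T$ genuinely controls $\op{k+1}(\cobd{k}\lambda)$ on $\Ih(T)$ and the term over $\FM{k}(\Mh(T))$ absorbs the diagonal contributions. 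All the uniform constants come solely from mesh regularity (cardinalities of simplices and cells contained in a single $n$-cell) and from the uniform bounds \eqref{eq:bound.zi}--\eqref{eq:bound.wi} on the chains produced by the algorithm; no other ingredient is needed. A minor point to handle with care is the $k=0$ case, where the reduced-homology substitutions \eqref{eq:substitution.a}--\eqref{eq:substitution.b} modify $z_i$ (and thus $w_i$) only by the single $0$-cell $F^*\subset\partial f$, which leaves \eqref{eq:bound.zi}--\eqref{eq:bound.wi} and all of the above reasoning unchanged.
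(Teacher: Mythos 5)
Your proposal is correct and follows essentially the same argument as the paper: the same downward-in-$k$, upward-in-$d$ double induction mirroring the construction of $\op{k}$, the same Cauchy--Schwarz estimate applied to \eqref{eq:formula} via the uniform bounds \eqref{eq:bound.zi}--\eqref{eq:bound.wi}, and the same use of the outer induction hypothesis on $\cobd{k}\lambda$ combined with $\cobd{k+1}\cobd{k}=0$. The only cosmetic difference is that you reduce to a pointwise bound on each $\tilde\lambda_F$ (exploiting the uniformly bounded cardinalities) rather than estimating the aggregated sums $A_1$, $A_2$ directly as the paper does, and you make explicit the geometric fact that $g_j(F)\in\Mh(T)$ for $F\in\FM{j}(\Ih(T))$, which the paper uses implicitly.
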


\begin{proof}
  Thanks to mesh regularity of $\Mh$, we have $\card\left(\{ S \in \FM{n}(\Ih(T)) \;:\; F \subset S\}\right)\le \card(\FM{n}(\Ih(T))) \lesssim 1$ for each $k \in \{0, \dots, n\}$, and it follows that
  \begin{equation}\label{eq:Ik:estimate:basic}
    \sum_{S \in \FM{n}(\Ih(T))} \sum_{F\in\FM{k}(\Ih(S))} \tilde \lambda_F^2
    \simeq \sum_{F \in \FM{k}(\Ih(T))} \tilde \lambda_F^2
    = \underset{\eqcolon A_1}{\underbrace{\sum_{F \in \Pa{k}{k},\, F \subset T}  \tilde \lambda_F^2 }}
    + \underset{\eqcolon A_2}{\underbrace{\sum_{d = k+1}^\dtop \sum_{F \in \Pa{k}{d}, \, F \subset T} \tilde \lambda_F^2},}
  \end{equation}
  where, to conclude, we have used the partition $\FM{k}(\Ih) = \Pa{k}{k} \sqcup \left( \bigsqcup_{d = k + 1}^n \Pa{k}{d} \right)$ of Proposition~\ref{prop:properties}.
  We now focus on bounding the terms $A_1$ and $A_2$ by the right-hand side of \eqref{eq:local.bound}. This is done by induction on $k \in \{n,\ldots,0\}$.
  \medskip\\
  \emph{Base case $k = n$.}
  In this case, $A_2=0$ and we just have to bound $A_1$.
  Applying the definition of $\tilde \lambda_S$ given by \eqref{eq:top.dimension}, and observing that $\frac{|F|}{|f|} \le 1$ for each $F\in\FM{n}(\Ih(f))$, we have
  $$
  \sum_{F \in \FM{n}(\Ih(f))} \left(\frac{|F|}{|f|} \right)^2\le
  \sum_{F \in \FM{n}(\Ih(f))} \frac{|F|}{|f|} = 1,
  $$
  from which we readily infer, using $\card(\FM{n}(\Ih(f))) \lesssim 1$ (consequence of mesh regularity),
  \begin{equation*}
    A_1 = \sum_{F \in \FM{n}(\Ih(f))} \left(\frac{|F|}{|f|} \right)^2 \lambda_{{f}}^2 \lesssim \lambda_{f}^2.
  \end{equation*}
  This yields \eqref{eq:local.bound} since $\FM{n}(\Mh(f))=\{f\}$.
  \medskip\\
  \emph{Induction step $k \in \{n-1, \dots, 0\}$.}
  Assume that the inequality \eqref{eq:local.bound} holds for $k+1$ and let us prove it for $k$.
  \smallskip\\
  \emph{Term $A_1$.} Using the definition of $\tilde \lambda_F$ given by formula \eqref{eq:on.boundary} and reasoning as above, we have
  \begin{equation}
    \label{eq:conclusion.a1.b}
    A_1 = \sum_{f \in \FM{k}(\Mh(T))} \sum_{F \in \FM{k}(\Ih(f))} \left(\frac{|F|}{|f|} \right)^2 \lambda_{f}^2 \lesssim \sum_{f\in\FM{k}(\Mh(T))} \lambda_{f}^2.
  \end{equation}
  \smallskip\\
  \emph{Term $A_2$.}
  Let $f \in \FM{d}(\Mh(T))$ for $d \in \{k+1, \dots, n\}$.
  Let $\tilde \zeta = (\tilde \zeta_F)_{F \in \FM{k}(\Ih(f))} \in \cochain{k}(\Ih(f))$ and $t= \left(t_F\right)_{F \in \FM{k}(\Ih(f))} \in \chain{k}(\Ih(f))$.
Using a Cauchy--Schwarz inequality, we see that
  \begin{equation}
    \label{eq:cochain.bound}
    |\inner{\tilde \zeta}{t}|^2
    =\left| \sum_{F \in \FM{k}(\Ih(f))} t_F \, \tilde \zeta_F \right|^2
\lesssim 
     \sum_{F\in\FM{k}(\Ih(f))}t_F^2 \times \sum_{F \in \FM{k}(\Ih(f))} \tilde \zeta_F^2.
  \end{equation}

  Let $F \in \FM{k}(\Ih(f)) \cap \Pa{k}{d}$ and recall the definition \eqref{eq:formula} of $\tilde \lambda_F$.
  Set $\tilde \xi\coloneq \op{k+1}(\cobd{k}\lambda)$, 
  apply the estimate \eqref{eq:cochain.bound} to both terms $\inner{\tilde \lambda}{z_i|_{\partial f}}$ and $\inner{\tilde \xi}{w_i}$, and recall the bounds \eqref{eq:bound.zi} and \eqref{eq:bound.wi} on $z_i$ and $w_i$.
  This leads to
  \begin{equation}
    \label{eq:recursive.a}
    \sum_{F \in \Pa{k}{d}, \, F \subset f} \tilde \lambda_{F}^2
    \lesssim \sum_{F \in \FM{k}(\Ih(\partial f))} \tilde \lambda_{F}^2
    +\sum_{F\in\FM{k+1}(\Ih(f))} \tilde \xi_{F}^2 \qquad \forall f \in \FM{d}(\Mh(T)).
  \end{equation}
  Observe that, by Point (ii) in Proposition \ref{prop:properties}, the first term in the right-hand side of \eqref{eq:recursive.a} can be equivalently written as
  \[
  \sum_{d' = k}^{d-1}
  \sum_{f' \in \FM{d'}(\Mh(f))} \left( \sum_{F \in \Pa{k}{d'}, \,F \subset f'} \tilde\lambda_{F}^2 \right),
  \]
  and, in the case $d=k+1$, it holds for this term
  \[
  \sum_{f' \in \FM{k}(\Mh(f))} \left( \sum_{F \in \Pa{k}{k}, \,F \subset f'} \tilde\lambda_{F}^2 \right)
  \overset{\eqref{eq:on.boundary}}\lesssim
  \sum_{f'\in\FM{k}(\Mh(f))} \lambda_{f'}^2.
  \]
  Hence, via a simple induction argument on the dimension $d$ involving the bound \eqref{eq:recursive.a} and the mesh regularity assumption (to uniformly bound the cardinalities of $\FM{k}(\Mh(f))$ and $\FM{k}(\Ih(f))$), we conclude
  \begin{equation}
    \label{eq:conclusion.a2.a}
    A_2
    = \sum_{d = k+1}^\dtop  \sum_{f \in \FM{d}(\Mh(T))} \left( \sum_{F \in \Pa{k}{d},\, F \subset f} \tilde \lambda_F^2 \right)
    \lesssim \sum_{f \in \FM{k}(\Mh(T))} \lambda_{f}^2
    +\sum_{F\in\FM{k+1}(\Ih(T))} \tilde \xi_{F}^2.
  \end{equation}

  Recalling that $\tilde{\xi}=\op{k+1}(\cobd{k}\lambda)\in\cochain{k+1}(\Mh)$ we can apply the induction hypothesis for $k+1$ with $\lambda$ replaced by $\cobd{k}\lambda$, which states that this cochain satisfies \eqref{eq:local.bound}.
  Since $\cobd{k+1}(\cobd{k}\lambda)=0$ by complex property, this gives
  \begin{equation}
    \label{eq:conclusion.a2.b}
    \sum_{F \in \FM{k+1}(\Ih(T))} \tilde \xi_{F}^2
    \lesssim \sum_{f\in\FM{k+1}(\Mh(T))} (\cobd{k}\lambda)_f^2.
  \end{equation}
  Using \eqref{eq:conclusion.a1.b} to estimate $A_1$ and plugging \eqref{eq:conclusion.a2.b} into \eqref{eq:conclusion.a2.a} to estimate $A_2$ in \eqref{eq:Ik:estimate:basic} concludes the proof of the induction step.
\end{proof}

\subsubsection{Left inverse}

For $k \in \{0, \dots, n\}$, let $(\iop{k})_k$ be the graded map with $\iop{k} : \cochain{k}(\Ih) \to \cochain{k}(\Mh)$ such that, for any $\tilde \lambda \in \cochain{k}(\Ih)$, it holds
\begin{equation}
  \label{eq:reconstruction}
  \inner{\iop{k}(\tilde \lambda)}{f} \coloneqq \sum_{F \in \FM{k}(\Ih(f))} \inner{\tilde \lambda}{F} \qquad \forall f \in \FM{k}(\Mh).
\end{equation}

\begin{lemma}[Left inverse of $\op{k}$]
  \label{lem:reconstruction}
  The graded map $(\iop{k})_k$ satisfies the following properties:
  \begin{enumerate}[label=(\roman*)]
  \item $\iop{k}$ is a left inverse of $\op{k}$;
  \item $(\iop{k})_k$ is a cochain map;
  \item For all $T\in \FM{n}(\Mh)$ and $\tilde\lambda = (\tilde \lambda_F)_{F \in \FM{k}(\Ih)}\in \cochain{k}(\Ih)$, setting $\lambda = (\lambda_f)_{f \in \FM{k}(\Mh)}\coloneqq \iop{k}(\tilde\lambda) \in \cochain{k}(\Mh)$, the following bound holds:
    \begin{equation}
      \label{eq:local.bound.b}
      \sum_{f\in\FM{k}(\Mh(T))} \lambda_{f}^2
      \lesssim  \sum_{S \in \FM{n}(\Ih(T))} \sum_{F\in\FM{k}(\Ih(S))} \tilde \lambda_{F}^2.
    \end{equation}
  \end{enumerate}
\end{lemma}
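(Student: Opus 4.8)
The plan is to establish the three items in turn; each one reduces to an elementary computation with the defining formula \eqref{eq:reconstruction}, combined with the orientation convention on $\Pa{k}{k}$ and the fact that a $k$-cell is tiled (up to measure zero) by the $k$-simplices of its simplicial submesh.

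For item (i), I would fix $\lambda \in \cochain{k}(\Mh)$ and $f \in \FM{k}(\Mh)$. Any $F \in \FM{k}(\Ih(f))$ is a $k$-simplex contained in the $k$-cell $f$; since $\partial f$ is $(k-1)$-dimensional it cannot contain $F$, so $F \not\subset \partial f$, hence $g_k(F) = f$ and $F \in \Pa{k}{k}$. Formula \eqref{eq:on.boundary} (or \eqref{eq:top.dimension} when $k = n$) then gives $\inner{\op{k}(\lambda)}{F} = \frac{|F|}{|f|}\inner{\lambda}{f}$, and substituting into \eqref{eq:reconstruction} yields
\[
\inner{\iop{k}(\op{k}(\lambda))}{f} = \inner{\lambda}{f}\sum_{F\in\FM{k}(\Ih(f))}\frac{|F|}{|f|} = \inner{\lambda}{f},
\]
since $\sum_{F\in\FM{k}(\Ih(f))}|F| = |f|$. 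As $f$ is arbitrary, $\iop{k}\circ\op{k} = \mathrm{id}$.

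For item (ii), the key observation is that \eqref{eq:reconstruction} exhibits $\iop{k}$ as the transpose of the subdivision map $R_k\colon\chain{k}(\Mh)\to\chain{k}(\Ih)$, $R_k f \coloneqq \sum_{F\in\FM{k}(\Ih(f))}F$ (each $F\in\Pa{k}{k}$ carrying the orientation of $g_k(F)$), in the sense that $\inner{\iop{k}(\tilde\lambda)}{f} = \inner{\tilde\lambda}{R_k f}$ for all $f\in\FM{k}(\Mh)$. By \eqref{eq:def.cobd}, the desired commutation $\cobd{k}\iop{k} = \iop{k+1}\cobd{k}$ is therefore equivalent, after dualising, to the chain-map identity $R_k\circ\bd{k+1} = \bd{k+1}\circ R_{k+1}$ on $\chain{k+1}(\Mh)$. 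By linearity it suffices to check this on a $(k+1)$-cell $f$, where it states that the simplicial boundary of $R_{k+1}f$ equals $\sum_{f'\in\FM{k}(\Mh(\partial f))}\orffp\,R_k f'$: interior $k$-simplices of $\Ih(f)$ are shared by exactly two $(k+1)$-simplices with opposite relative orientations and cancel, while a $k$-simplex lying in some $f'\in\FM{k}(\Mh(\partial f))$ is a face of exactly one $(k+1)$-simplex, with relative orientation $\orffp$ by the orientation convention. This is precisely the identity \eqref{eq:zq}--\eqref{eq:zq'} already used in the proof of Lemma~\ref{lem:cochain.map}, so it can be invoked rather than redone; transposing back then gives (ii).

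For item (iii), I would fix $T\in\FM{n}(\Mh)$. For each $f\in\FM{k}(\Mh(T))$, a Cauchy--Schwarz inequality applied to \eqref{eq:reconstruction} gives $\lambda_f^2 \le \card(\FM{k}(\Ih(f)))\sum_{F\in\FM{k}(\Ih(f))}\tilde\lambda_F^2 \lesssim \sum_{F\in\FM{k}(\Ih(f))}\tilde\lambda_F^2$, using mesh regularity to bound the number of $k$-simplices in a cell. Summing over $f\in\FM{k}(\Mh(T))$ and using that distinct $k$-cells of $\Mh$ have disjoint relative interiors — so each $F\in\FM{k}(\Ih(T))$ belongs to at most one such $f$ — the resulting double sum is bounded by $\sum_{F\in\FM{k}(\Ih(T))}\tilde\lambda_F^2$, which is in turn $\simeq \sum_{S\in\FM{n}(\Ih(T))}\sum_{F\in\FM{k}(\Ih(S))}\tilde\lambda_F^2$ by mesh regularity, exactly as in \eqref{eq:Ik:estimate:basic}; this is \eqref{eq:local.bound.b}. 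The only step requiring any care is the orientation and cancellation bookkeeping in (ii), and since it coincides with \eqref{eq:zq}--\eqref{eq:zq'} no genuine obstacle is expected.
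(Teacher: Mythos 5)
Your argument is correct and follows the paper's proof: item~(i) is the same one-line computation via $\sum_{F\in\FM{k}(\Ih(f))}|F|/|f|=1$, item~(iii) fills in the Cauchy--Schwarz and mesh-regularity details that the paper dismisses as ``a direct consequence of the definition \eqref{eq:reconstruction} and of the regularity assumption'', and item~(ii) establishes the same orientation/cancellation identity. Your reframing of (ii) via the subdivision chain map $R_k$ and its transpose, with a citation of \eqref{eq:zq}--\eqref{eq:zq'}, is a slightly more structural packaging of exactly the computation the paper carries out directly (pairing the two $(k+1)$-simplices adjacent to each interior $k$-simplex to show $\sum_{F}\sum_{F'\not\subset\partial f}\epsilon_{FF'}\tilde\lambda_{F'}=0$), so it counts as essentially the same approach.
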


\begin{proof}
  (i) It suffices to write, for all $f\in\FM{k}(\Mh)$,
  \begin{equation*}
    \inner{\iop{k}(\op{k}(\lambda))}{f}
    \overset{\eqref{eq:reconstruction}}=\sum_{F \in \FM{k}(\Ih(f))} \inner{\op{k}(\lambda)}{F}
    \overset{\eqref{eq:top.dimension}, \eqref{eq:on.boundary}}=\inner{\lambda}{f},
  \end{equation*}
  where we have additionally noticed that $g_k(F) = f$ for all $F \in \FM{k}(\Ih(f))$ in the last step.
  \medskip\\
  (ii) Let $\tilde\lambda$ and $\lambda$ as in Point (iii) and set
  $\tilde \xi \coloneqq \cobd{k} \tilde \lambda$ and $\xi \coloneqq \iop{k+1} (\tilde \xi)$.
  We have to show that $\cobd{k} \lambda=\xi$.
  This is done by writing, for any $f \in \FM{k+1}(\Mh)$,
  \begin{equation*}
    \begin{aligned}
      \inner{\cobd{k} \lambda}{f} \overset{\eqref{eq:def.cobd},\eqref{eq:def_bd},\eqref{eq:reconstruction}}&= \sum_{f'\in \FM{k}(\Mh(\partial f))} \orffp \sum_{F'\in\FM{k}(\Ih(f'))} \inner{\tilde \lambda}{F'}
      \\
      &= \sum_{F \in \FM{k+1}(\Ih(f))} \sum_{F' \in \FM{k}(\Ih(\partial F))} \epsilon_{FF'} \inner{\tilde \lambda}{F'}
      \\
      \overset{\eqref{eq:def_bd}, \eqref{eq:def.cobd}}&= \sum_{F \in \FM{k+1}(\Ih(f))} \inner{\tilde \xi}{F}
      \\
      \overset{ \eqref{eq:reconstruction}}&= \inner{\xi}{f},
    \end{aligned}
  \end{equation*}
  where, in the second equality, we have used the assumption that the orientation on $\Ih$ is the one induced by the maps $g_\bullet:\Ih\to\Mh$ (which implies $\epsilon_{FF'}=\orffp$ if $f=g_{k+1}(F)$ and $f'=g_k(F')$ in these sums), together with the relation
  \[
  \sum_{F \in \FM{k+1}(\Ih(f))} \sum_{F' \in \FM{k}(\Ih(\partial F)),\, F' \not\subset \partial f} \epsilon_{FF'} \tilde \lambda_{F'}=0,
  \]
  which translates the fact that contributions on $k$-simplices that do not lie on the boundary of $f$ cancel out since $\epsilon_{F_1F'}+\epsilon_{F_2F'}=0$ whenever $F_1,F_2$ are the $(k+1)$-simplices in $\Ih(f)$ on each side of $F'$.
  \medskip\\
  (iii) Is a direct consequence of the definition \eqref{eq:reconstruction} and of the regularity assumption on $\Mh$.
\end{proof}

\subsection{Proof of the Poincar\'e inequality for cochains}\label{sec:proof.poincare.cochains}

We can now prove Lemma \ref{lemma:topological.bound}, which we recall here for the sake of legibility.

\poincare*

\begin{proof}
  Let $\theta \in \cochain{k}(\Mh)$ be such that $\cobd{k} \theta = \xi$, and consider the following extensions to simplicial cochains:
  \begin{equation}\label{eq:topological.bound:shortcuts}
    \text{%
      $\tilde \xi \coloneq \op{k+1}(\xi)$ and $\tilde \theta \coloneqq \op{k}(\theta)$.
    }
  \end{equation}
  By Lemma \ref{lem:cochain.map}, $(I^k)_k$ is a cochain map so we have
  \begin{equation}
    \label{eq:passage}
    \cobd{k} \tilde \theta = \cobd{k}\op{k}(\theta) = \op{k+1} (\cobd{k} \theta) = \tilde \xi.
  \end{equation}
  For $k \in \{0, \dots n\}$, denote by $R_h^k : \mathfrak W_k(\Ih) \to \cochain{k}(\Ih)$ the \emph{De Rham map} restricted to the space $\mathfrak W^k(\Ih)$ of Whitney $k$-forms (see \cite{Arnold:18} and Appendix \ref{section:whitney}), and by $W_h^k : \cochain{k}(\Ih) \to \mathfrak W_k(\Ih)$ the \emph{Whitney map} on the simplicial complex $\Ih$ \cite{Dodziuk:76}.
  For each $\tilde \zeta = (\tilde \zeta_F)_{F \in \FM{k}(\Ih)} \in \cochain{k}(\Ih)$, $W_h^k(\tilde \zeta)$ is explicitly written as
  \begin{equation}\label{eq:def.psi_h}
    W_h^k(\tilde \zeta) \coloneqq \sum_{S\in\FM{\dtop}(\Ih)} \sum_{F\in\FM{k}(\Ih(S))}  \tilde \zeta_F \phi^k_{S,F}.
  \end{equation}
  Crucially, the graded map $(W_h^k)_k$ is a cochain isomorphism between the simplicial cochain complex and the Whitney form complex supported on $\Ih$, and $R_h^k$ is the inverse of $W_h^k$.

  Let
  \begin{equation}\label{eq:topological.bound:.txi.tth}
    \text{%
      $\tilde\xi_h \coloneqq W_h^{k+1}(\tilde \xi)$\quad and\quad
      $\tilde\theta_h \coloneqq W_h^k(\tilde \theta)$.
    }
  \end{equation}
  Thanks to the cochain map property of $W_h^k$, we have $\DIFF^k \tilde\theta_h = \DIFF^k W_h^k(\tilde \theta) = W_h^{k+1}(\cobd{k} \tilde \theta) \overset{\eqref{eq:passage}}= \tilde\xi_h$.
  Therefore, $\tilde\xi_h \in \Image \DIFF^k$. Invoking the continuous Poincaré inequality \cite[Theorem~5.11]{Arnold.Falk.ea:06}, then the proof of Corollary \ref{cor:Poincaré} to retrieve a formulation of this inequality in the spirit of Theorem \ref{thm:Poincare}, we infer the existence of $\tilde\lambda_h \in \mathfrak W_k(\Ih)$ such that
  \begin{equation} \label{eq:prop.psi.phi}
    \DIFF^k \tilde\lambda_h = \tilde\xi_h \quad\text{ and } \quad\norm{L^2 \Lambda^k(\Omega)}{\tilde\lambda_h} \lesssim \norm{L^2 \Lambda^{k+1}(\Omega)}{\tilde\xi_h}.
  \end{equation}
  Letting $\tilde \lambda \coloneqq R_h^k(\tilde\lambda_h)$ and using the cochain map property of $R_h^k$, we find
  \begin{equation}
    \label{eq:def.lambda}
    \cobd{k} \tilde \lambda
    = \cobd{k} R_h^k(\tilde\lambda_h) = R_h^{k+1} ( \DIFF^k  \tilde\lambda_h)
    \overset{\eqref{eq:prop.psi.phi}}= R_h^{k+1} \tilde \xi_h
    \overset{\eqref{eq:topological.bound:.txi.tth}}= R_h^{k+1} W_h^{k+1}(\tilde \xi) = \tilde \xi,
  \end{equation}
  where the conclusion follows since $R_h^{k+1}$ is the inverse of $W_h^{k+1}$.
  Letting $\lambda \coloneqq \iop{k}(\tilde \lambda)$, we then have
  \begin{equation*}
    \cobd{k} \lambda
    = \cobd{k}\iop{k}(\tilde \lambda)
    = \iop{k+1} ( \cobd{k} \tilde \lambda)
    \overset{\eqref{eq:def.lambda}}= \iop{k+1} \tilde \xi
    \overset{\eqref{eq:topological.bound:shortcuts}}= \iop{k+1} I^{k+1}(\xi)
    = \xi,
  \end{equation*}
  where we have used, respectively, Points (ii) and (i) in Lemma \ref{lem:reconstruction} to infer the second and last equalities.

  It remains to prove that $\lambda = (\lambda_f)_{f \in \FM{k}(\Mh)}$ satisfies \eqref{eq:topological.bound}. Decomposing $\tilde \lambda_h=W_h^k(\tilde\lambda)$ according to \eqref{eq:def.psi_h}, noticing that $\sum_{S \in \FM{n}(\Ih)} \bullet = \sum_{T \in \FM{n}(\Mh)} \sum_{S \in \FM{n}(\Ih(T))} \bullet$, and observing that each $\phi_{S,F}$ is only supported in $S$, we write (equations \eqref{eq:Whitney.topo} and \eqref{eq:Whitney.norm} are stated and proved in Appendix~\ref{section:whitney})
  \begin{equation}\label{eq:norm.phi_h}
    \begin{aligned}
      \norm{L^2 \Lambda^k(\Omega)}{\tilde\lambda_h}^2
      &= \sum_{T\in\FM{\dtop}(\Mh)} \sum_{S\in\FM{\dtop}(\Ih(T))} \Norm{S}{\sum_{F\in\FM{k}(\Ih(S))}\tilde\lambda_F \phi^k_{S,F}}^2 \\
      \overset{\text{\eqref{eq:Whitney.topo}}}&\simeq \sum_{T\in\FM{\dtop}(\Mh)} \sum_{S\in\FM{\dtop}(\Ih(T))} \sum_{F\in\FM{k}(\Ih(S))}\tilde\lambda_F^2 \norm{S}{\phi^k_{S,F}}^2 \\
      \overset{\text{\eqref{eq:Whitney.norm}}}&\simeq \sum_{T\in\FM{\dtop}(\Mh)} \sum_{S\in\FM{\dtop}(\Ih(T))} \sum_{F\in\FM{k}(\Ih(S))}\tilde\lambda_F^2 h_S^{\dtop - 2k} \\
      \overset{\eqref{eq:local.bound.b}}&\gtrsim \sum_{T\in\FM{\dtop}(\Mh)} h_T^{\dtop-2k}\sum_{f\in\FM{k}(\Mh(T))}\lambda_f^2,
    \end{aligned}
  \end{equation}
  where, in the last passage, we have used the mesh regularity condition to write $h_S \simeq h_T$ for all $S\in\FM{\dtop}(\Ih(T))$.
  Similarly, we have
  \begin{equation}\label{eq:norm.psi_h}
    \begin{aligned}
      \norm{L^2 \Lambda^k(\Omega)}{\tilde\xi_h}^2
      \overset{\eqref{eq:def.psi_h}}&= \sum_{T\in\FM{\dtop}(\Mh)} \sum_{S\in\FM{\dtop}(\Ih(T))} \Norm{S}{\sum_{F\in\FM{k+1}(\Ih(S))}\tilde\xi_F \phi^{k+1}_{S,F}}^2 \\
      \overset{\text{\eqref{eq:Whitney.topo}}}&\simeq \sum_{T\in\FM{\dtop}(\Mh)} \sum_{S\in\FM{\dtop}(\Ih(T))} \sum_{F\in\FM{k+1}(\Ih(S))}\tilde\xi_F^2 \norm{S}{\phi^{k+1}_{S,F}}^2 \\
      \overset{\text{\eqref{eq:Whitney.norm}}}&\simeq \sum_{T\in\FM{\dtop}(\Mh)} \sum_{S\in\FM{\dtop}(\Ih(T))} \sum_{F\in\FM{k+1}(\Ih(S))}\tilde\xi_F^2 h_S^{\dtop - 2k-2} \\
      \overset{\eqref{eq:local.bound}}&\lesssim \sum_{T\in\FM{\dtop}(\Mh)} h_T^{\dtop-2k-2}\sum_{f\in\FM{k+1}(\Mh(T))}\xi_f^2,
    \end{aligned}
  \end{equation}
  where the conclusion follows from \eqref{eq:local.bound} applied to $\xi$, after noticing that $\cobd{k+1} \xi = \cobd{k+1} \cobd{k} \theta = 0$.

  The estimate \eqref{eq:topological.bound} follows combining \eqref{eq:prop.psi.phi}, \eqref{eq:norm.phi_h} and \eqref{eq:norm.psi_h}.
\end{proof}


\section{Proof of the main result}\label{sec:proof.poincare}

We are now ready to prove Theorem~\ref{thm:Poincare}.

\begin{proof}
  Let $\ul\omega_h\in \uH{k}{h}$ and set $\ul\sigma_h \coloneq \ul\DIFF^k_{r,h}\ul\omega_h$.
  We will construct $\ul\tau_h \in \uH{k}{h}$ such that $\ul\DIFF^k_{r,h}\ul\tau_h = \ul\sigma_h$
  and $\opn{h}{\ul\tau_h} \lesssim \opn{h}{\ul\sigma_h}$.
  \medskip\\
  \emph{(i) Construction of $\ul{\tau}_h$ on $k$-cells.}
  For $f\in\FM{k+1}(\Mh)$, we notice that
  \[
  \int_f \sigma_f\overset{\eqref{eq:def.ulDIFF}}=\int_f \star^{-1}\ltproj{0}{f}\star \DIFF^k_{r,f} \ul{\omega}_f=\int_f \DIFF_{r,f}^k\ul\omega_f,
  \]
  where the last equality follows from $\PLtrim{r}{0}(f)=\PL{r}{0}(f)$ together with $\star \DIFF^k_{r,f} \ul{\omega}_f\in\PL{r}{0}(f)$.
  The discrete Stokes formula \eqref{eq:def.d.int} with $\mu_f = 1\in\PL{r}{0}(f)$ and the definition \eqref{eq:def.P.d=k} then yield
  \[
  \int_f \sigma_f= \sum_{f'\in\FM{k}(\Mh(f))} \orffp \int_{f'} \omega_{f'}.
  \]
  This shows that the $(k+1)$-cochain $\xi \coloneq (\int_f \sigma_f)_{f \in \FM{k+1}(\Mh)} \in \cochain{k+1}(\Mh)$ is a coboundary (precisely, the coboundary of the $k$-cochain with coefficients $(\int_{f'}\omega_{f'})_{f' \in \FM{k}(\Mh)}$). Hence, by Lemma \ref{lemma:topological.bound} there exists a $k$-cochain $\lambda = (\lambda_{f'})_{f'\in\FM{k}(\Mh)}$ satisfying \eqref{eq:topological.bound} and such that $\cobd{k}\lambda=\xi$, which translates into: for all $f\in\FM{k+1}(\Mh)$, 
  \begin{equation} \label{eq:Pc.def0}
    \int_f \sigma_{f} =\sum_{f'\in\FM{k}(\Mh(\partial f))}\orffp\int_{f'}\tau_{f'},
  \end{equation}
  where we have defined
  \begin{equation}\label{eq:choice.star.tauf.0}
    \tau_{f'} \coloneq \star^{-1}\frac{\lambda_{f'}}{\vert f' \vert}.
  \end{equation}
  \medskip\\
  \emph{(ii) Construction of $\ul{\tau}_h$ on $d$-cells for $d \in \{ k+1, \dots, n\}$.}
  The construction is done by induction on $d$ (noticing that $d=k$ has already been done), by following the ideas in \cite[Lemma 27]{Bonaldi.Di-Pietro.ea:25}.
  For $d \in \{k+1, \dots, \dtop\}$ and $f\in\FM{d}(\Mh)$, assuming that $(\ul\tau_{f'})_{f'\in\FM{d-1}(f)}$ have been constructed,
  we define
  \begin{equation}\label{eq:choice.star.tauf}
    \tau_f \in\star^{-1}\DIFF\PL{r}{d - k - 1}(f)\subset\star^{-1}\PLtrim{r}{d-k}(f)
  \end{equation}
  such that, for all $\mu_f \in \KOSZUL\PL{r-1}{d-k}(f)$,
  \begin{equation} \label{eq:Pc.def1}
    (-1)^{k+1} \langle\star\tau_f, \DIFF\mu_f\rangle_f
    =  \langle\star\sigma_f, \mu_f\rangle_f
    -  \langle\star P^k_{r,\pf} \ul\tau_\pf, \tr_\pf \mu_f \rangle_\pf.
  \end{equation}
  The existence and uniqueness of $\tau_f$ follows from the fact that $\DIFF\st\KOSZUL\PL{r-1}{d-k}(f)\to\DIFF\PL{r}{d-k-1}(f)$ is an isomorphism.
  \medskip\\
  \emph{(iii) Proof that  $\ul\DIFF^k_{r,h}\ul\tau_h = \ul\sigma_h$.}
  Let $f\in\FM{d}(\Mh)$ for some $d \in \{ k+1, \dots, n\}$.
  Plugging \eqref{eq:Pc.def1} into the definition \eqref{eq:def.d} of the discrete exterior derivative for $\ul\tau_f$ we have, for all $\mu_f \in \KOSZUL\PL{r-1}{d-k}(f) \subset \PL{r}{d -k- 1}(f)$,
  \begin{equation}\label{eq:Pc.c.1}
    \langle \star \DIFF^k_{r,f} \ul\tau_f , \mu_f\rangle_f = \langle\star\sigma_f, \mu_f \rangle_f.
  \end{equation}

  We now prove that $\ul\DIFF^k_{r,f}\ul\tau_f = \ul\sigma_f$ by induction on the dimension $d$ of $f$.
  Let us assume first that $d=k+1$ and apply the definition \eqref{eq:def.d} of $\DIFF^k_{r,f} \ul\tau_f$ with a test function $\alpha_f\in\PL{0}{0}(f)$ to get
  \begin{equation}\label{eq:Pc.c.1.1}
    \langle \star \DIFF^k_{r,f} \ul\tau_f , \alpha_f\rangle_f =\langle \star \tau_{\pf} , \alpha_f\rangle_{\pf}\overset{\eqref{eq:Pc.def0}}=\langle\star \sigma_f,\alpha_f\rangle_f.
  \end{equation}
  Adding together \eqref{eq:Pc.c.1} and \eqref{eq:Pc.c.1.1} shows that the projections of $\star \DIFF^k_{r,f} \ul\tau_f$ and $\star \sigma_f$ on $\KOSZUL\PL{r-1}{1}(f)+\PL{0}{0}(f)$ coincide.
  Since this space is $\PL{r}{0}(f)$ (cf. \cite[Equation (2.7a)]{Bonaldi.Di-Pietro.ea:25}), which contains both $\star \DIFF^k_{r,f} \ul\tau_f$ and $\star \sigma_f$, this shows that $\DIFF^k_{r,f} \ul\tau_f=\sigma_f$, that is, that $\ul\DIFF_{r,h}^k\ul\tau_h$ and $\ul\sigma_h$ have the same component on $f$.

  We now consider the case $d \in \{ k+2, \dots, n\}$.
  Since $\DIFF^{k+1}_{r,f}\ul\sigma_f = \DIFF^{k+1}_{r,f}\ul\DIFF_{r,f}^k\ul\omega_f= 0$ (see \cite[Eq.~(3.32)]{Bonaldi.Di-Pietro.ea:25}), we have,
  for all $\zeta_f \in \KOSZUL\PL{r-1}{d -k-1}(f)\subset\PLtrim{r}{d-k-2}(f)$,
  \begin{equation}\label{eq:Pc.c.0}
    \begin{aligned}
      0 \overset{\eqref{eq:def.d}}&=
      (-1)^{k+2} \langle \star \sigma_f , \DIFF \zeta_f\rangle_f
      + \langle\star P^{k+1}_{r,\pf} \ul{\sigma}_\pf , \tr_\pf \zeta_f \rangle_\pf
      \\
      \overset{\eqref{eq:projP}}&=
      (-1)^{k+2} \langle \star \sigma_f , \DIFF \zeta_f\rangle_f
      + \langle\star {\sigma}_\pf , \tr_\pf \zeta_f \rangle_\pf,
    \end{aligned}
  \end{equation}
  where we have used the fact that
  $\tr_{f'} \zeta_f \in \PLtrim{r}{d -k- 2}(f')$ for all $f' \in \FM{d-1}(f)$ (cf. \cite[Lemma~4]{Bonaldi.Di-Pietro.ea:25}) to use \eqref{eq:projP} (with $(k+1,\ul\sigma_{f'})$, for $f'\in\FM{d-1}(\Mh(\partial f))$, instead of $(k,\ul\omega_f)$).
  Since $\zeta_f\in \PLtrim{r+1}{d-k-2}(f)$, we can apply the link between discrete exterior derivatives on subcells \cite[Lemma 22]{Bonaldi.Di-Pietro.ea:25} to find
  \begin{equation} \label{eq:Pc.c.2}
    \langle \star \DIFF^k_{r,f} \ul\tau_f , \DIFF \zeta_f \rangle_f
    = (-1)^{k+1} \langle \star \underbrace{\DIFF^k_{r,\pf} \ul\tau_\pf}_{\sigma_{\pf}}, \tr_\pf \zeta_f \rangle_\pf
    \overset{\eqref{eq:Pc.c.0}}=\langle \star \sigma_f , \DIFF \zeta_f\rangle_f,
  \end{equation}
  where the equality $\DIFF^k_{r,\pf} \ul\tau_\pf=\sigma_{\pf}$ comes from the induction hypothesis and the link \cite[Eq.~(3.31)]{Bonaldi.Di-Pietro.ea:25} between the potential reconstruction and the local discrete exterior derivative.
  Since $\PLtrim{r}{d-k-1}(f)=\DIFF(\KOSZUL\PL{r-1}{d -k - 1}(f))+ \KOSZUL\PL{r-1}{d -k}(f)$ (see, e.g., \cite[Eq.~(2.16)]{Bonaldi.Di-Pietro.ea:25}
  and recall that $d\ge k+2$ here), adding together \eqref{eq:Pc.c.1} and \eqref{eq:Pc.c.2}
  shows that $\ltproj{d-k-1}{f}\star\DIFF^k_{r,f} \ul\tau_f=\ltproj{d-k-1}{f}\star\sigma_f$,
  i.e., that $\ul\DIFF^k_{r,f}\ul\tau_h$ and $\ul\sigma_h$ have the same component on $f$ (see \eqref{eq:def.ulDIFF}).
  \medskip\\
  \emph{(iv) Proof that $\opn{h}{\ul\tau_h} \lesssim \opn{h}{\ul\sigma_h}$}.
  Let $f\in\FM{d}(\Mh)$ with $d \in \{ k+1, \dots, n\}$. By \eqref{eq:choice.star.tauf} we have $\star\tau_f\in\DIFF\PL{r}{d-k-1}(f)=\DIFF\KOSZUL\PL{r-1}{d-k}(f)$ (see \cite[Eq.~(2.8)]{Bonaldi.Di-Pietro.ea:25}). We can therefore use Lemma \ref{lemma:local.Poincare} to find $\mu_f \in \KOSZUL\PL{r-1}{d-k}(f)$ such that
  $\DIFF \mu_f = \star \tau_f$ and
  \begin{equation}\label{eq:mu.preimage.tau}
    \norm{f}{\mu_f}\lesssim h_f \norm{f}{\star\tau_f}.
  \end{equation}
  Using this $\mu_f$ in \eqref{eq:Pc.def1} and applying Cauchy--Schwarz inequalities, we have
  \begin{equation*}
    \begin{aligned}
      \norm{f}{\star\tau_f}^2
      \lesssim{}& \norm{f}{\star\sigma_f}\norm{f}{\mu_f}
      + \sum_{f'\in \FM{d-1}(\Mh(\partial f))} \norm{f'}{\star P^k_{r,f'} \ul\tau_{f'}}\norm{f'}{\mu_f}\\
      \overset{\eqref{eq:trace.discrete}}{\lesssim}{}& \norm{f}{\star\sigma_f}\norm{f}{\mu_f}
      + h_f^{\frac12}\sum_{f'\in\FM{d-1}(\Mh(\partial f))} \norm{f'}{\star P^k_{r,f'} \ul\tau_{f'}}~h_f^{-1}\norm{f}{\mu_f}\\
      \overset{\eqref{eq:mu.preimage.tau}}{\lesssim}{}& \left( h_f \norm{f}{\star\sigma_f}
      + h_f^{\frac12}\sum_{f'\in\FM{d-1}(\Mh(\partial f))} \norm{f'}{\star P^k_{r,f'} \ul\tau_{f'}} \right) \norm{f}{\star\tau_f}.
    \end{aligned}
  \end{equation*}
  Using \eqref{eq:Pb.m1} (with $f'$ instead of $f$) together the fact that $\star$ is an isometry, squaring the result and using $\card(\FM{d-1}(\Mh(\partial f)))\lesssim 1$ by mesh regularity assumption, we infer
  \begin{equation}\label{eq:Pc.b.2}
    \norm{f}{\tau_f}^2 \lesssim h_f^2 \norm{f}{\sigma_f}^2 + h_f\sum_{f'\in\FM{d-1}(\Mh(\partial f))} \opn{f'}{\ul\tau_{f'}}^2.
  \end{equation}

  Let us now use this relation to show by induction on $d\in \{k+1, \dots, n\}$ that, for all $f\in\FM{d}(\Mh)$,
  \begin{equation}\label{eq:Pc.b.recopn}
    \opn{f}{\ul{\tau}_{f}}^2
    \lesssim h_f^2  \opn{f}{\ul{\sigma}_f}^2
    + h_f^{d-k}\sum_{f'\in\FM{k}(\Mh(f))}\norm{f'}{\tau_{f'}}^2.
  \end{equation}
  For $d=k+1$, this relation is a straightforward consequence of \eqref{eq:Pc.b.2}, since $\opn{f'}{\ul\tau_{f'}}=\norm{f'}{\tau_{f'}}$ for all $f'\in\FM{k}(\Mh)$ (see \eqref{eq:opn.f.k}).
  Assuming now that \eqref{eq:Pc.b.recopn} holds for some $d\in\{k+1, \dots, n-1\}$, we establish it for $(d+1)$-cells.
  By definition \eqref{eq:opn.f} of the local triple norm we have,
  for all $f\in\FM{d+1}(\Mh)$,
  \begin{equation*}
    \begin{aligned}
      \opn{f}{\ul{\tau}_f}^2
      &= \norm{f}{\tau_f}^2 + h_f \sum_{f'\in\FM{d}(\Mh(\partial f))}\opn{f'}{\ul{\tau}_{f'}}^2\\
      \overset{\eqref{eq:Pc.b.2}}&\lesssim
      h_f^2\norm{f}{\sigma_f}^2
      + h_f  \sum_{f'\in\FM{d}(\Mh(\partial f))}\opn{f'}{\ul{\tau}_{f'}}^2\\
      &\lesssim
      h_f^2\norm{f}{\sigma_f}^2
      + h_f^3  \sum_{f'\in\FM{d}(\Mh(\partial f))}\opn{f'}{\ul{\sigma}_{f'}}^2
      + h_f^{d+1-k}  \sum_{f''\in\FM{k}(\Mh(f))}\norm{f''}{\tau_{f''}}^2 \\
      &\lesssim
      h_f^2  \opn{f}{\ul{\sigma}_f}^2
      + h_f^{d+1-k}  \sum_{f'\in\FM{k}(\Mh(f))}\norm{f'}{\tau_{f'}}^2,
    \end{aligned}
  \end{equation*}
  where the second inequality follows from \eqref{eq:Pc.b.recopn} applied to each $f'\in\FM{d}(\Mh(\partial f))$,
  together with $h_{f'}\le h_f$ and $\sum_{f'\in\FM{d}(\Mh(\partial f))}\sum_{f''\in\FM{k}(\Mh(f'))}\bullet\lesssim \sum_{f''\in\FM{k}(\Mh(f))}\bullet$
  (since each the number of occurrences of each $f''$ in the sum on the left is uniformly bounded by mesh regularity assumption).
  To conclude we have used the definition of $\opn{f}{\ul{\sigma}_f}$. This concludes the induction and therefore the proof of \eqref{eq:Pc.b.recopn}.

  Summing \eqref{eq:Pc.b.recopn} over $n$-cells, we get
  \begin{equation} \label{eq:Pc.b.3}
    \opn{h}{\ul{\tau}_h}^2
    \lesssim \sum_{f\in\FM{\dtop}(\Mh)} h_f^2 \opn{f}{\ul\sigma_f}^2
    + \sum_{f\in\FM{\dtop}(\Mh)} h_f^{\dtop - k} \sum_{f'\in\FM{k}(\Mh(f))} \norm{f'}{\tau_{f'}}^2.
  \end{equation}
  The definition \eqref{eq:choice.star.tauf.0} of $\ul\tau_h$ on $k$-cells and the fact that $\star$ is an isometry yields, for all $f'\in\FM{k}(\Mh)$,
  \[
  \norm{f'}{\tau_{f'}}^2=\frac{\lambda_{f'}^2}{|f'|}\lesssim h_{f'}^{-k}\lambda_{f'}^2,
  \]
  where the inequality is obtained by invoking the mesh regularity assumption to infer $|f'| \simeq h^k_{f'}$.
  Plugging this relation into \eqref{eq:Pc.b.3} and recalling that $(\lambda_{f'})_{f'\in\FM{k}(\Mh)}$ satisfies \eqref{eq:topological.bound} with $(\xi_{f'})_{f'\in\FM{k+1}(\Mh)}=(\int_{f'}\sigma_{f'})_{f'\in\FM{k+1}(\Mh)}$, we obtain
  \begin{equation*}
    \begin{aligned}
      \opn{h}{\ul\tau_h}^2
      \lesssim{}& \sum_{f\in\FM{\dtop}(\Mh)} h_f^2 \opn{f}{\ul\sigma_f}^2
      + \sum_{f\in\FM{\dtop}(\Mh)} h_f^{\dtop - 2 k} \sum_{f'\in\FM{k}(\Mh(f))} \lambda_{f'}^2 \\
      \lesssim{}& \sum_{f\in\FM{\dtop}(\Mh)} h_f^2 \opn{f}{\ul\sigma_f}^2
      + \sum_{f\in\FM{\dtop}(\Mh)} h_f^{\dtop - 2 (k + 1)} \sum_{f'\in\FM{k+1}(\Mh(f))} \norm{f'}{\sigma_{f'}}^2 \vert f'\vert \\
      \lesssim{}& \sum_{f\in\FM{\dtop}(\Mh)} h_f^2 \opn{f}{\ul\sigma_f}^2
      + \sum_{f\in\FM{\dtop}(\Mh)} h_f^{\dtop - (k + 1)} \sum_{f'\in\FM{k+1}(\Mh(f))} \norm{f'}{\sigma_{f'}}^2 \\
      \lesssim{}& \opn{h}{\ul\sigma_h}^2
    \end{aligned}
  \end{equation*}
  where we used a Cauchy--Schwarz inequality on the second line, and $\vert f' \vert \simeq h_f^{k+1}$ (by mesh regularity)
  for all $f'\in\FM{k+1}(\Mh(f))$ in the third line.
  The conclusion follows from the definition \eqref{eq:def.hnorm} of $\opn{h}{\ul\sigma_h}$ using the equivalent formulation \eqref{eq:opn.f.expl} of the local norm.
\end{proof}


\section*{Acknowledgements}

Daniele Di Pietro, J\'er\^{o}me Droniou, and Marien Hanot acknowledge the funding of the European Union via the ERC Synergy, NEMESIS, project number 101115663.
Silvano Pitassi acknowledges the funding of the European Union via the MSCA EffECT, project number 101146324.
Views and opinions expressed are however those of the authors only and do not necessarily reflect those of the European Union or the European Research Council Executive Agency.
Neither the European Union nor the granting authority can be held responsible for them.


\appendix

\section{Results on local polynomial spaces}\label{app:polynomial.spaces}

Some of the results established in this appendix on polynomial forms have already been proved in \cite{Di-Pietro.Droniou:20,Di-Pietro.Droniou:23*1} for polynomial functions and in the vector calculus setting.

For $d \in \{1, \dots, \dtop\}$, we denote by $B_d(x,r)$ the ball in $\Real^d$ of radius $r$ centered at $x$,
and set $B_d \coloneq B_d(0,1)$ for the sake of brevity.
For any $f\in\FM{d}(\Mh)$, we define the function $\psi_f \st x \to  x_f+h_fx$.
Denoting by $\rho$ the mesh regularity parameter, we have
$\psi_f(B_d(0,\rho))=B_d(x_f, \rho h_f) \subset f \subset \psi_f(B_d)=B_d(x_f,h_f)$.
We recall that, if $g$ is an $l$-form, $\psi_f^\star g$ denotes the pullback of $g$ by $\psi_f$, that is,
the $l$-form such that $(\psi^\star g)_x(v_1,\ldots,v_l)=g_{\psi_f(x)}(D\psi_f(x)v_1,\ldots,D\psi_f(x)v_l)$.

\begin{lemma}[Norm equivalence]
  For any $g\in L^2\Lambda^l(\psi_f(B_d))$,
  it holds
  \begin{equation}
    \norm{B_d(0,\rho)}{\psi_f^\star g} \leq h_f^{l-\frac{d}{2}}\norm{f}{g} \leq \norm{B_d}{\psi_f^\star g}.
    \label{eq:equiv.B}
  \end{equation}
\end{lemma}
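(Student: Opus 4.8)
The plan is to reduce both inequalities in \eqref{eq:equiv.B} to a single pointwise scaling identity for $l$-forms followed by a change of variables, and then to exploit the inclusions $B_d(x_f,\rho h_f)\subset f\subset B_d(x_f,h_f)$ recalled just above the lemma. First I would record that $\psi_f$ is a homothety-translation of ratio $h_f$: its Jacobian matrix is $h_f$ times an isometry, so $\psi_f^\star(\DIFF x^{i_1}\wedge\cdots\wedge\DIFF x^{i_l}) = h_f^l\,\DIFF x^{i_1}\wedge\cdots\wedge\DIFF x^{i_l}$, and therefore $\psi_f^\star g = h_f^l\,(g\circ\psi_f)$ for any $g\in\Lambda^l$. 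Invoking \eqref{def:inner.product.gram} for decomposable forms (each of the $l$ wedge factors contributes a factor $h_f$, so the $l\times l$ Gram matrix is multiplied by $h_f^2$ and its determinant by $h_f^{2l}$) and extending by bilinearity, this yields the pointwise identity
\[
|(\psi_f^\star g)_x| = h_f^l\,|g_{\psi_f(x)}| \qquad\text{for a.e. } x\in B_d,
\]
where $|\cdot|$ denotes the norm induced by the pointwise inner product on $l$-forms.

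Next I would integrate this identity and perform the change of variables $y=\psi_f(x)$, for which $\DIFF y = h_f^d\,\DIFF x$. For any measurable $U\subseteq B_d$ this gives
\[
\norm{U}{\psi_f^\star g}^2 = \int_U |(\psi_f^\star g)_x|^2\,\DIFF x = h_f^{2l}\int_U |g_{\psi_f(x)}|^2\,\DIFF x = h_f^{2l-d}\int_{\psi_f(U)} |g_y|^2\,\DIFF y = h_f^{2l-d}\,\norm{\psi_f(U)}{g}^2.
\]
Applying this with $U = B_d(0,\rho)$ (so $\psi_f(U) = B_d(x_f,\rho h_f)$) and then with $U = B_d$ (so $\psi_f(U) = B_d(x_f,h_f)$) yields, respectively, $\norm{B_d(0,\rho)}{\psi_f^\star g} = h_f^{l-\frac d2}\,\norm{B_d(x_f,\rho h_f)}{g}$ and $\norm{B_d}{\psi_f^\star g} = h_f^{l-\frac d2}\,\norm{B_d(x_f,h_f)}{g}$.

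Finally I would conclude using monotonicity of the integral of $|g|^2\ge 0$: the inclusions $B_d(x_f,\rho h_f)\subset f\subset B_d(x_f,h_f)$ give $\norm{B_d(x_f,\rho h_f)}{g}\le\norm{f}{g}\le\norm{B_d(x_f,h_f)}{g}$, and chaining this with the two identities above produces exactly \eqref{eq:equiv.B}. I expect the only non-routine step to be the pointwise scaling identity $|(\psi_f^\star g)_x| = h_f^l\,|g_{\psi_f(x)}|$; once that is in hand, the remainder is the change of variables together with the monotonicity remark, both entirely routine.
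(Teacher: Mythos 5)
Your argument is correct and follows the paper's proof essentially step for step: establish the pointwise scaling $\norm{\Alt^l(\Real^d)}{(\psi_f^\star g)_x} = h_f^l\,\norm{\Alt^l(\Real^d)}{g_{\psi_f(x)}}$ (the paper deduces it directly from $D\psi_f=h_f\,\mathrm{Id}$ and multilinearity, whereas you re-derive it via the Gram determinant, but it is the same identity), then change variables to get $\norm{U}{\psi_f^\star g}^2 = h_f^{2l-d}\norm{\psi_f(U)}{g}^2$, and finish with the inclusions $B_d(x_f,\rho h_f)\subset f\subset B_d(x_f,h_f)$ and monotonicity. No gap to report.
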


\begin{proof}
  Let $g\in L^2\Lambda^l(\psi_f(B_d))$.
  By definition of the pullback, and since $D\psi_f = h_f\mathrm{Id}$, we have, by linearity of $g_{\psi_f(x)}$, $(\psi_f^\star) g_x = h_f^l g_{\psi_f(x)}$.
  Moreover, $\vert D\psi_f \vert = h_f^d$.
  Therefore, using the change of variable formula and denoting by $\norm{\Alt^l(\Real^d)}{{\cdot}}$ the canonical norm (induced by the canonical inner product) on the space $\Alt^l(\Real^d)$  of alternating $l$-linear forms on $\Real^d$, we have
  \begin{align*}
    \norm{B_d}{\psi_f^\star g}^2 &= \int_{B_d} \norm{\Alt^l(\Real^d)}{(\psi_f^\star g)_x}^2 \, \DIFF x
    = h_f^{-d} \int_{B_d} h_f^{2l} \norm{\Alt^l(\Real^d)}{g_{\psi_f(x)}}^2 \vert D\psi_f \vert \, \DIFF x\\
    &= h_f^{2l-d} \int_{\psi_f(B_d)} \norm{\Alt^l(\Real^d)}{g_x}^2 \, \DIFF x
    = h_f^{2l-d} \norm{\psi_f(B_d)}{g}^2 = h_f^{2l-d} \norm{B_d(x_f,h_f)}{g}^2.
  \end{align*}
  Likewise, we have $\norm{B_d(0,\rho)}{\psi_f^\star g}^2 = h_f^{2l-d}\norm{B_d(x_f,\rho h_f)}{g}^2$.
  We conclude by using $B_d(x_f, \rho h_f) \subset f \subset B_d(x_f,h_f)$ to write
  \[
  \norm{B_d(0,\rho)}{\psi_f^\star g}^2 = h_f^{2l-d}\norm{B_d(x_f,\rho h_f)}{g}^2 \leq h_f^{2l-d}\norm{f}{g}^2 \leq h_f^{2l-d} \norm{B_d(x_f,h_f)}{g}^2 =\norm{B_d}{\psi_f^\star g}^2.\qedhere
  \]
\end{proof}

If $P \in \PL{r}{l}(\Real^d)$, then the equivalence of norms in finite dimension ensures that
$\norm{B_d(0,\rho)}{P} \simeq \norm{B_d}{P}$, with hidden constants depending only on the space involved, that is, on $r$, $l$ and $d$.
Taking $\omega\in\PL{r}{l}(\Real^d)$ and applying this remark to $P = \psi_f^\star \omega\in \PL{r}{l}(\Real^d)$ (since $\psi_f$ is linear),
we see that \eqref{eq:equiv.B} yields
\begin{equation}
  h_f^{l-\frac{d}{2}} \norm{f}{\omega} \simeq \norm{B_d}{\psi_f^\star \omega}\qquad\forall\omega\in\PL{r}{l}(\Real^d).
  \label{eq:equiv.P}
\end{equation}
In the following lemmas, when $A$ is a mapping between spaces of polynomial forms on some $f\in\FM{d}(\Mh)$, we denote by $\vvvert A\vvvert$ the norm of $A$ induced by the $L^2(f)$-norms on its domain and co-domains.

\begin{lemma}[Discrete inequalities for $\DIFF$ and $\KOSZUL$]\label{lemma:local.invPoincare}
  For any $f\in\FM{d}(\Mh)$, the differential $\DIFF\st\PL{r}{d-k-1}(f)\to\PL{r-1}{d-k}(f)$ and the Koszul operator $\KOSZUL\st\PL{r}{d-k+1}(f)\to\PL{r+1}{d-k}(f)$ satisfy
  \begin{align}\label{eq:local.d.bound}
    \vvvert \DIFF \vvvert_f \lesssim{}& h_f^{-1},\\
    \label{eq:local.k.bound}
    \vvvert \KOSZUL \vvvert_f \lesssim{}& h_f.
  \end{align}
\end{lemma}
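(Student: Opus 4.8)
The plan is to reduce both estimates to the reference ball $B_d$ by means of the affine scaling $\psi_f \st x \mapsto x_f + h_f x$, using the norm equivalence \eqref{eq:equiv.P} together with the fact that $\DIFF$ and $\KOSZUL$ commute with pullbacks. On $B_d$ both operators act between \emph{fixed} finite-dimensional spaces of polynomial forms, hence are bounded with a constant depending only on $r$, $k$, $d$; the two invocations of \eqref{eq:equiv.P} (one for the domain, one for the co-domain) then produce exactly the claimed powers of $h_f$, and the apparent asymmetry between $h_f^{-1}$ and $h_f^{+1}$ will come from the fact that $\DIFF$ lowers the polynomial degree while $\KOSZUL$ raises it.

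First I would record the two naturality identities. For $\DIFF$ this is the standard fact $\psi_f^\star \DIFF = \DIFF \psi_f^\star$. For $\KOSZUL$ I would compute it directly: since $D\psi_f = h_f\,\mathrm{Id}$ one has $\psi_f(y) - x_f = h_f y = D\psi_f(y)$, so contracting an $l$-form with the radial field at $\psi_f(y)$ pulls back to contracting $\psi_f^\star\omega$ with the radial field at $y$; this gives $\psi_f^\star \KOSZUL = \KOSZUL_0 \psi_f^\star$, where $\KOSZUL_0$ denotes the Koszul operator on $B_d$ based at the origin. I would also note that, $\psi_f$ being affine, $\psi_f^\star$ maps $\PL{s}{l}(f)$ isomorphically onto $\PL{s}{l}(B_d)$ for every $s$ and $l$, so the images of $\DIFF$ and $\KOSZUL_0$ stay within polynomial spaces.

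Then, for \eqref{eq:local.d.bound}, I would take $\omega \in \PL{r}{d-k-1}(f)$ and chain four facts: \eqref{eq:equiv.P} applied to $\DIFF\omega \in \PL{r-1}{d-k}(f)$, the commutation relation, the finite-dimensional bound $\norm{B_d}{\DIFF\psi_f^\star\omega} \lesssim \norm{B_d}{\psi_f^\star\omega}$, and \eqref{eq:equiv.P} applied to $\omega$ itself; this produces
\[
  h_f^{(d-k)-\frac d2}\norm{f}{\DIFF\omega}
  \simeq \norm{B_d}{\DIFF\psi_f^\star\omega}
  \lesssim \norm{B_d}{\psi_f^\star\omega}
  \simeq h_f^{(d-k-1)-\frac d2}\norm{f}{\omega},
\]
whence $\norm{f}{\DIFF\omega}\lesssim h_f^{-1}\norm{f}{\omega}$. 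The estimate \eqref{eq:local.k.bound} follows the same template with $\KOSZUL$ in place of $\DIFF$ and $\KOSZUL_0 \st \PL{r}{d-k+1}(B_d) \to \PL{r+1}{d-k}(B_d)$ in place of the reference differential: for $\omega\in\PL{r}{d-k+1}(f)$ the output form degree is again one less but the output polynomial degree is now one \emph{more}, so \eqref{eq:equiv.P} contributes $h_f^{(d-k)-d/2}$ on the left and $h_f^{(d-k+1)-d/2}$ on the right, leaving a single positive power $h_f$.

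I do not expect a genuine obstacle here. The only point requiring care is the exponent bookkeeping in the two applications of \eqref{eq:equiv.P}, since the form degrees of domain and co-domain differ by one in each case; getting the signs right is precisely what makes the bound come out as $h_f^{-1}$ for $\DIFF$ and $h_f^{+1}$ for $\KOSZUL$. One should also check that the hidden constants coming from the operator norms on $B_d$ depend only on $r$, $k$, $d$, which is immediate because those spaces are independent of $f$.
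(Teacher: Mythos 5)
Your proof is correct and follows essentially the same route as the paper's: pull back to the reference ball via $\psi_f$, use the finite-dimensional continuity of $\DIFF$ and $\KOSZUL_0$ there, and convert back with two applications of \eqref{eq:equiv.P}, with the exponent gap coming from the form-degree shift. The only cosmetic difference is that you derive the commutation $\psi_f^\star\KOSZUL = \KOSZUL_0\psi_f^\star$ directly from $D\psi_f = h_f\,\mathrm{Id}$ rather than invoking the general naturality formula $\phi^\star(i_Y\mu) = i_{\phi_\star Y}\phi^\star\mu$ as the paper does, which is just as valid.
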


\begin{proof}
  The proof of \eqref{eq:local.d.bound} hinges on the fact that the exterior derivative commutes with pullbacks.
  The operator $\DIFF$ is continuous on the finite dimensional space $\PL{r}{d-k-1}(\Real^d)$, with a continuity constant that only depends on $r$, $k$ and $d$. Hence,
  \begin{equation}\label{eq:cont.d0}
    \norm{B_d}{\DIFF\mu} \lesssim \norm{B_d}{\mu} \qquad \forall \mu\in \PL{r}{d-k-1}(\Real^d).
  \end{equation}
  We then write, for all $\omega\in \PL{r}{d-k-1}(f)$ and since $\DIFF\omega\in \PL{r-1}{d-k}(f)$,
  \begin{multline}\label{eq:proof.cont.d}
    \norm{f}{\DIFF\omega}\overset{\eqref{eq:equiv.P}}\simeq h_f^{\frac{d}{2}-(d-k)}\norm{B_d}{\psi_f^\star (\DIFF\omega)}
    =h_f^{k-\frac{d}{2}}\norm{B_d}{\DIFF(\psi_f^\star\omega)}\\
    \overset{\eqref{eq:cont.d0}}\lesssim
    h_f^{k-\frac{d}{2}}\norm{B_d}{\psi_f^\star\omega}\overset{\eqref{eq:equiv.P}}\simeq h_f^{k-\frac{d}{2}}h_f^{d-k-1-\frac{d}{2}}\norm{f}{\omega},
  \end{multline}
  which concludes the proof of \eqref{eq:local.d.bound} since $k-\frac{d}{2}+d-k-1-\frac{d}{2}=-1$.

  The same approach can be applied to the Koszul operator, once we notice that it has a similar
  commutation property with the considered pullbacks.
  Specifically, recalling that $\KOSZUL = i_{x-x_f}$ is the Koszul operator on $f$,
  and defining the Koszul operator at 0 by $\KOSZUL_0 \coloneq i_{x}$, the relation
  \begin{equation}\label{eq:commut.koszul}
    (\psi_f^\star \KOSZUL)_{x} = \left(i_{h_f^{-1} \left( x_f+h_fx - x_f \right)} \psi_f^\star\right)_{x} = \left( \KOSZUL_0 \psi_f^\star \right)_{x}
  \end{equation}
  comes from the generic formula $\phi^\star \left( i_{Y}\mu \right) = i_{\phi_\star Y}\phi^\star\mu$, where $(\phi_\star Y)(x) \coloneq (D\phi(x))^{-1}Y(\phi(x))$.
  Equipped with this commutation and using the continuity of $\KOSZUL_0$ on $\PL{r}{d-k+1}(\Real^d)$, namely
  \begin{equation}\label{eq:cont.koszul0}
    \norm{B_d}{\KOSZUL_0\mu} \lesssim \norm{B_d}{\mu} \qquad \forall \mu\in \PL{r}{d-k+1}(\Real^d),
  \end{equation}
  we can reproduce the same arguments as in \eqref{eq:proof.cont.d} to prove \eqref{eq:local.k.bound}, the change
  of scaling from $h_f^{-1}$ to $h_f$ coming from the fact that, in \eqref{eq:proof.cont.d}, the form degree of $\DIFF\omega$ is one more than
  that of $\omega$, while it is one less for $\KOSZUL\omega$.
\end{proof}

\begin{lemma}[Local discrete Poincaré inequality]\label{lemma:local.Poincare}
  For any $f\in\FM{d}(\Mh)$, the inverse of the differential $\DIFF^{-1}\st\DIFF\KOSZUL\PL{r}{k}(f)\to\KOSZUL\PL{r}{k}(f)$ and the inverse of the Koszul operator $\KOSZUL^{-1}\st\KOSZUL\DIFF\PL{r}{k}(f)\to\DIFF\PL{r}{k}(f)$  satisfy
  \begin{align}\label{eq:local.Poincare.d}
    \vvvert \DIFF^{-1} \vvvert_f \lesssim{}& h_f,\\
    \label{eq:local.Poincare.k}
    \vvvert \KOSZUL^{-1} \vvvert_f \lesssim{}& h_f^{-1} .
  \end{align}
\end{lemma}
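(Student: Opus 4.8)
The plan is to follow the template of the proof of Lemma~\ref{lemma:local.invPoincare}: transport the estimate to the reference ball $B_d$ through the scaling map $\psi_f$, use that on $B_d$ the relevant inverse operators are bounded between finite-dimensional polynomial-form spaces, and then keep track of the powers of $h_f$ produced by the norm equivalence \eqref{eq:equiv.P}. The first step is to record that, on $\Real^d$ (equivalently on $B_d$), the restrictions $\DIFF\st\KOSZUL\PL{r}{k}(\Real^d)\to\DIFF\KOSZUL\PL{r}{k}(\Real^d)$ and $\KOSZUL_0\st\DIFF\PL{r}{k}(\Real^d)\to\KOSZUL_0\DIFF\PL{r}{k}(\Real^d)$ are isomorphisms --- this is the homotopy identity $\DIFF\KOSZUL_0+\KOSZUL_0\DIFF=(\text{weighted degree})$ on polynomial forms, the same fact underlying the decomposition $\PL{r}{k}=\DIFF\PL{r+1}{k-1}\oplus\KOSZUL\PL{r-1}{k+1}$ recalled above. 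Being maps between finite-dimensional spaces, their inverses are bounded with constants depending only on $r$, $k$, $d$: $\norm{B_d}{\DIFF^{-1}\mu}\lesssim\norm{B_d}{\mu}$ and $\norm{B_d}{\KOSZUL_0^{-1}\nu}\lesssim\norm{B_d}{\nu}$.

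For \eqref{eq:local.Poincare.d}, given $\omega\in\DIFF\KOSZUL\PL{r}{k}(f)$ (a $k$-form), I would set $\eta\coloneqq\DIFF^{-1}\omega\in\KOSZUL\PL{r}{k}(f)$ (a $(k-1)$-form), so $\DIFF\eta=\omega$. The pullback $\psi_f^\star$ is a linear isomorphism of polynomial-form spaces that commutes with $\DIFF$ and satisfies $\psi_f^\star\KOSZUL=\KOSZUL_0\psi_f^\star$ by \eqref{eq:commut.koszul}; hence $\psi_f^\star\eta\in\KOSZUL_0\PL{r}{k}(\Real^d)$ and $\DIFF(\psi_f^\star\eta)=\psi_f^\star\omega$, that is, $\psi_f^\star\eta=\DIFF^{-1}(\psi_f^\star\omega)$ on $B_d$. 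Applying \eqref{eq:equiv.P} to $\eta$ and to $\omega$ together with the boundedness of $\DIFF^{-1}$ on $B_d$ then yields
\[
\norm{f}{\DIFF^{-1}\omega}=\norm{f}{\eta}\simeq h_f^{\frac d2-k+1}\norm{B_d}{\DIFF^{-1}(\psi_f^\star\omega)}\lesssim h_f^{\frac d2-k+1}\norm{B_d}{\psi_f^\star\omega}\simeq h_f^{\frac d2-k+1}h_f^{k-\frac d2}\norm{f}{\omega}=h_f\norm{f}{\omega},
\]
the net exponent $(\tfrac d2-k+1)+(k-\tfrac d2)=1$ giving exactly the scaling in \eqref{eq:local.Poincare.d}. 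For \eqref{eq:local.Poincare.k} the same argument applies verbatim with $\KOSZUL^{-1}$ and $\KOSZUL_0^{-1}$ in place of $\DIFF^{-1}$: now $\zeta\coloneqq\KOSZUL^{-1}\omega\in\DIFF\PL{r}{k}(f)$ is a $(k+1)$-form, the relation $\KOSZUL_0(\psi_f^\star\zeta)=\psi_f^\star(\KOSZUL\zeta)=\psi_f^\star\omega$ gives $\psi_f^\star\zeta=\KOSZUL_0^{-1}(\psi_f^\star\omega)$ on $B_d$, and \eqref{eq:equiv.P} now produces the exponent $(\tfrac d2-k-1)+(k-\tfrac d2)=-1$, hence $\norm{f}{\KOSZUL^{-1}\omega}\lesssim h_f^{-1}\norm{f}{\omega}$.

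The only point that is not routine scaling bookkeeping is the well-definedness and boundedness of $\DIFF^{-1}$ and $\KOSZUL_0^{-1}$ on the reference configuration; this is standard for the polynomial de Rham/Koszul complex and is in any case already taken for granted in the main text (for instance in the construction of $\tau_f$ in the proof of Theorem~\ref{thm:Poincare} and in the local isomorphism statements used there), so I would either invoke it directly or give a one-line argument from the homotopy identity. Once that is in place, and with the commutation \eqref{eq:commut.koszul} for the Koszul operator, the argument is a direct transcription of the computation in \eqref{eq:proof.cont.d}.
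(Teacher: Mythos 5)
Your proof is correct and takes essentially the same approach as the paper: scale to the reference ball $B_d$ through $\psi_f$ using \eqref{eq:equiv.P}, use the commutation of the pullback with $\DIFF$ and (via \eqref{eq:commut.koszul}) with the Koszul operator so that domains and codomains match on $B_d$, and invoke boundedness of $\DIFF^{-1}$ and $\KOSZUL_0^{-1}$ on the finite-dimensional reference polynomial-form spaces. Your execution is in fact slightly more direct than the paper's: you apply the reference-ball inverse straight to $\psi_f^\star\omega$, whereas the paper first picks a pre-image $\omega$ with $\mu=\DIFF\KOSZUL\omega$ and transports $\DIFF^{-1}\mu=\KOSZUL\omega$, and its written chain ends at $\norm{f}{\omega}$ rather than $\norm{f}{\mu}$ (the clean way to close that chain is to stop at $\norm{B_d}{\DIFF\KOSZUL_0\psi_f^\star\omega}=\norm{B_d}{\psi_f^\star\mu}\simeq h_f^{k-\frac d2}\norm{f}{\mu}$); your formulation sidesteps this detour entirely. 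Your identification of the well-definedness/boundedness of the reference inverses as the one non-mechanical ingredient, and the remark that it follows from the Koszul homotopy identity, is also the right way to isolate what has to be taken for granted.
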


\begin{proof}
  The proof uses the same arguments as the proof of Lemma \ref{lemma:local.invPoincare},
  using this time the continuity of $\DIFF^{-1} \st \DIFF\KOSZUL_0\PL{r}{k}(B_d) \to \KOSZUL_0\PL{r}{k}(B_d)$,
  and of $\KOSZUL_0^{-1} \st \KOSZUL_0\DIFF\PL{r}{k}(B_d)\to \DIFF\PL{r}{k}(B_f)$ (with norms only depending on $r$, $k$ and $d$).
  Let $\mu \in \DIFF\KOSZUL\PL{r}{k}(f)$ and set $\omega\in \PL{r}{k}(f)$ such that $\mu = \DIFF\KOSZUL \omega$. Then $\DIFF^{-1}\mu=\KOSZUL\omega$ and
  \begin{multline*}
    h_f^{k-1-\frac{d}{2}}\norm{f}{\DIFF^{-1} \mu} \overset{\eqref{eq:equiv.P}}\simeq
    \norm{B_d}{\psi_f^\star \KOSZUL \omega}
    \overset{\eqref{eq:commut.koszul}}= \norm{B_d}{\DIFF^{-1}\DIFF\KOSZUL_0\psi_f^\star \omega}
    \\
    \lesssim \norm{B_d}{\DIFF\KOSZUL_0\psi_f^\star \omega}
    \overset{\eqref{eq:cont.d0},\eqref{eq:cont.koszul0}}\lesssim
    \norm{B_d}{\psi_f^\star \omega}
    \overset{\eqref{eq:equiv.P}}\simeq h_f^{k-\frac{d}{2}}\norm{f}{\omega}.
  \end{multline*}
  Simplifying by $h_f^{k-1-\frac{d}{2}}$ concludes the proof of \eqref{eq:local.Poincare.d}.
  The proof of \eqref{eq:local.Poincare.k} follows from similar arguments.
\end{proof}

\begin{lemma}[Topological decomposition]\label{lemma:topological.decomp}
  The decomposition
  \[
  \PL{r}{k}(f) = \DIFF\PL{r+1}{k-1}(f)\oplus \KOSZUL\PL{r-1}{k+1}(f)
  \]
  is topological:
  For all $(\mu_f,\nu_f) \in \DIFF\PL{r+1}{k-1}(f)\times \KOSZUL\PL{r-1}{k+1}(f)$,
  \[
  \norm{f}{\mu_f} + \norm{f}{\nu_f} \simeq \norm{f}{\mu_f + \nu_f} .
  \]
\end{lemma}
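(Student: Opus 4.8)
The plan is to prove the nontrivial inequality $\norm{f}{\mu_f}+\norm{f}{\nu_f}\lesssim\norm{f}{\mu_f+\nu_f}$, the reverse bound being immediate from the triangle inequality. Since the spaces involved are finite-dimensional, on a fixed cell $f$ the stated equivalence holds automatically; the only real content is that the equivalence constant can be chosen independently of $f$ (equivalently, of the mesh and its size). The strategy is to transport the situation to the reference ball $B_d$ via the affine map $\psi_f$ introduced in Appendix~\ref{app:polynomial.spaces}, where the decomposition reduces to a fixed one depending only on $r$, $k$ and $d$, and then to carry the resulting estimate back using the scaled norm equivalence \eqref{eq:equiv.P}.

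First I would set $\hat\mu\coloneqq\psi_f^\star\mu_f$ and $\hat\nu\coloneqq\psi_f^\star\nu_f$. Since $\psi_f$ is an invertible affine map, $\psi_f^\star$ is a linear isomorphism of $\PL{r}{k}(\Real^d)$ preserving the polynomial degree; using that the exterior derivative commutes with pullbacks together with the Koszul commutation identity \eqref{eq:commut.koszul} (which, because $\psi_f(0)=x_f$, turns $\KOSZUL=i_{x-x_f}$ into $\KOSZUL_0=i_x$), $\psi_f^\star$ restricts to isomorphisms
\[
\DIFF\PL{r+1}{k-1}(f)\ \xrightarrow{\sim}\ \DIFF\PL{r+1}{k-1}(B_d),\qquad
\KOSZUL\PL{r-1}{k+1}(f)\ \xrightarrow{\sim}\ \KOSZUL_0\PL{r-1}{k+1}(B_d).
\]
Thus $(\hat\mu,\hat\nu)$ lies in the pair of fixed finite-dimensional subspaces of $L^2\Lambda^k(B_d)$ occurring in the polynomial decomposition $\PL{r}{k}(B_d)=\DIFF\PL{r+1}{k-1}(B_d)\oplus\KOSZUL_0\PL{r-1}{k+1}(B_d)$ of \cite[Eq.~(3.11)]{Arnold.Falk.ea:06} with Koszul base point $0$. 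Being a direct sum of two fixed finite-dimensional subspaces, its projectors are bounded, so there is a constant depending only on $r,k,d$ such that $\norm{B_d}{\hat\mu}+\norm{B_d}{\hat\nu}\lesssim\norm{B_d}{\hat\mu+\hat\nu}$.

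It then remains to undo the scaling. The key observation is that the three forms $\mu_f$, $\nu_f$ and $\mu_f+\nu_f$ all have form degree $k$ (indeed $\DIFF$ raises a $(k-1)$-form to degree $k$ and $\KOSZUL$ lowers a $(k+1)$-form to degree $k$), so \eqref{eq:equiv.P} applies to each of them with the \emph{same} power of $h_f$:
\[
\norm{f}{\mu_f}\simeq h_f^{\frac{d}{2}-k}\norm{B_d}{\hat\mu},\qquad
\norm{f}{\nu_f}\simeq h_f^{\frac{d}{2}-k}\norm{B_d}{\hat\nu},\qquad
\norm{f}{\mu_f+\nu_f}\simeq h_f^{\frac{d}{2}-k}\norm{B_d}{\hat\mu+\hat\nu}.
\]
Combining these equivalences with the reference-ball estimate yields $\norm{f}{\mu_f}+\norm{f}{\nu_f}\lesssim\norm{f}{\mu_f+\nu_f}$, as desired. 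The one delicate point in the whole argument is precisely this cancellation of $h_f$-powers: unlike the operator bounds of Lemmas~\ref{lemma:local.invPoincare} and~\ref{lemma:local.Poincare}, where $\DIFF$ and $\KOSZUL$ change the form degree and hence leave a surviving power of $h_f$, here no such power appears because all forms in play share the degree $k$. Everything else is the routine verification that $\psi_f^\star$ intertwines the two decompositions, which follows verbatim from the commutation identities already recorded in Appendix~\ref{app:polynomial.spaces}.
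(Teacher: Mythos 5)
Your argument is correct, but it takes a genuinely different route from the paper's. The paper proves the nontrivial inequality by applying $\KOSZUL$ to $\mu_f+\nu_f$ (which annihilates $\nu_f$ since $\KOSZUL^2=0$), then inverting $\KOSZUL$ on $\KOSZUL\DIFF\PL{r+1}{k-1}(f)$, invoking the operator bounds \eqref{eq:local.k.bound} and \eqref{eq:local.Poincare.k} of Lemmas~\ref{lemma:local.invPoincare} and~\ref{lemma:local.Poincare} so that the $h_f$ and $h_f^{-1}$ factors cancel; the estimate on $\nu_f$ is obtained symmetrically using $\DIFF$ and $\DIFF^2=0$. You instead pull the whole decomposition back to the reference ball $B_d$, where it becomes a fixed direct sum of two finite-dimensional subspaces with bounded projectors, and observe that the scaling equivalence \eqref{eq:equiv.P} carries the same power $h_f^{d/2-k}$ across all three norms because $\mu_f$, $\nu_f$, and $\mu_f+\nu_f$ all have form degree $k$. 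The two approaches are close cousins — the paper's operator bounds are themselves proved by pulling back to $B_d$ — but yours makes the scaling cancellation structurally transparent in a single step and never invokes $\DIFF^2=\KOSZUL^2=0$, while the paper's version reuses the inverse bounds it has already established and keeps the argument entirely intrinsic to $f$. Both are complete and correct.
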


\begin{proof}
  The inequality $\gtrsim$ directly follows from a triangle inequality, so we focus on $\lesssim$.
  We have
  \[
  \norm{f}{\mu_f} = \norm{f}{\KOSZUL^{-1}\KOSZUL\mu_f}
  \overset{\eqref{eq:local.Poincare.k}}\lesssim h_f^{-1}\norm{f}{\KOSZUL\mu_f}
  = h_f^{-1}\norm{f}{\KOSZUL(\mu_f + \nu_f)}
  \overset{\eqref{eq:local.k.bound}}\lesssim\norm{f}{\mu_f+\nu_f},
  \]
  where we have used the fact that $\KOSZUL \nu_f \in\KOSZUL^2\PL{r-1}{k+1}(f)= \{0\}$ in the third passage.
  We also have
  \[
  \norm{f}{\nu_f} = \norm{f}{\DIFF^{-1}\DIFF\nu_f}
  \overset{\eqref{eq:local.Poincare.d}}\lesssim h_f\norm{f}{\DIFF\nu_f}
  = h_f\norm{f}{\DIFF(\nu_f + \mu_f)}
  \overset{\eqref{eq:local.d.bound}}\lesssim \norm{f}{\nu_f+\mu_f},
  \]
  where we have used $\DIFF \mu_f\in \DIFF^2\PL{r+1}{k-1}(f)= \{0\}$ in the third passage. Combining these two relations
  yields the required estimate.
\end{proof}

\begin{lemma}[Discrete trace inequality]\label{lemma:trace.discrete}
  For all $f\in\FM{d}(\Mh)$
  and all $\omega_f\in\PL{r}{k}(f)$ it holds
  \begin{equation}\label{eq:trace.discrete}
    \norm{\pf}{\omega_f} \lesssim h_f^{-\frac12} \norm{f}{\omega_f}.
  \end{equation}
\end{lemma}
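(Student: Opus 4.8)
The plan is to reduce the boundary estimate to a trace inequality on each $d$-simplex of the submesh $\Ih$ abutting $\partial f$, and to prove the latter by the usual scaling argument on a reference simplex. Since $\partial f = \bigcup_{f'\in\FM{d-1}(\Mh(\partial f))}f'$ and each $(d-1)$-cell $f'$ is partitioned by the $(d-1)$-simplices of $\Ih$ contained in it, one has
\[
\norm{\pf}{\omega_f}^2
= \sum_{f'\in\FM{d-1}(\Mh(\partial f))}\norm{f'}{\tr_{f'}\omega_f}^2
= \sum_{F\in\FM{d-1}(\Ih(\partial f))}\norm{F}{\tr_F\omega_f}^2 .
\]
I would then pair each boundary $(d-1)$-simplex $F$ with a $d$-simplex $S_F\in\FM{d}(\Ih(f))$ having $F$ as a facet. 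Granting the simplex-level estimate $\norm{F}{\tr_F\omega_f}\lesssim h_{S_F}^{-1/2}\norm{S_F}{\omega_f}$, and using that every $S\in\FM{d}(\Ih(f))$ carries at most $d+1\lesssim 1$ of its facets on $\partial f$, that the $d$-simplices of $\Ih(f)$ tile $f$, and that $h_S\simeq h_f$ for $S\subset f$ by mesh regularity, I obtain
\[
\norm{\pf}{\omega_f}^2
\lesssim \sum_{F}h_{S_F}^{-1}\norm{S_F}{\omega_f}^2
\lesssim h_f^{-1}\sum_{S\in\FM{d}(\Ih(f))}\norm{S}{\omega_f}^2
= h_f^{-1}\norm{f}{\omega_f}^2 ,
\]
which is the claim after taking square roots.

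It remains to establish the simplicial trace inequality: for a shape-regular $d$-simplex $S$ with facet $\sigma$ and any $\omega\in\PL{r}{k}(S)$, $\norm{\sigma}{\tr_\sigma\omega}\lesssim h_S^{-1/2}\norm{S}{\omega}$ (the case $k=d$, where the trace vanishes, being trivial). I would let $\phi_S$ be the affine bijection from a reference $d$-simplex $\hat S$ onto $S$, restricting to an affine bijection $\hat\sigma\to\sigma$ between the corresponding facets; by regularity of $\Ih$ the linear part of $\phi_S$ has both norm and inverse-norm $\simeq h_S$ (likewise for the facet map, since $h_\sigma\simeq h_S$). Exactly as in the homothety case \eqref{eq:equiv.P}, the change-of-variables formula for pullbacks — using $|\det\JAC\phi_S|\simeq h_S^d$ together with the bound $\simeq h_S^k$ for the operator norm of the map induced on $\Alt^k(\Real^d)$ — gives $\norm{S}{\omega}\simeq h_S^{d/2-k}\norm{\hat S}{\phi_S^\star\omega}$ and, on the $(d-1)$-dimensional facet, $\norm{\sigma}{\tr_\sigma\omega}\simeq h_S^{(d-1)/2-k}\norm{\hat\sigma}{\tr_{\hat\sigma}\phi_S^\star\omega}$, where I use that pullback commutes with restriction to a facet and that $\phi_S^\star\omega\in\PL{r}{k}(\hat S)$ because affine maps preserve polynomial degree. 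Since the trace map $\PL{r}{k}(\hat S)\to L^2\Lambda^k(\hat\sigma)$ is linear on a finite-dimensional space, hence bounded, $\norm{\hat\sigma}{\tr_{\hat\sigma}\phi_S^\star\omega}\lesssim\norm{\hat S}{\phi_S^\star\omega}$; combining the three relations and noting that the exponents $(d-1)/2-k$ and $k-d/2$ sum to $-1/2$ yields the simplicial estimate.

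The only step that is not pure bookkeeping is the scaling relation for norms of polynomial forms under a general affine map — that is, the analogue of \eqref{eq:equiv.P} with the homothety $\psi_f$ replaced by $\phi_S$. This is classical, and is already used implicitly for the Whitney form estimates of Appendix~\ref{section:whitney}, so I would either invoke it or record the one-line change-of-variables computation; everything else (the decomposition of $\partial f$, the pairing $F\mapsto S_F$, and the cardinality bounds) follows directly from the definition of a simplicial submesh and the mesh regularity assumption.
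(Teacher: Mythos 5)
Your proof is correct, but it takes a genuinely different route from the paper's. The paper works directly on the polytopal cell $f$: it expands $\omega_f$ in the monomial basis $\{(x-x_f)^\alpha\,\DIFF x^\beta\}$, computes the scaling of the $L^2(f)$-norm in terms of the coefficients via the homothety $\psi_f$ and \eqref{eq:equiv.P}, then, for each hyperface $f'\in\FM{d-1}(\Mh(\partial f))$, writes the trace explicitly by decomposing $x_f = x_{ff'} + c\,n_{f'}$ and comparing the resulting coefficient vectors. Your proof instead routes through the simplicial submesh $\Ih$: you partition $\partial f$ into $(d-1)$-simplices, pair each boundary facet with the unique abutting $d$-simplex of $\Ih(f)$, and reduce to a trace inequality on a shape-regular simplex, proved by a reference-element scaling under the affine map $\phi_S$. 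Both arguments are sound; the one nontrivial ingredient you need that is not quite \eqref{eq:equiv.P} — the norm-scaling $\norm{\hat S}{\phi_S^\star\omega}\simeq h_S^{k-d/2}\norm{S}{\omega}$ for a general affine bijection with $\vvvert A\vvvert\simeq\vvvert A^{-1}\vvvert^{-1}\simeq h_S$ — is exactly the computation carried out in the proof of \eqref{eq:Whitney.norm.P2} in Appendix~\ref{section:whitney}, so you are right that it is already available. What the paper's approach buys is that it avoids the submesh entirely, keeping Lemma~\ref{lemma:trace.discrete} a purely "polytopal" statement; what your approach buys is that it replaces the slightly ad hoc coefficient bookkeeping by the classical, transparent reference-simplex argument familiar from finite element theory. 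One small point to note: your first displayed identity implicitly treats $\norm{\pf}{\cdot}$ as the unsigned $L^2$ norm on $\partial f$ (i.e., $\sum_{f'}\norm{f'}{\cdot}^2$), not the signed pairing \eqref{eq:oriented.product.pf}; this is the intended reading (it is how \eqref{eq:trace.discrete} is used in the proof of Theorem~\ref{thm:Poincare}), but it is worth stating.
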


\begin{proof}
  We introduce the notations
  \[
  \mathfrak{A}_l^d \coloneq \left\lbrace \alpha \in \mathbb{N}^d \st \sum_{i = 1}^d \alpha_i = l \right\rbrace, \quad
  \mathfrak{B}_k^d \coloneq \left\lbrace \beta \in \lbrace 0, 1 \rbrace^d \st \sum_{i = 1}^d \beta_i = k \right\rbrace,
  \]
  for the sets of multi-indices corresponding to the set of monomials of degree $l$, and to a basis of $k$-forms.
  Specifically, letting $y^{\alpha}\coloneq y_1^{\alpha_1}\cdots y_d^{\alpha_d}$ and, if $i_1<\ldots<i_k$ are the indices such that $\beta_i=1$ if and only if $i\in\{i_1,\ldots,i_k\}$, $\DIFF x^\beta\coloneq\DIFF x^{i_1}\wedge\cdots\wedge \DIFF x^{i_k}$,
  the set of monomials $\left\{ (x-x_f)^\alpha\DIFF x^{\beta} \right\}_{l\in\left\{ 0,\dots,r\right\},\alpha\in \mathfrak{A}_l^d,\beta\in \mathfrak{B}_k^d}$ forms a basis of $\PL{r}{k}(\Real^d)$. Any $\omega_f \in \PL{r}{k}(f)$ can thus be evaluated at a point $x$ according to the following formula:
  \[
  (\omega_f)_x = \sum_{l=0}^{r}\sum_{\alpha\in \mathfrak{A}_l^d,\beta\in \mathfrak{B}_k^d} \lambda_{l,\alpha,\beta} (x-x_f)^{\alpha} \DIFF x^{\beta}
  \]
  and, using the equivalence of norms in finite dimension and the fact that $\left\{ x^\alpha\DIFF x^{\beta} \right\}_{l\in\left\{ 0,\dots,r\right\},\alpha\in \mathfrak{A}_l^d,\beta\in \mathfrak{B}_k^d}$ is a basis of $\PL{r}{k}(\Real^d)$, we can write
  \begin{equation}
    \begin{aligned}
      \norm{f}{\omega_f}^2 \overset{\eqref{eq:equiv.P}}&\simeq h_f^{-2k+d}\norm{B_d}{\psi_f^\star \omega_f}^2
      = h_f^{-2k+d}
      \left\Vert \sum_{l=0}^{r}\sum_{\alpha\in \mathfrak{A}_l^d,\beta\in \mathfrak{B}_k^d}
      \lambda_{l,\alpha,\beta} (h_f x)^{\alpha} h_f^k \DIFF x^{\beta}\right\Vert_{B_d}^2 \\
      &\simeq h_f^{-2k+d} \sum_{l=0}^{r}\sum_{\alpha\in \mathfrak{A}_l^d,\beta\in \mathfrak{B}_k^d}
      h_f^{2l + 2k} \lambda_{l,\alpha,\beta} ^2
      = h_f^d \sum_{l=0}^{r}\sum_{\alpha\in \mathfrak{A}_l^d,\beta\in \mathfrak{B}_k^d}
      h_f^{2l} \lambda_{l,\alpha,\beta} ^2 .
    \end{aligned}
    \label{eq:l2.omegaf}
  \end{equation}
  Let $f' \in \FM{d-1}(\Mh(f))$.
  We define $x_{f\!f'}$ as the orthogonal projection of $x_f$ onto the tangent plane to $f'$,
  so that $x_f = x_{f\!f'} + c n_{f'}$ for some $c\in\Real$ such that $|c|\le h_f$ and with $n_{f'}$ a unit normal vector to $f'$ in the space spanned by $f$.
  Assuming, without loss of generality, that $n_{f'}$ is along the $d$-th basis vector,
  we notice that, for any multi-index $\alpha = (\alpha_1, \dots, \alpha_d) \in \mathbb{N}^d$,
  $((x-x_f)^\alpha)_{\vert f'} = (x_{\vert f'} - x_{f\!f'})^{(\alpha_1, \dots, \alpha_{d-1})} c^{\alpha_d}$.
  Therefore, denoting by
  \[
  (\omega_f)_{\vert f'} = \sum_{l=0}^{r}\sum_{\alpha'\in \mathfrak{A}_l^{d-1},\beta'\in \mathfrak{B}_k^{d-1}} \lambda_{l,\alpha',\beta'}' (x_{\vert f'}-x_{f\!f'})^{\alpha'} \DIFF x^{\beta'}
  \]
  the trace of $\omega_f$ on $f'$, we have
  \[
  \lambda_{l,\alpha',\beta'}' = \sum_{l'=l}^r c^{l'-l} \lambda_{l',(\alpha',l'-l),\beta'}.
  \]
  Proceeding as in \eqref{eq:l2.omegaf}, we have
  \begin{equation*}
    \begin{aligned}
      \norm{f'}{(\omega_f)_{\vert f'}}^2 &\simeq
      h_{f'}^{d-1} \sum_{l=0}^{r}\sum_{\alpha'\in \mathfrak{A}_l^{d-1},\beta'\in \mathfrak{B}_k^{d-1}}h_{f'}^{2l} \lambda_{l,\alpha',\beta'}'^2\\
      &\lesssim
      h_{f'}^{d-1} \sum_{l=0}^{r}\sum_{\alpha'\in \mathfrak{A}_l^{d-1},\beta'\in \mathfrak{B}_k^{d-1}} \sum_{l'=l}^r h_{f'}^{2l}c^{2(l'-l)} \lambda_{l',(\alpha',l'-l),\beta'}^2\\
      \overset{\vert c\vert\leq h_f}&\simeq
      h_{f}^{d-1} \sum_{l=0}^{r}\sum_{\alpha'\in \mathfrak{A}_l^{d-1},\beta'\in \mathfrak{B}_k^{d-1}} \sum_{l'=l}^r h_{f}^{2l'}\lambda_{l',(\alpha',l'-l),\beta'}^2
      \overset{\eqref{eq:l2.omegaf}}\lesssim h_f^{-1} \norm{f}{\omega_f}^2,
    \end{aligned}
  \end{equation*}
  where, in the conclusion, we have used the mesh regularity assumption to write $h_{f'} \simeq h_f$.
\end{proof}

\section{Whitney forms}\label{section:whitney}

The proof of the Poincar\'e inequality in the lowest order case (Lemma \ref{lemma:topological.bound}) relies on the use of a conforming basis of polynomial forms on a simplicial mesh, given by the Whitney forms.
We recall here their definition and key properties.

Any simplex $T$ of dimension $\dtop$ is the convex hull of some vertices $v_0, \dots, v_{\dtop}$ not contained in a hyperplane.
We denote by $\lambda_0,\dots,\lambda_{\dtop}$ the barycentric coordinates associated to these vertices.
The exterior derivatives $\DIFF\lambda_0,\dots\DIFF\lambda_{\dtop}$ are constant $1$-forms spanning  the space of $1$-forms on $\mathbb{R}^n$.
They satisfy the relations
\begin{equation}\label{eq:bary.constraint}
  \text{%
    $\sum_{i=0}^{\dtop} \lambda_i = 1$\quad
    and\quad
    $\sum_{i=0}^{\dtop} \DIFF\lambda_i = 0$.
  }
\end{equation}

For any $k \in \{0, \dots, \dtop\}$, we denote by $\Sigma(k)$ the set of strictly increasing maps $\lbrace 0,\dots,k\rbrace\mapsto\lbrace 0,\dots,\dtop\rbrace$.
For $\sigma\in\Sigma(k)$, we denote the range of $\sigma$ by $[\sigma] \coloneq \lbrace \sigma(i) \rbrace_{0 \leq i \leq k}$,
and, for $p \notin [\sigma]$, we set $\epsilon(p,\sigma) \coloneq (-1)^{\card\left(\{q \in [\sigma] \;:\; p > q\}\right)}$.
For $p \in [\sigma]$, we denote by $\sigma - p$ the unique element of $\Sigma(k-1)$ whose range is $[\sigma]\setminus p$,
and conversely, for $p \notin [\sigma]$, $\sigma + p$ is the unique element of $\Sigma(k+1)$ with range $[\sigma]\cup\lbrace p\rbrace$.
There is a one-to-one correspondence between $\Sigma(k)$ and the $k$-skeleton $\FM{k}(\Mh(T))$ of $T$,
associating each $\sigma\in\Sigma(k)$ with the convex hull of $\{v_l\}_{l\in [\sigma]}$; so, depending on the context, the notation $\sigma$ may also be used to indicate the latter.
We also define the basic $k$-alternator
$\DIFF\lambda_\sigma \coloneq \DIFF\lambda_{\sigma(0)}\wedge\dots\wedge\DIFF\lambda_{\sigma(k)}$.

For all $k \in \{0, \dots, \dtop\}$, and all $\sigma\in\Sigma(k)$, we define the Whitney form $\phi_{T,\sigma}^k\in \PLtrim{1}{k}(T)$ associated to $\sigma$ by
\begin{equation}\label{eq:def.whitney}
  \phi_{T,\sigma}^k \coloneq \sum_{p\in[\sigma]} \epsilon(p,\sigma - p)\lambda_p\DIFF\lambda_{\sigma - p} .
\end{equation}
We recall the following result from \cite[Eq.~(III.8)]{Licht:17*1}:
For all $\sigma,\rho\in\Sigma(k)$,
\begin{equation}\label{eq:whitney.dualbasis}
  \int_\sigma \phi^k_{T,\rho} =
  \begin{cases}
    \frac{1}{\dtop!} & \text{if $\sigma = \rho$,}
    \\
    0 & \text{if $\sigma \neq \rho$}.
  \end{cases}
\end{equation}

\begin{lemma}
  For all $k \in \{0, \dots, \dtop\}$ and all $\sigma\in\Sigma(k)$, we have
  \begin{equation}\label{eq:Whitney.norm}
    \norm{T}{\phi^k_{T,\sigma}}^2 \simeq h_T^{\dtop - 2k} .
  \end{equation}
  Moreover, for any family $\{ \mu_\sigma \}_{\sigma\in\Sigma(k)}$ of real numbers,
  \begin{equation}
    \left\| \sum_{\sigma\in\Sigma(k)} \mu_\sigma \phi^k_{T,\sigma} \right\|_T^2
    \simeq \sum_{\sigma\in\Sigma(k)} \mu_\sigma^2 \norm{T}{\phi_{T,\sigma}^k}^2.
    \label{eq:Whitney.topo}
  \end{equation}
\end{lemma}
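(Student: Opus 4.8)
The plan is to transfer both scaling relations to the unit ball $B_\dtop$ of $\Real^\dtop$ through the affine rescaling $\psi_T\st x\mapsto x_T+h_Tx$ of Appendix~\ref{app:polynomial.spaces}, and to close the argument by a compactness argument on the resulting family of uniformly shape-regular reference simplices.

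First I would record the behaviour of Whitney forms under $\psi_T$. Put $\widehat T\coloneq\psi_T^{-1}(T)$, with vertices $\widehat v_i\coloneq\psi_T^{-1}(v_i)$. Since the pullback of a $0$-form is composition, $\psi_T^\star\lambda_i=\lambda_i\circ\psi_T$ is precisely the barycentric coordinate of $\widehat T$ attached to $\widehat v_i$; as the pullback commutes with $\DIFF$ and $\wedge$, the defining formula~\eqref{eq:def.whitney} yields $\psi_T^\star\phi^k_{T,\sigma}=\phi^k_{\widehat T,\sigma}$, the Whitney $k$-form of $\widehat T$. Moreover $\widehat T\subset B_\dtop$ (because $T\subset\psi_T(B_\dtop)=B_\dtop(x_T,h_T)$), and the diameter and inradius of $\widehat T$ satisfy $h_{\widehat T}=h_T/h_T=1$ and, by the mesh-regularity assumption, $r_{\widehat T}=r_T/h_T\ge\rho$. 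Since $\phi^k_{T,\sigma}\in\PLtrim{1}{k}(T)$ has polynomial coefficients, the norm equivalence~\eqref{eq:equiv.P} (with $l=k$, $d=\dtop$, $f=T$) applies to it and, by linearity, to every linear combination $\sum_{\sigma\in\Sigma(k)}\mu_\sigma\phi^k_{T,\sigma}$; squaring it gives
\begin{equation*}
  \Norm{T}{\sum_{\sigma\in\Sigma(k)}\mu_\sigma\phi^k_{T,\sigma}}^2
  \;\simeq\; h_T^{\dtop-2k}\,\Norm{B_\dtop}{\sum_{\sigma\in\Sigma(k)}\mu_\sigma\phi^k_{\widehat T,\sigma}}^2
\end{equation*}
for every family $(\mu_\sigma)_{\sigma\in\Sigma(k)}$ of real numbers. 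Thus it suffices to control the right-hand side uniformly in $\widehat T$.

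Next I would carry out the compactness argument. Let $\mathcal K$ be the set of tuples $(\widehat v_0,\dots,\widehat v_\dtop)\in\overline{B_\dtop}^{\,\dtop+1}$ whose convex hull is a simplex of diameter $1$ and inradius at least $\rho$; this set is closed and bounded, hence compact, and the uniform lower bound on the inradius keeps it away from the degenerate locus, so the barycentric coordinates of the spanned simplex, and hence the Whitney forms $\phi^k_{\widehat T,\sigma}$, depend continuously on the tuple. For a fixed $\widehat T$ the forms $\{\phi^k_{\widehat T,\sigma}\}_{\sigma\in\Sigma(k)}$ are linearly independent (cf.~\eqref{eq:whitney.dualbasis}), and they remain so in $L^2\Lambda^k(B_\dtop)$ since $B_\dtop$ has nonempty interior; hence $(\mu_\sigma)\mapsto\norm{B_\dtop}{\sum_\sigma\mu_\sigma\phi^k_{\widehat T,\sigma}}^2$ is a positive-definite quadratic form. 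Consequently the continuous function $\big(\widehat T,(\mu_\sigma)\big)\mapsto\norm{B_\dtop}{\sum_\sigma\mu_\sigma\phi^k_{\widehat T,\sigma}}^2$ is strictly positive on the compact set $\mathcal K\times\{(\mu_\sigma)\st\sum_\sigma\mu_\sigma^2=1\}$, so it attains a positive minimum $c$ and a finite maximum $C$, both depending only on $\dtop$, $k$ and $\rho$; by homogeneity,
\begin{equation*}
  c\sum_{\sigma\in\Sigma(k)}\mu_\sigma^2
  \;\le\; \Norm{B_\dtop}{\sum_{\sigma\in\Sigma(k)}\mu_\sigma\phi^k_{\widehat T,\sigma}}^2
  \;\le\; C\sum_{\sigma\in\Sigma(k)}\mu_\sigma^2
\end{equation*}
for all $\widehat T\in\mathcal K$ and all $(\mu_\sigma)_{\sigma\in\Sigma(k)}$. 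Taking $(\mu_\sigma)$ supported on a single index gives $\norm{B_\dtop}{\phi^k_{\widehat T,\sigma}}^2\simeq1$, which combined with the rescaling relation proves~\eqref{eq:Whitney.norm}; and, in general,
\begin{equation*}
  \Norm{T}{\sum_{\sigma\in\Sigma(k)}\mu_\sigma\phi^k_{T,\sigma}}^2
  \;\simeq\; h_T^{\dtop-2k}\sum_{\sigma\in\Sigma(k)}\mu_\sigma^2
  \;\simeq\; \sum_{\sigma\in\Sigma(k)}\mu_\sigma^2\,\norm{T}{\phi^k_{T,\sigma}}^2 ,
\end{equation*}
the last equivalence being~\eqref{eq:Whitney.norm}; this is~\eqref{eq:Whitney.topo}.

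The only genuinely non-routine point is this compactness step: one must check that the Whitney forms depend continuously on the vertices — which is where the uniform lower bound $r_{\widehat T}\ge\rho$ is essential, keeping the rescaled simplices away from degeneracy — and that the resulting constants depend only on $\dtop$, $k$ and $\rho$, as demanded by the meaning of $\simeq$ in this paper. Everything else is routine: the affine naturality of Whitney forms, the rescaling identity~\eqref{eq:equiv.P}, and the elementary bound $\norm{T}{\sum_\sigma\mu_\sigma\phi^k_{T,\sigma}}\le\sqrt{\card(\Sigma(k))}\,\sqrt{\sum_\sigma\mu_\sigma^2\,\norm{T}{\phi^k_{T,\sigma}}^2}$, which uses only the triangle and Cauchy--Schwarz inequalities together with $\card(\Sigma(k))=\binom{\dtop+1}{k+1}$ (a number depending only on $\dtop$ and $k$) and would in fact already give the easy ``$\lesssim$'' half of~\eqref{eq:Whitney.topo} without rescaling. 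As an alternative to compactness one could track explicitly the scalings of the integrals $\int_{B_\dtop}\lambda_p\lambda_q$ together with the bound $\norm{B_\dtop}{\DIFF\widehat\lambda_p}\lesssim r_{\widehat T}^{-1}\lesssim\rho^{-1}$, but the compactness route is the classical one for finite-element scaling arguments and is shorter here.
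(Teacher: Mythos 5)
Your argument is correct, but it follows a genuinely different path from the paper's. The paper maps $T$ to the \emph{fixed} reference simplex $S_0$ via the vertex-based affine map $\psi\st x\mapsto Ux+v_0$ (where $U$ has columns $v_i-v_0$), observes that $\psi^\star\phi^k_{T,\sigma}$ is exactly the reference Whitney form $\sum_{p\in[\sigma]}\epsilon(p,\sigma-p)x_p\,\DIFF x^{\sigma-p}$ independently of $T$, invokes finite-dimensional norm equivalence once on that fixed space, and then controls the norm distortion of $\psi$ explicitly through the bounds $\vvvert U^{-1}\vvvert^{-1}\simeq\vvvert U\vvvert\simeq h_T$ supplied by shape regularity. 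You instead reuse the uniform dilation $\psi_T\st x\mapsto x_T+h_Tx$ from Appendix~\ref{app:polynomial.spaces}, under which the pullback $\psi_T^\star\phi^k_{T,\sigma}=\phi^k_{\widehat T,\sigma}$ lands on a \emph{moving} shape-regular reference simplex $\widehat T$, and you restore uniformity of constants by a compactness argument over the set $\mathcal K$ of admissible $\widehat T$. Both are legitimate and standard finite-element scaling techniques; your version has the merit of piggy-backing entirely on the scaling identity \eqref{eq:equiv.P} already proved in the paper, while the paper's is fully constructive (no extremum on a compact set) and avoids having to check continuity of the Whitney forms in the vertices and closedness of $\mathcal K$. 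The points you flag as needing care are indeed the right ones: the continuity of the barycentric coordinates (and hence of the Whitney forms) in the vertices requires the uniform inradius bound $r_{\widehat T}\ge\rho$ to keep the defining linear system nondegenerate, and the closedness of $\mathcal K$ also uses that bound (through the resulting lower bound on the volume) to exclude degenerate limits. One small bookkeeping point: the map $\psi_T$ of Appendix~\ref{app:polynomial.spaces} is defined there only for cells $f\in\Mh$ by reference to the chosen star-centre $x_f$; for a simplex $T$ of $\Ih$ you should say explicitly that you take $x_T$ to be the incentre (whose inscribed ball has radius $r_T\ge\rho h_T$ by mesh regularity), so that the hypotheses behind \eqref{eq:equiv.B}--\eqref{eq:equiv.P} apply verbatim.
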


\begin{proof}
  Let $U \in \Real^{n\times n}$ be the matrix whose $i$-th column is $v_i - v_0$.
  The barycentric coordinates associated with $T$ of a point $x$ are given by the relation
  $x = U\lambda + v_0$.
  Let $\psi \st \Real^n \ni x \mapsto Ux + v_0 \in \Real^n$, so that
  $\psi^{-1} \st \Real^n \ni x \mapsto U^{-1}\left( x - v_0 \right) \in \Real^n$.
  We notice that, for $p\neq 0$, $\lambda_p = \left(\psi^{-1}\right)^\star x_p$ and
  $\DIFF\lambda_p = \left( \psi^{-1} \right)^\star \DIFF x^p$.
  For $p = 0$, these relations still hold defining $x_0$ to satisfy the linear dependency \eqref{eq:bary.constraint}.
  Taking the pullback of \eqref{eq:def.whitney} by $\psi$ and using the second relation, above, we infer%
  \[
  \psi^\star \phi^k_{T,\sigma} = \sum_{p\in[\sigma]} \epsilon(p,\sigma-p)x_p\DIFF x^{\sigma -p},
  \]
  which is the definition of the Whitney forms on the reference $n$-simplex $S_0$.
  The relation \eqref{eq:whitney.dualbasis} shows that the family $\{ \phi^k_{T,\sigma} \}_{\sigma\in\Sigma(k)}$ is linearly independent, since it shows that the integrals over the sub-simplices of $T$ form a dual basis to the Whitney forms.
  Therefore, we infer from the equivalence of norms in finite dimension that,
  for all $\mu_\sigma \in \Real$ and with a hidden constant depending only on $S_0$ (which, in turn, depends only on $n$) and $k$
  \begin{equation}
    \left\|\psi^\star\sum_{\sigma\in\Sigma(k)} \mu_\sigma \phi^k_{T,\sigma}\right\|_{S_0}^2
    \simeq \sum_{\sigma\in\Sigma(k)} \mu_\sigma^2 \norm{S_0}{\psi^\star\phi_{T,\sigma}^k}^2
    \quad\text{ and }\quad
    \norm{S_0}{\psi^\star \phi^k_{T,\sigma}}^2 \simeq 1.
    \label{eq:Whitney.norm.P1}
  \end{equation}
  We now want to compare, for $\mu\in\SPAN\{\phi^k_{T,\sigma}\}_{\sigma\in\Sigma(k)}$, the norms of $\psi^\star\mu$ and $\mu$.
  To this end, we need an estimate of the pointwise norm $\norm{\Alt^k}{(\psi^\star \mu)_x}$, which is equivalent to the operator norm of $(\psi^\star \mu)_x$ viewed as a multilinear form since $\Alt^k$ is a finite-dimensional space.
  In the following, we denote by triple bars these operator norms.
  For any vectors $Y_1,\dots,Y_k$, we have
  \begin{align*}
    \vert (\psi^\star \mu)_{x}(Y_1,\dots,Y_k)\vert
    = \vert(\phi^k_{T,\sigma})_{\psi(x)}(UY_1,\dots,UY_k)\vert
    &\leq \vvvert \mu_{\psi(x)}\vvvert \Vert UY_1\Vert \dots \Vert UY_k\Vert \\
    &\leq \vvvert \mu_{\psi(x)}\vvvert \vvvert U \vvvert^k \Vert Y_1\Vert \dots \Vert Y_k \Vert.
  \end{align*}
  Applying the same argument to $U^{-1}Y_1,\ldots,U^{-1}Y_k$, we infer
  \[
  \vvvert U^{-1}\vvvert^{-k} \vvvert \mu_{\psi(x)}\vvvert \leq \vvvert (\psi^\star \mu)_{x}\vvvert \leq \vvvert U \vvvert^k \vvvert \mu_{\psi(x)}\vvvert.
  \]
  The shape regularity assumption on the simplex $T$ ensures that
  \[
  \vvvert U^{-1} \vvvert^{-1} \simeq \vvvert U \vvvert \simeq h_T.
  \]
  Therefore,
  $\norm{\Alt^k}{(\psi^\star \mu)_x}^2
  \simeq h_T^{2k} \norm{\Alt^k}{\mu_{\psi(x)}}^2$,
  and a change of variable yields
  \begin{equation}
    \begin{aligned}
      \norm{S_0}{\psi^\star\mu}^2
      &= \int_{S_0} \norm{\Alt^k}{(\psi^\star \mu)_x}^2\DIFF x \\
      &\simeq \int_{S_0} h_T^{2k} \norm{\Alt^l}{\mu_{\psi(x)}}^2 h_T^{-\dtop} \left\vert D \psi \right\vert \DIFF x \\
      &= h_T^{2k-\dtop} \int_{T} \norm{\Alt^k}{\mu_x}^2\DIFF x
      = h_T^{2k-\dtop}\norm{T}{\mu}^2.
    \end{aligned}
    \label{eq:Whitney.norm.P2}
  \end{equation}
  Both relations \eqref{eq:Whitney.norm} and \eqref{eq:Whitney.topo} follow by combining \eqref{eq:Whitney.norm.P1} with \eqref{eq:Whitney.norm.P2}.
\end{proof}

\section{Construction of a specific subspace $\cspace{k}(\Ih(f))$}\label{section:algorithm}

In this section, we build the specific subspace $\cspace{k}(\Ih(f)) \subset \cycle{k}(\Ih(f))$ appearing in \eqref{eq:relative.decomposition} by prescribing a suitable spanning set $\mc B_k$ for it.

\begin{algorithm}[H]
  \caption{Algorithm to construct a specific spanning set $\mc B_k$ on a given $d$-cell $f \in \Mh$.}
  \begin{algorithmic}[1]
    \Procedure{ConstructSpanningSet}{$k, f$}
    \State $\mc B_k \coloneqq \emptyset$
    \State $\mc F_k \coloneqq \emptyset$
    \State $\mc V_k \coloneqq \FM{k}(\Ih(\partial f))$ \Comment{ set of visited $k$-simplices}
    \For{$F \in \FM{k}(\Ih(f)) \setminus \FM{k}(\Ih(\partial f))$} \Comment{ loop over $k$-simplices not contained in $\partial f$}
    \If{$\exists z \in \cycle{k}(\Ih(f))$ such that $\supp(z) \subset \mc V_k \cup \{F\}$ and $\inner{F}{z} \neq 0$}
        \State let $z^*$ be the minimal-norm solution $z$ to the system:
               $ \left\{ \begin{array}{l}
                   \bd{k}z=0 \\
                   \inner{F}{z}=1 \\
                   \supp(z) \subset \mc V_k \cup \{F\}
                 \end{array} \right. $
        \State $\mc B_k \gets \mc B_k \cup \{z^*\}$
        \State $\mc F_k \gets \mc F_k \cup \{F\}$
    \Else
        \State $\mc V_k \gets \mc V_k \cup \{F\}$
    \EndIf
    \EndFor
    \State \textbf{return} $(\mc B_k, \mc F_k)$
    \EndProcedure
  \end{algorithmic}
  \label{acyclic_algo}
\end{algorithm}

Algorithm \ref{acyclic_algo} eventually terminates since the set $\Ih(f)$ is finite.
Let $\mc B_k$, $\mc F_k$, and $\mc V_k$ denote the sets obtained upon termination.
Observe that every $k$-simplex $F \in \FM{k}(\Ih(f)) \setminus \FM{k}(\Ih(\partial f))$ is added either to $\mc F_k$ or to $\mc V_k$ by the condition at line 6.
Together with the initialization of $\mc V_k$ at line 4, this yields the partition
\begin{equation}
\label{eq:partition}
\FM{k}(\Ih(f)) = \mc F_k \sqcup \mc V_k.
\end{equation}
For any subset $\mc X \subset \FM{k}(\Ih(f))$, define
\[
\cycle{k}(\mc X) \coloneqq \{\, z \in \cycle{k}(\Ih(f)) \;\st\; \supp(z) \subset \mc X \,\}.
\]
Notice that the ``if'' statement at line~6 of Algorithm~\ref{acyclic_algo} precisely checks whether there exists $z \in \cycle{k}\bigl(\mc V_k \cup \{F\}\bigr)$ such that $\inner{F}{z} \neq 0$.

We now prove the correctness of Algorithm \ref{acyclic_algo} via a sequence of elementary lemmas.
\begin{lemma}[Acyclicity property]
\label{lem:acyclic}
It holds $\cycle{k}(\mc V_k) = \cycle{k}(\Ih(\partial f))$.
\end{lemma}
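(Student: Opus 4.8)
The plan is to prove the two inclusions separately: one is immediate, and the nontrivial one is obtained by an induction on the iterations of the \texttt{for} loop in Algorithm~\ref{acyclic_algo}, the point being that this loop is designed precisely so that $\mc V_k$ never acquires a new cycle.

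The inclusion $\cycle{k}(\Ih(\partial f)) \subseteq \cycle{k}(\mc V_k)$ is clear: by the initialization at line~4 we always have $\FM{k}(\Ih(\partial f)) \subseteq \mc V_k$, and any $z \in \cycle{k}(\Ih(\partial f))$ is in particular a $k$-cycle of $\Ih(f)$ supported in $\FM{k}(\Ih(\partial f)) \subseteq \mc V_k$. (Here I use that the boundary operator on $\Ih(\partial f)$ is the restriction of the one on $\Ih(f)$, so that $\cycle{k}(\Ih(\partial f))$ coincides with $\{z \in \cycle{k}(\Ih(f)) : \supp(z) \subseteq \FM{k}(\Ih(\partial f))\}$; this identification is also what makes the base case of the induction below trivial.)

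For the converse, I would enumerate the $k$-simplices treated by the loop as $F^{(1)}, \dots, F^{(N)}$ in the order the loop visits them, set $\mc V_k^{(0)} \coloneqq \FM{k}(\Ih(\partial f))$, and let $\mc V_k^{(i)}$ denote the value of $\mc V_k$ just after $F^{(i)}$ has been processed, so that $\mc V_k = \mc V_k^{(N)}$ at termination. Then I would show by induction on $i$ that $\cycle{k}(\mc V_k^{(i)}) = \cycle{k}(\Ih(\partial f))$. The case $i=0$ is the identification recalled above. For the step, if $F^{(i)}$ is routed to $\mc F_k$ by lines~8--9 then $\mc V_k^{(i)} = \mc V_k^{(i-1)}$ and there is nothing to do; if instead $F^{(i)}$ is added to $\mc V_k$ at line~11, then the test at line~6 failed when $F^{(i)}$ was examined, i.e.\ there is no $z \in \cycle{k}(\Ih(f))$ with $\supp(z) \subseteq \mc V_k^{(i-1)} \cup \{F^{(i)}\}$ and $\inner{F^{(i)}}{z} \neq 0$. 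Consequently every $z \in \cycle{k}\bigl(\mc V_k^{(i-1)} \cup \{F^{(i)}\}\bigr) = \cycle{k}(\mc V_k^{(i)})$ satisfies $\inner{F^{(i)}}{z} = 0$, hence $F^{(i)} \notin \supp(z)$ and $\supp(z) \subseteq \mc V_k^{(i-1)}$; this gives $\cycle{k}(\mc V_k^{(i)}) \subseteq \cycle{k}(\mc V_k^{(i-1)})$, the reverse inclusion being obvious, and the induction hypothesis closes the step. Taking $i = N$ yields $\cycle{k}(\mc V_k) \subseteq \cycle{k}(\Ih(\partial f))$.

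I do not expect a genuine obstacle here: the argument is just the loop invariant "$\mc V_k$ never gains a cycle", enforced by the line~6 test. The only points needing a sentence of care in the writeup are bookkeeping ones: that the value of $\mc V_k$ at the instant $F^{(i)}$ is tested is exactly $\mc V_k^{(i-1)}$ (true since $\mc V_k$ is modified only at line~11, and not when a simplex is placed in $\mc F_k$ or $\mc B_k$), and that $F^{(i)} \notin \mc V_k^{(i-1)}$ (true since the loop visits each $k$-simplex outside $\partial f$ exactly once, while the simplices of $\partial f$ make up $\mc V_k^{(0)}$), so that $\inner{F^{(i)}}{z}=0$ really does force $\supp(z)\subseteq\mc V_k^{(i-1)}$.
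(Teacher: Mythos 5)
Your proof is correct and takes essentially the same approach as the paper's: both establish the loop invariant that $\mc V_k$ never acquires a new $k$-cycle, driven by the failure of the line-6 test whenever a simplex is routed to $\mc V_k$. Your version just spells out the iteration bookkeeping (the sets $\mc V_k^{(i)}$) more explicitly than the paper's brief "holds by induction until termination."
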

\begin{proof}
The set $\mc V_k$ is initialized as $\FM{k}(\Ih(\partial f))$.
Then, at each iteration of the for loop in line 5, a $k$-simplex $F$ is added to $\mc V_k$ precisely when every $z \in \cycle{k}(\mc V_k \cup \{F\})$ satisfies $\inner{F}{z} = 0$.
This condition implies $\supp(z) \subset \mc V_k$, and consequently $\cycle{k}(\mc V_k \cup \{F\}) = \cycle{k}(\mc V_k)$.
This property holds by induction until termination.
\end{proof}

Let $\mc F_k = \{F_1, \dots, F_p\}$ be an enumeration of the set $\mc F_k$ based on the order in which its elements are added during the execution of Algorithm \ref{acyclic_algo}, and let $\mc B_k = \{z_1, \dots, z_p\}$ be a corresponding enumeration of the set $\mc B_k$ such that each $z_i \in \cycle{k}(\mc V_k \cup \{F_i\})$ satisfies $\inner{F_i}{z_i}=1$ for $i \in \{1, \dots, p\}$.
\begin{lemma}[Duality property]
  \label{lem:duality}
  The following duality property holds:
  \begin{align}
    \inner{F_i}{z_j} = \delta_{ij} \qquad \forall i,j \in \{1, \dots, p\}.
    \label{eq:duality}
  \end{align}
\end{lemma}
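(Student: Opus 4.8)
**The plan is to prove the duality property \eqref{eq:duality} by analyzing the order in which the $k$-simplices $F_1, \dots, F_p$ are added to $\mathcal{B}_k$ by Algorithm~\ref{acyclic_algo}, exploiting the key structural fact that when $z_i$ is constructed, its support is contained in $\mathcal{V}_k^{(i-1)} \cup \{F_i\}$, where $\mathcal{V}_k^{(i-1)}$ denotes the set $\mathcal{V}_k$ at the moment $F_i$ is processed — and, crucially, this set contains none of the simplices $F_{i}, F_{i+1}, \dots, F_p$.**

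First I would fix the enumeration $\mathcal{F}_k = \{F_1, \dots, F_p\}$ in the order the elements are added, and similarly $\mathcal{B}_k = \{z_1, \dots, z_p\}$ with $z_i \in Z_k(\mathcal{V}_k^{(i-1)} \cup \{F_i\})$ and $\inner{F_i}{z_i} = 1$ by construction (line~7 of the algorithm). The case $i = j$ is then immediate: $\inner{F_i}{z_i} = 1 = \delta_{ii}$ by definition of $z_i$. For the off-diagonal cases I would split into $j > i$ and $j < i$.

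For the case $j > i$: when $z_j$ is constructed, we have $\supp(z_j) \subset \mathcal{V}_k^{(j-1)} \cup \{F_j\}$. Since $\mathcal{F}_k$ and $\mathcal{V}_k$ remain disjoint throughout the execution (each simplex outside $\partial f$ is added to exactly one of them by the \texttt{if}/\texttt{else} at line~6, and the initialization at line~4 puts only boundary simplices into $\mathcal{V}_k$, which by \eqref{eq:prop.Fi}/the defining property $F_i \not\subset \partial f$ are never equal to any $F_i$), the simplex $F_i$ with $i < j$ — having already been added to $\mathcal{F}_k$ — is never in $\mathcal{V}_k^{(j-1)}$, and $F_i \neq F_j$. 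Hence $F_i \notin \supp(z_j)$, which gives $\inner{F_i}{z_j} = 0$.

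For the case $j < i$: here $z_j$ was built \emph{earlier} than $F_i$ was processed, so $\supp(z_j) \subset \mathcal{V}_k^{(j-1)} \cup \{F_j\} \subset \mathcal{V}_k^{(i-1)}$ (the set $\mathcal{V}_k$ only grows over time, and $F_j$ is itself eventually — actually immediately, as it goes into $\mathcal{F}_k$ not $\mathcal{V}_k$; so I should be slightly careful: $F_j$ is in $\mathcal{F}_k$, not $\mathcal{V}_k$). Let me restate: $\supp(z_j) \subset \mathcal{V}_k^{(j-1)} \cup \{F_j\}$, and since $i > j$ we have $F_i \neq F_j$, while $F_i \notin \mathcal{V}_k^{(j-1)}$ because at stage $j-1$ the simplex $F_i$ has not yet been processed at all (it is processed at a strictly later stage), so it cannot have been placed in $\mathcal{V}_k$ nor is it a boundary simplex. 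Therefore again $F_i \notin \supp(z_j)$ and $\inner{F_i}{z_j} = 0$. Combining the three cases yields \eqref{eq:duality}. The main obstacle — really the only subtlety — is bookkeeping the time-indexed sets $\mathcal{V}_k^{(m)}$ carefully and invoking the permanent disjointness of $\mathcal{F}_k$ and $\mathcal{V}_k$ together with the fact that every $F_i$ lies outside $\partial f$ (so is never among the initial boundary simplices in $\mathcal{V}_k$); once that invariant is stated cleanly, both off-diagonal cases reduce to "$F_i$ is not in the support set that $z_j$ was confined to."
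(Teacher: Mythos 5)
Your proof is correct and uses the same core idea as the paper: $F_i \notin \supp(z_j)$ because $\supp(z_j) \subset \mc V_k \cup \{F_j\}$ while $F_i \notin \mc V_k$ and $F_i \neq F_j$. The paper simply avoids your case split on $j>i$ versus $j<i$ by invoking the final partition \eqref{eq:partition} (which already encodes that $\mc F_k$ and $\mc V_k$ are disjoint at termination, and $\mc V_k^{(j-1)} \subset \mc V_k$ since the visited set only grows), but your time-indexed bookkeeping establishes the same facts and is equally valid.
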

\begin{proof}
The case $i = j$ holds by choice of $z$ in Line 7 of the algorithm.
For $i \neq j$, we have $\supp(z_j) \subset \mc V_k \cup \{F_j\}$, and by \eqref{eq:partition}, $F_i \notin \mc V_k$.
Since we also have $F_i \neq F_j$, it follows that $F_i \notin \supp(z_j)$.
Thus, $\inner{F_i}{z_j} = 0$.
\end{proof}

\begin{lemma}[Complementary subspace basis property]
\label{lem:basis}
Define
\begin{equation*}
\cspace{k}(\Ih(f)) \coloneqq \mathrm{span}\,\mc B_k \subset \cycle{k}(\Ih(f)).
\end{equation*}
Then, $\mc B_k$ is a basis of $\cspace{k}(\Ih(f))$ and the following decomposition holds
\[
\cycle{k}(\Ih(f)) = \cycle{k}(\Ih(\partial f)) \oplus \cspace{k}(\Ih(f)).
\]
\end{lemma}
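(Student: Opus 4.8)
The plan is to prove both assertions—that $\mc B_k$ is a basis of $\cspace{k}(\Ih(f))$ and that the decomposition $\cycle{k}(\Ih(f)) = \cycle{k}(\Ih(\partial f)) \oplus \cspace{k}(\Ih(f))$ holds—by elementary linear algebra, leveraging only the acyclicity property (Lemma~\ref{lem:acyclic}), the duality property (Lemma~\ref{lem:duality}), and the partition $\FM{k}(\Ih(f)) = \mc F_k \sqcup \mc V_k$ from \eqref{eq:partition}.

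First I would establish that $\mc B_k = \{z_1,\dots,z_p\}$ is linearly independent, which immediately makes it a basis of its span $\cspace{k}(\Ih(f))$. Indeed, if $\sum_{i=1}^p c_i z_i = 0$, then pairing with $F_j$ and invoking the duality relation \eqref{eq:duality} gives $c_j = \inner{F_j}{\sum_i c_i z_i} = 0$ for every $j \in \{1,\dots,p\}$.

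Next, for the direct sum I would treat the two halves separately. For triviality of the intersection: if $z \in \cycle{k}(\Ih(\partial f)) \cap \cspace{k}(\Ih(f))$, write $z = \sum_i c_i z_i$; since $\supp(z) \subset \FM{k}(\Ih(\partial f)) \subset \mc V_k$ while each $F_i \in \mc F_k$ is disjoint from $\mc V_k$ by \eqref{eq:partition}, we get $\inner{F_i}{z} = 0$, which by \eqref{eq:duality} equals $c_i$, hence $z = 0$. For the sum: given an arbitrary $z \in \cycle{k}(\Ih(f))$ with coordinates $(z_F)_{F\in\FM{k}(\Ih(f))}$, set $z'' := \sum_{i=1}^p z_{F_i} z_i \in \cspace{k}(\Ih(f))$ and $z' := z - z''$; a one-line computation using \eqref{eq:duality} gives $\inner{F_j}{z'} = z_{F_j} - z_{F_j} = 0$ for every $j$, so $\supp(z') \subset \FM{k}(\Ih(f)) \setminus \mc F_k = \mc V_k$, and since $z'$ is a cycle, $z' \in \cycle{k}(\mc V_k) = \cycle{k}(\Ih(\partial f))$ by Lemma~\ref{lem:acyclic}. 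Thus $z = z' + z''$ is the required splitting.

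I do not expect a genuine obstacle here: the argument is routine once Lemmas~\ref{lem:acyclic} and~\ref{lem:duality} are in hand. The only points deserving care are the identification $\cycle{k}(\Ih(\partial f)) = \cycle{k}(\mc V_k)$ supplied by Lemma~\ref{lem:acyclic} (so that a boundary cycle and an ambient cycle supported in $\mc V_k$ are the same object), and the systematic use of \eqref{eq:partition} to guarantee $F_i \notin \mc V_k$. The degenerate case $p = 0$ (where $\mc B_k = \emptyset$, $\cspace{k}(\Ih(f)) = \{0\}$, and $\mc V_k = \FM{k}(\Ih(f))$) is subsumed, since then Lemma~\ref{lem:acyclic} directly yields $\cycle{k}(\Ih(f)) = \cycle{k}(\Ih(\partial f))$.
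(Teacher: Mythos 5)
Your argument is correct and is essentially identical to the paper's proof: linear independence from the duality relation~\eqref{eq:duality}, triviality of the intersection by pairing against the $F_i$, and the splitting $z = z' + z''$ with $z'' := \sum_i \inner{F_i}{z}\,z_i$ (your $z_{F_i}$ is exactly $\inner{F_i}{z}$) followed by Lemma~\ref{lem:acyclic} to place the remainder in $\cycle{k}(\Ih(\partial f))$. The only cosmetic differences are the swapped roles of the prime/double-prime notation and your explicit mention of the degenerate case $p=0$, which the paper omits.
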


\begin{proof}
By Lemma \ref{lem:duality}, the set $\mc B_k$ is linearly independent and hence it forms a basis for $\cspace{k}(\Ih(f))$.

To prove the direct sum, we first establish that the intersection is trivial.
Let $z \in \cycle{k}(\Ih(\partial f)) \cap \cspace{k}(\Ih(f))$.
Since $z \in \cspace{k}(\Ih(f))$, $z = \sum_{j=1}^p c_j z_j$ for some coefficients $c_j$.
As $z \in \cycle{k}(\Ih(\partial f))$, its support is disjoint from $\mc F_k$ (which is made of simplices that are internal to $f$), and thus $\inner{F_i}{z} = 0$ for all $i \in \{1, \dots, p\}$.
By \eqref{eq:duality}, we infer that 
\[
0 = \inner{F_i}{z} = \inner{F_i}{\sum_{j=1}^p c_j z_j} = \sum_{j=1}^p c_j\, \delta_{ij} = c_i.
\]
Hence $z=0$, which proves that $\cycle{k}(\Ih(\partial f)) \cap \cspace{k}(\Ih(f)) = \{0\}$.

Second, let $z \in \cycle{k}(\Ih(f))$.
Define
\[
z' \coloneqq \sum_{j=1}^{p} \inner{F_j}{z}\, z_j \in \cspace{k}(\Ih(f))
\]
and set $z'' \coloneqq z - z'$.
A direct computation using \eqref{eq:duality} shows that, for any $i \in \{1, \dots, p\}$,
\[
\inner{F_i}{z''}
= \inner{F_i}{z} - \inner{F_i}{\sum_{j=1}^p \inner{F_j}{z}\,z_j}
= \inner{F_i}{z} - \sum_{j=1}^{p} \inner{F_j}{z} \,\delta_{ij}=\inner{F_i}{z}-\inner{F_i}{z}=0.
\]
This shows that $\supp(z'') \cap \mc F_k = \emptyset$ and thus, by \eqref{eq:partition}, that $\supp(z'') \subset \mc V_k$.
Therefore, by Lemma \ref{lem:acyclic}, $z'' \in \cycle{k}(\Ih(\partial f))$.
We have thus shown that any $z\in\cycle{k}(\Ih(f))$ can be written as $z = z'+ z''$ with $z' \in \cspace{k}(\Ih(f))$ and $z'' \in \cycle{k}(\Ih(\partial f))$. 
\end{proof}

Finally, we address the problem of bounding the coefficients of solutions to linear systems involving the boundary operators $\bd{k}: \chain{k}(\Ih(f)) \to \chain{k-1}(\Ih(f))$ for $k \in \{1, \dots, d\}$.
This will ensure that the selected elements of $\mc B_k$, and their pre-image by the boundary operator, have uniformly bounded $2$-norms (see \eqref{eq:bound.zi} and \eqref{eq:bound.wi}).

Let $\mat B_{k}$ be the matrix representation of $\bd{k}$.
In the case $k=1$, the boundary matrix $\mat B_1$ coincides with the graph incidence matrix (between $1$-cells and $0$-cells) of the triangulation $\Ih(f)$.
This matrix is known to be \emph{Totally Unimodular} (TU), meaning every sub-determinant is $0$, $+1$, or $-1$; see, e.g., \cite[Chap.~19]{schrijver1998theory}.
This TU property guarantees that the cycles $z \in \mc B_1$ selected by Algorithm \ref{acyclic_algo} (which are solutions to a TU system) have coefficients exclusively in $\{-1, 0, 1\}$.

In contrast, this TU property of $\mat B_k$ fails for $k \geq 2$ \cite[Chap.~20-21]{schrijver1998theory}.
We must therefore rely on a more general principle to bound the solutions, which is the goal of our next lemma.
This principle is based on Cramer's rule, combined with the fact that the boundary matrices $\mat B_k$ still have integer entries (specifically, in $\{-1, 0, 1\}$).
This general result allows us to establish two key estimates:
\begin{itemize}
\item First, it provides a uniform bound on the coefficients of each $k$-cycle $z \in \mc B_k$. As specified at line 7 in Algorithm \ref{acyclic_algo}, each such $z$ is selected as the unique minimal-norm solution to the well-posed linear system defined at that step.
\item Second, for $k \in \{1, \dots, d-1\}$, it allows us to bound the coefficients of a $(k+1)$-chain $w$ given a bound on its boundary $z = \bd{k+1}w$.
\end{itemize}
We state and prove this second result explicitly, and the first result follows from a similar argument.

\begin{lemma}
\label{lem:coeff.estimate}
Let $k \in \{1, \dots, d-1\}$, and let $z=\left(z_F\right)_{F \in \FM{k}(\Ih(f))}\in\cycle{k}(\Ih(f))$ be $k$-cycle.
Then, there exists a $(k+1)$-chain $w=\left(w_{F'}\right)_{F' \in \FM{k+1}(\Ih(f))} \in\chain{k+1}(\Ih(f))$ such that $\bd{k+1}w=z$ and, for some constant $C$ depending only on $\card(\FM{k}(\Ih(f)))$,
\begin{equation}\label{eq:bound.w.z}
\sum_{F' \in \FM{k+1}(\Ih(f))} w_{F'}^2 \,\leq\, C \sum_{F \in \FM{k}(\Ih(f))} z_F^2.
\end{equation}
\end{lemma}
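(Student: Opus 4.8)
The statement concerns bounding the coefficients of a chain $w$ that is a pre-image of a cycle $z$ under the boundary operator $\bd{k+1}$. The plan is to reduce the problem to a well-posed linear system and then apply Cramer's rule together with the integrality of the boundary matrices. First I would note that, since $f$ is homeomorphic to a closed $d$-ball and $k \in \{1,\dots,d-1\}$, the $k$-th homology of $\Ih(f)$ is trivial, so $\cycle{k}(\Ih(f)) = \boundary{k}(\Ih(f)) = \Image \bd{k+1}$; hence at least one pre-image $w$ exists. To obtain one with controlled coefficients, I would select $w$ as the \emph{minimal-(Euclidean-)norm} solution of the linear system $\mat B_{k+1} w = z$, i.e. $w = \mat B_{k+1}^{+} z$ where $\mat B_{k+1}^{+}$ denotes the Moore--Penrose pseudoinverse. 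This immediately gives $\|w\|_2 \le \opn{2}{\mat B_{k+1}^{+}} \, \|z\|_2$, so it remains to bound the spectral norm of $\mat B_{k+1}^{+}$ by a constant depending only on $\card(\FM{k}(\Ih(f)))$ (equivalently, only on the mesh regularity parameter, since mesh regularity bounds this cardinality uniformly).

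The key step is the bound on $\opn{2}{\mat B_{k+1}^{+}}$. The nonzero singular values of $\mat B_{k+1}$ are the square roots of the nonzero eigenvalues of $\mat B_{k+1}^{\mathsf T}\mat B_{k+1}$ (a symmetric positive semidefinite integer matrix), so $\opn{2}{\mat B_{k+1}^{+}}$ equals the reciprocal of the smallest nonzero singular value $\sigma_{\min}$ of $\mat B_{k+1}$. To bound $\sigma_{\min}$ from below, I would argue via Cramer's rule: restricting $\mat B_{k+1}$ to a full-rank square submatrix (of size equal to $\DIM \boundary{k}(\Ih(f))$), the corresponding minor is a nonzero integer, hence has absolute value $\ge 1$; then the entries of the inverse of that submatrix are, by Cramer's rule, ratios of integer minors of $\mat B_{k+1}$, each bounded in absolute value by the Hadamard bound. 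Since every entry of $\mat B_{k+1}$ lies in $\{-1,0,1\}$ and $\mat B_{k+1}$ has at most $N \coloneqq \card(\FM{k}(\Ih(f)))$ rows (and a comparable number of columns, again uniformly bounded by mesh regularity), every minor of order $m$ is bounded by $m^{m/2} \le N^{N/2}$ by Hadamard's inequality. This produces a bound on all relevant sub-inverses, hence on $1/\sigma_{\min}$, depending only on $N$. Taking $C \coloneqq \opn{2}{\mat B_{k+1}^{+}}^{2}$, which depends only on $N = \card(\FM{k}(\Ih(f)))$, yields \eqref{eq:bound.w.z}. (In fact, since a pseudoinverse is used, one also checks that $\mat B_{k+1}^{+} z$ is supported in the row space, but as we only need the norm bound this is not essential; alternatively, one selects the basic solution supported on the columns of a fixed full-rank square submatrix, to which Cramer's rule applies verbatim.)

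The main obstacle I anticipate is precisely making the constant depend \emph{only} on $\card(\FM{k}(\Ih(f)))$ and not on any finer combinatorial data of the simplicial subcomplex: a priori the relevant minors and the rank could vary with the particular triangulation, but the crucial observations are (i) the entries are in $\{-1,0,1\}$, so Hadamard's inequality gives a bound purely in terms of the matrix dimensions, and (ii) the matrix dimensions are themselves controlled by $\card(\FM{k}(\Ih(f)))$ together with $\card(\FM{k+1}(\Ih(f)))$, and the latter is in turn bounded by a function of the former (and of $d$) by mesh regularity — indeed, each $(k+1)$-simplex has $\binom{k+2}{k+1}=k+2$ faces of dimension $k$, and in a regular mesh the number of $(k+1)$-simplices sharing a fixed $k$-face is uniformly bounded. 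A secondary, minor, point is verifying that $z$ is genuinely in the image of $\bd{k+1}$ so that the system is consistent; this is exactly the triviality of $H_k(\Ih(f))$ recalled above, which follows from $f$ being homeomorphic to a closed $d$-ball (already invoked, e.g., in \eqref{eq:homology.cases}). The proof of the companion bound \eqref{eq:bound.zi} for the cycles $z_i \in \mc B_k$ selected at line~7 of Algorithm~\ref{acyclic_algo} follows the same template: those $z_i$ solve the well-posed system $\bd{k}z = 0$, $\inner{F_i}{z}=1$, $\supp(z)\subset \mc V_k \cup \{F_i\}$, whose matrix is again integer-valued with entries in $\{-1,0,1\}$ and of uniformly bounded size, so Cramer's rule and Hadamard's inequality apply identically.
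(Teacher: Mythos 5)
Your proposal is built on the same core ingredients as the paper's proof: integrality of the boundary matrix $\mat B_{k+1}$ (entries in $\{-1,0,1\}$), consistency of the system from triviality of $H_k(\Ih(f))$, Cramer's rule applied to a full-rank square subsystem, and a Hadamard-type bound on the minors. Your stated \emph{alternative} — select the basic solution supported on the columns of a fixed invertible $r\times r$ submatrix $\mat M$, apply Cramer's rule, and use $|\det \mat M|\ge 1$ — is exactly the paper's argument.

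Your \emph{primary} route, however, has an unjustified step. You choose the Moore--Penrose pseudoinverse $w=\mat B_{k+1}^{+}z$ and then claim that bounding the entries of $\mat M^{-1}$ (for a full-rank $r\times r$ submatrix $\mat M$) ``produces a bound on $1/\sigma_{\min}$.'' That implication is not automatic: bounding a sub-inverse does not directly bound the spectral norm of the pseudoinverse of the full matrix. A lower bound on the smallest nonzero singular value of an integer matrix does hold (e.g.\ via the compound-matrix identity $\prod_{i=1}^r\sigma_i^2 = \sum_{I,J}\det(\mat B_{I,J})^2 \ge 1$ together with $\sigma_1 \lesssim_N 1$), but that argument is not what you wrote. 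The cleanest fix is simply to note that the pseudoinverse solution is the minimal-norm solution, hence its $2$-norm is at most that of the basic solution obtained by Cramer's rule, which your alternative (and the paper) already bounds; then you can dispense with $\sigma_{\min}$ entirely. With that correction the argument is complete and coincides with the paper's.
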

\begin{proof}
Let $\mat B_{k+1}$ be the matrix representation of $\bd{k+1}$.
We identify the chains $w$ and $z$ with their column vectors of coefficients, $w = (w_{F'})_{F' \in \FM{k+1}(\Ih(f))}$ and $z = (z_F)_{F \in \FM{k}(\Ih(f))}$, as defined in the statement of the lemma.
We are thus solving the system $\mat B_{k+1} w = z$.
Since $f$ has a trivial topology and $z$ is a $k$-cycle with $k\ge 1$, we know that this system has at least one solution (there is a $w\in\chain{k+1}(\Ih(f))$ such that $\bd{k+1}w=z$).

Let $r = \text{rank}(\mat B_{k+1})$.
We can find a particular solution $w^*$ by solving an $r \times r$ invertible subsystem $\mat M w^*=z^*$, where $\mat M$ is a submatrix of $\mat B_{k+1}$, and $w^*$, $z^*$ are sub-vectors of $w$, $z$.
A solution to $\mat B_{k+1} w=z$ is then obtained by putting all components of $w$, other than those in $w^*$, to zero.

By Cramer's rule, the $F$-th component of the solution is $(w^*)_F = \det(\mat M_F) / \det(\mat M)$, where $\mat M_F$ is the matrix obtained by replacing the $F$-th column of $\mat M$ by $z^*$.
The entries of $\mat M$ are in $\{-1, 0, 1\}$. Since $\mat M$ is invertible and integer-valued, its determinant is a non-zero integer, thus $|\det(\mat M)| \ge 1$.

One of the columns of $\mat M_F$ is $z^*$, and the others $r-1$ columns are vectors of size $r$ with coefficients in $\{-1,0,1\}$, that therefore have a $2$-norm $\le \sqrt{r}$. Since the determinant is continuous (with norm 1) for the $2$-norm of vectors, we infer that $|\det(\mat M_F)|\le r^{(r-1)/2}\norm{2}{z^*}$.
Hence, $|(w^*)_F|\le r^{(r-1)/2}\norm{2}{z^*}$ and thus $\norm{2}{w^*}\le r^{r/2}\norm{2}{z^*}$. The same bound is therefore satisfied by $w$ obtained from $w^*$ by padding with zeros.
Since $r\le \card(\FM{k}(\Ih(f)))$, this concludes the proof of \eqref{eq:bound.w.z}.
\end{proof}


\printbibliography

\end{document}